\documentclass[final]{article}
\usepackage[authoryear,round]{natbib}

\usepackage[utf8]{inputenc}
\usepackage[english]{babel}
\usepackage{csquotes}

\usepackage{graphicx}
\usepackage{subcaption}
\newsavebox{\imagebox}
\usepackage{setspace}

\usepackage{amsmath}
\usepackage{amssymb}
\usepackage{amsthm}
\usepackage{mathtools}
\allowdisplaybreaks
\usepackage{bm}    
\usepackage{bbm}        

\usepackage{changepage}
\usepackage{adjustbox}
\usepackage{tcolorbox}
\usepackage{enumitem}
\usepackage{array}
\usepackage{natbib}
\usepackage{tikz}
\usetikzlibrary{shapes.geometric}

\usepackage{svg}
\usepackage{svg-extract}

\usepackage[appendix=append,bibliography=common]{apxproof}

\usepackage{hyperref}

\newtheoremrep{theorem}{Theorem}[section]
\newtheoremrep{lemma}{Lemma}[section]
\newtheoremrep{corollary}{Corollary}[section]
\newtheoremrep{proposition}{Proposition}[section]
\theoremstyle{definition}
\newtheorem{definition}{Definition}[section]
\newtheorem{notation}{Notation}[section]
\newtheorem{example}{Example}[section]
\newtheorem{remark}{Remark}[section]

\newcommand{\norm}[1]{\left\lVert #1 \right\rVert}
\newcommand{\round}[1]{\left( #1 \right)}
\renewcommand{\square}[1]{\left[ #1\right]}
\newcommand{\curly}[1]{\left\{ #1 \right\}}
\newcommand{\angled}[1]{\left\langle #1 \right\rangle}
\newcommand{\abs}[1]{{\left| #1 \right|}}

\DeclareMathOperator{\diag}{diag}
\DeclareMathOperator{\Var}{\mathbb{V}ar}
\DeclareMathOperator{\vspan}{span}
\DeclareMathOperator{\rank}{rank}

\DeclareMathOperator{\Sch}{\mathcal{S}ch}

\newcommand{\N}[0]{\mathbb{N}}
\newcommand{\Z}[0]{\mathbb{Z}}
\newcommand{\R}[0]{\mathbb{R}}
\renewcommand{\P}{\mathbb{P}}

\newcommand{\ul}[1]{\underline{#1}}
\newcommand{\ol}[1]{\overline{#1}}

\newcommand{\onen}[0]{{\mathbf{1}_{n}}}

\newcommand{\eqDistr}[0]{\overset{\mathcal{D}}{=}}
\newcommand{\ie}{\textit{i.e. }}
\newcommand{\eg}{\textit{e.g. }}
\newcommand{\dVec}[1]{{\bm{#1}}}
\newcommand{\matr}[1]{{\mathbf{#1}}}
\newcommand{\rVec}[1]{{\bm{#1}}}
\newcommand{\integ}[4]{\int_{#1}^{#2}{#3}\,\mathrm{d}{#4}}

\newcommand{\Ind}{\perp\!\!\!\perp}

\newcommand{\Norm}[2]{\mathcal{N}\round{#1,\,#2}}
\newcommand{\NormMV}[3]{\mathcal{N}_{#1}\round{#2,\,#3}}
\newcommand{\Exp}[1]{\mathbb{E}\round{#1}}

\newcommand{\Cov}[2]{\mathbb{C}\text{ov}\round{#1,#2}}

\newcommand{\simiid}{\stackrel{\mathclap{\small\mbox{iid}}}{\sim}}

\usepackage[loadshadowlibrary,textsize=small]{todonotes}
\todostyle{tobia}{color=teal!50, shadow, size=\tiny, author=Tobia}

\todostyle{emilio}{color=orange!50, shadow, size=\tiny, author=Emilio}

\todostyle{claudio}{color=yellow!50, shadow, size=\tiny, author=Claudio}

\todostyle{mohammed}{color=green!50, shadow, size=\tiny, author=Mohammed}

\title{Smooth Isotropic Covariance Functions on Metric Graphs via Polyharmonic Resistance Distances}
\author{Tobia Filosi, Emilio Porcu and Claudio Agostinelli}
\date{September 2026}

\begin{document}

\maketitle

\begin{abstract}
    Metric graphs are generalisations of linear networks and provide a natural framework for the definition of continuously-indexed Gaussian processes. We define a new class of distances on these topologies, termed polyharmonic distances, which unify and extend the spectral construction underlying the effective resistance distance and the biharmonic one. We give both a spectral and a variational characterisation. Furthermore, we show an explicit class of stochastic processes whose variograms coincide with the squared polyharmonic distances. Finally, we show how these metrics can be composed with suitable completely monotonic functions to define isotropic processes having any prescribed finite-order mean-square differentiability along the edges and satisfying the Kirchhoff conditions up to order one at the vertices.
\end{abstract}


\begin{toappendix}
\section{Explicit construction of \texorpdfstring{$Z$}{Z}}
    \label{app:explicit_construction_Z}
    Here we provide all the definitions and calculations necessary to the construction of the process $Z$. Such a process, following \cite{AnderesEtAl_IsotropicCovarianceFunctions_2020}, will be defined on the whole graph and will be the sum of the two independent processes $Z_\mu$ and $Z_E$, being the former a $C^m$ polynomial on $G$, and the latter an additional noise term. The construction is quite involved and requires several technical definitions. However, we will try to give the \emph{rationale} that guides the whole construction. 
    \begin{itemize}
        \item The process $Z_\mu$ will be a random polynomial on $G$, meaning that $Z_\mu$ on each edge $e$ will be a polynomial, and that $Z_\mu$ will satisfy the usual Kirchhoff conditions at the vertices. Now, in analogy with \cite{AnderesEtAl_IsotropicCovarianceFunctions_2020}, we define $Z_\mu$ (and its derivatives) on $V$ and then extend (\ie interpolate) it on each edge $e\in E$. The choice of the covariance matrix of $Z_\mu$ will be guided and motivated by the dot product of the RKHS (\ref{eq:dot_product_RKHS}).
        \item The process $Z_E$ will be defined, independently on each edge $e\in E$, as the $m$-times integrated Wiener process conditioned to have the value and all the $m$ derivatives null at the endpoints $\ul e, \ol e$ of $e$. This will ensure that all the Kirchhoff conditions will trivially hold for $Z_E$. 
    \end{itemize}

    \subsection{Preliminary material}
        In this rather technical subsection, we introduce the necessary linear-algebra objects necessary to the explicit construction of the process $Z_\mu$. Table \ref{tab:objectsSummary} also presents a summary of the main objects involved, their dimensions and roles.
        \begin{table}[t]
            \centering
            \begin{tabular}{|c|ccc|>{\footnotesize}p{4.2cm}|}
                \hline
                \textbf{Object} & \textbf{rows} & \textbf{columns} & \textbf{defined} & \normalsize{\textbf{role}}\\
                \hline
                $\matr B_e$ & $2m+2$ & $2m+2$ & \eqref{eq:def_matrix_B} & Gets the derivatives at the endpoints ($0$ and $l$) of the polynomial with given coefficients. \\
                ${\matr H}_e$ & $2m+2$ & $d_{\dVec z}$ & Notation \ref{not:matrix_H_e} & Extracts from $\dVec z$ the information relative to the edge $e$, changing signs to the odd derivatives at $\ol e$.\\
                $\matr M_e$ & $2m+2$ & $2m+2$ & \eqref{eq:matrix_M_definition} & Computes the scalar product of the two interpolation polynomials on $e$ passed by endpoints properties. \\
                $\matr L$ & $d_{\dVec z}$ & $d_{\dVec z}$ &  \eqref{eq:laplacian_definition} & The generalised Laplacian matrix. \\
                \hline
                $\dVec x$ & $2m+2$ & $1$ & \eqref{eq:polynomial_vector_expression_variables} & Stores the first $2m+2$ integer powers of $x$, starting from the $0$\textsuperscript{th}. \\
                $\dVec j$ & $d_{\dVec z}$ & $1$ & Proposition \ref{prop:matrix_L_Sym_PD_rankL} & Stores 1 on the 0-order derivative positions. \\
                \hline
                $n$ & 1 & 1 & & The number of vertices of $G$.\\
                $q$ & 1 & 1 & & The number of edges of $G$.\\
                $m$ & 1 & 1 & & The differentiability order.\\
                \hline
            \end{tabular}
            \caption{Some objects used in this manuscript, their dimensions and roles.}
            \label{tab:objectsSummary}
        \end{table}
        \begin{definition}[$\matr B$ matrix]
            \label{def:definition_B}
            Let $[0,l] \subset \R$ and let $p:[0,l]\to\R$ be a polynomial of degree $2m+1$. We define the vectors $\dVec a, \dVec x \in \R^{2m+2}$:
            \begin{equation}
                \label{eq:polynomial_vector_expression_variables}
                \dVec a:= \begin{bmatrix}a_0, \dots, a_{2m+1}\end{bmatrix}^\top, \qquad 
                \dVec x := \begin{bmatrix}1,x, \dots, x^{2m+1}\end{bmatrix}^\top.
            \end{equation}
            These allow to write $p(x) = \dVec a^\top \dVec x$. Now, let $\dVec z_0, \dVec z_l \in \R^{m+1}$, representing the derivatives (from order $0$ to $m$) at $0$ and $l$ respectively. Finally set $\dVec z := \begin{bmatrix}
                \dVec z_0^\top & \dVec z_l^\top
            \end{bmatrix}^\top \in \R^{2m+2}$.
            We define $\matr B \in \R^{(2m+2) \times (2m+2)}$ as the matrix that satisfies the relation
            \begin{equation}
                \label{eq:def_matrix_B}
                \matr B \dVec a = \dVec z.
            \end{equation}
             We stress that $\matr B$ depends on $l$: in the following we will write $\matr B_e$ to indicate the matrix relative to the edge $e$ of length $l_e$.
        \end{definition}

        \begin{remark}
            The matrix $\matr B$ is invertible and, by setting
            \begin{align*}
                \matr B_{11} &= \diag
                \begin{bmatrix}
                    0!\\
                    \vdots \\
                    m!
                \end{bmatrix} &
                \begin{bmatrix}
                    \matr B_{21} & \matr B_{22}    
                \end{bmatrix} &= \square{\round{\prod_{j=0}^{k-1} (i-j)} l^{i-k}}_{\substack{k=0,\dots,m\\
                i=0,\dots,2m+1}},
            \end{align*}
            then we have that:
            \begin{equation}
                \matr B := \begin{bmatrix}
                    \matr B_{11} & \matr 0\\
                    \matr B_{21} & \matr B_{22}
                \end{bmatrix}, \qquad 
                \matr B^{-1} = \begin{bmatrix}
                    \matr B_{11}^{-1} & \matr 0\\
                    -\matr B_{22}^{-1} \matr B_{21} \matr B_{11}^{-1} & \matr B_{22}^{-1}
                \end{bmatrix}.
            \end{equation}

        \end{remark}

        \begin{definition}[$\matr M_l$ matrix]
            \label{def:matrix_M}
            We define $\matr M_l$ to be the $(2m+2)\times (2m+2)$ matrix that returns the dot product of the $(m+1)$-order derivatives of two polynomials on $[0,l]$, given their values at the endpoints (as explained in Definition \ref{def:definition_B}). In formulae: if $p_i(x) = \dVec x^\top \matr B_e^{-1} \dVec z_i$, for $i\in\curly{1,2}$, we define $\matr M_l$ such that 
            \begin{equation}
                \label{eq:matrix_M_definition}
                \dVec z_1^\top \matr M_l \dVec z_2 = \integ{0}{l}{p_1^{(m+1)}(x)\;p_2^{(m+1)}(x)}{x}.
            \end{equation}
            Hereafter, we will often use the shortcut $\matr M_e := \matr M_{l_e}$.
        \end{definition}
        \begin{propositionrep}[Explicit expression for $\matr M_l$]
            \label{prop:matrix_M_explicit_expression}
            It holds:
            \begin{equation}
                \label{eq:matrix_M_expression}
                \matr M_l =\begin{bmatrix}
                    \matr B_{11}^{-1} \matr B_{21}^\top  \\
                    -\matr I_{m+1}
                \end{bmatrix} \matr B_{22}^{-\top} \square{\frac{(m+1+i)!\; (m+1+j)!\;l^{i+j+1}}{i!\; j!\; (i+j+1)}}_{i,j=0}^{m} \matr B_{22}^{-1}\begin{bmatrix}
                    \matr B_{11}^{-1} \matr B_{21}^\top  \\
                    -\matr I_{m+1} 
                \end{bmatrix}^\top.
            \end{equation}
        \end{propositionrep}
        \begin{proof}[Proof of Proposition \ref{prop:matrix_M_explicit_expression}]
            Let us compute the dot product. In the following calculations, we set $\matr \Delta \in\R^{(2m+2)\times(2m+2)}$ the linear ($m+1$)-derivative operation on $\dVec x$ (that is $D_x^{(m+1)} \dVec x = \matr \Delta \dVec x$):
            \begin{equation*}
                \matr \Delta := \begin{bmatrix}
                    \matr 0 & \matr 0 \\
                    \matr \Delta_{21} & \matr 0
                \end{bmatrix}, \qquad \matr \Delta_{21} = \diag\square{\frac{(m+1+i)!}{i!}}_{i=0}^m\in\R^{(m+1)\times(m+1)}.
            \end{equation*}
            \begin{align*}
                &\integ{0}{l}{p_1^{(m+1)}(x)\,p_2^{(m+1)}(x)}{x}=\integ{0}{l}{
                D_x^{(m+1)} \round{\dVec z_1^\top \matr B^{-\top} \dVec x} \, D_x^{(m+1)} \round{\dVec z_2^\top \matr B^{-\top} \dVec x}
                }{x}\\
                &=\integ{0}{l}{
                \round{\dVec z_1^\top \matr B^{-\top} D_x^{(m+1)}\dVec x} \,  \round{\dVec z_2^\top \matr B^{-\top} D_x^{(m+1)}\dVec x}^\top
                }{x}\\
                &=\dVec z_1^\top \matr B^{-\top} \round{
                    \integ{0}{l}{\matr \Delta \dVec x \dVec x^\top \matr \Delta^\top }{x}
                } \matr B^{-1} \dVec z_2\\
                &=\dVec z_1^\top \matr B^{-\top}\matr \Delta \square{
                    \integ{0}{l}{ x^{i+j} }{x}
                }_{i,j=0}^{2m+1} \matr \Delta^\top \matr B^{-1} \dVec z_2\\
                &=\dVec z_1^\top \matr B^{-\top}  \begin{bmatrix}
                    \matr 0 & \matr 0 \\
                    \matr \Delta_{21} & \matr 0
                \end{bmatrix}\square{\frac{l^{i+j+1}}{i+j+1}
                }_{i,j=0}^{2m+1} \begin{bmatrix}
                    \matr 0 & \matr \Delta_{21} \\
                    \matr 0 & \matr 0
                \end{bmatrix} \matr B^{-1} \dVec z_2\\
                \intertext{Next, we rewrite the central term (which is a Hilbert matrix) in a convenient block-matrix form.}
                &=\dVec z_1^\top \matr B^{-\top}  \begin{bmatrix}
                    \matr 0 & \matr 0 \\
                    \matr \Delta_{21} & \matr 0
                \end{bmatrix}
                \begin{bmatrix}
                    \matr L_{11} & \matr L_{12} \\
                    \matr L_{21} & \matr L_{22}
                \end{bmatrix}
                \begin{bmatrix}
                    \matr 0 & \matr \Delta_{21} \\
                    \matr 0 & \matr 0
                \end{bmatrix} \matr B^{-1} \dVec z_2\\
                &=\dVec z_1^\top \begin{bmatrix}
                    \matr B_{11}^{-1} & \matr 0\\
                    -\matr B_{22}^{-1} \matr B_{21} \matr B_{11}^{-1} & \matr B_{22}^{-1}
                \end{bmatrix}^{\top}
                \begin{bmatrix}
                    \matr 0 & \matr 0 \\
                    \matr 0 & \matr \Delta_{21} \matr L_{11} \matr \Delta_{21}
                \end{bmatrix} \begin{bmatrix}
                    \matr B_{11}^{-1} & \matr 0\\
                    -\matr B_{22}^{-1} \matr B_{21} \matr B_{11}^{-1} & \matr B_{22}^{-1}
                \end{bmatrix} \dVec z_2\\
                &=\dVec z_1^\top \begin{bmatrix}
                    \matr B_{11}^{-1} \matr B_{21}^\top  \\
                    -\matr I_{m+1} 
                \end{bmatrix} \matr B_{22}^{-\top} \matr \Delta_{21} \matr L_{11} \matr \Delta_{21} \matr B_{22}^{-1}\begin{bmatrix}
                    \matr B_{11}^{-1} \matr B_{21}^\top  \\
                    -\matr I_{m+1} 
                \end{bmatrix}^\top \dVec z_2
            \end{align*}
            We now conclude the proof by noticing that:
            \begin{align*}
                \matr \Delta_{21} \matr L_{11} \matr \Delta_{21} &= \diag\square{\frac{(m+1+i)!}{i!}}_{i=0}^m \square{\frac{l^{i+j+1}}{i+j+1}
                }_{i,j=0}^{m} \diag\square{\frac{(m+1+i)!}{i!}}_{i=0}^m\\
                &=\square{\frac{(m+1+i)!\; (m+1+j)!\;l^{i+j+1}}{i!\; j!\; (i+j+1)}}_{i,j=0}^{m}.
            \end{align*}
        \end{proof}
        
        \begin{propositionrep}
            \label{prop:matrix_M_is_PSD_and_rank}
            The matrix $\matr M_l$ is symmetric, positive semidefinite and has rank $m+1$.
        \end{propositionrep}
        \begin{proof}[Proof of Proposition \ref{prop:matrix_M_is_PSD_and_rank}]
            The facts that $\matr M$ is symmetric and positive semidefinite are immediate consequences of Equation (\ref{eq:matrix_M_definition}). Indeed, for any $\dVec z_1, \dVec z_2$:
            \begin{align*}
                &\dVec z_1^\top \matr M_l \dVec z_2 = \int_{0}^{l}{p_1^{(m+1)}\;p_2^{(m+1)}} = \int_{0}^{l}{p_2^{(m+1)}\;p_1^{(m+1)}} = \dVec z_2^\top \matr M_l \dVec z_1\\
                \intertext{and}
                &\dVec z_1^\top \matr M_l \dVec z_1 = \integ{0}{l}{\round{p_1^{(m+1)}(x)}^2}{x}\geq 0.
            \end{align*}
            To show that $\rank \matr M = m+1$, it is sufficient to invoke the Sylvester’s inequality, noticing that $\matr M_l$ is the product of $5$ matrices, all of rank $m+1$. The central matrix $\matr L_{11}$ is full rank as it is a Hilbert matrix.
        \end{proof}
        
        The process $Z_\mu$ is completely determined by its value (and the one of its derivatives) at the vertices $V$. However, the continuity constraint $Z_\mu \in C^m$ imposes the Kirchhoff conditions of order $0,1,\dots,m$ at each vertex. As a consequence, we may not need all the derivative information at any node, since some are redundant. Here we write the process $Z_\mu$ at $V$ in a parsimonious way: the vector $\dVec z$ will store the necessary information about the process at $V$. In the following, we assume the existence of a total order relation on the vertices.
        \begin{notation}[$\dVec z$ vector]
            \label{notat:z_vector}
             For each vertex $v \in V$, we define the $(m+1)$-block-vector:
            \begin{equation*}
                \dVec z_v := \begin{bmatrix}
                    f(v)\\
                    \dVec \partial f(v)\\
                    \partial^2 f(v)\\
                    \dVec \partial^3 f(v)\\
                    \vdots
                \end{bmatrix},\qquad \dim(\dVec z_v) = \begin{cases}
                    \frac{m}{2}d_v+1 \qquad &\text{$m$ even}\\
                    \frac{m+1}{2}d_v\qquad&\text{$m$ odd}, 
                \end{cases}
            \end{equation*}
            where:
            \begin{itemize}
                \item the even-order derivatives ($\partial^{2k}f(v)$) have just one value, corresponding to the exiting derivative of order $2k$, which necessarily has to be equal among all directions;
                \item the odd-order derivatives ($\dVec \partial^{2k+1}f(v)$) store $d_v - 1$ values, corresponding to the exiting derivatives without the last one (always ordered w.r.t. the vertices total order). To recover the last derivative, by the odd-order Kirchhoff conditions, it is sufficient to flip the sign of the sum of the other values:
                \begin{equation*}
                    \begin{bmatrix}
                        \partial_{E_v(1)}^{2k+1} f(v)\\
                        \vdots\\
                        \partial_{E_v(d_v)}^{2k+1} f(v)\\
                    \end{bmatrix} = \begin{bmatrix}
                        \matr I_{d_v-1}\\
                        -\dVec 1_{d_v-1}^\top
                    \end{bmatrix} \dVec \partial^{2k+1} f(v).
                \end{equation*}
            \end{itemize}
            With such a definition, each vertex $v$ has all the information stored in $\dVec z_v$. Now, we define $\dVec z$ as the block-vector of all the $\dVec z_v$'s:
            \begin{equation*}
                \dVec z := \begin{bmatrix}
                    \dVec z_{v_1}\\
                    \vdots\\
                    \dVec z_{v_n}
                \end{bmatrix}, \qquad \qquad d_{\dVec z} := \dim(\dVec z) = \begin{dcases}
                    n+mq\qquad \text{$m$ even},\\
                    q+mq\qquad \text{$m$ odd}.
                \end{dcases}
            \end{equation*}
        \end{notation}
        \begin{notation}[Matrix $\matr H_e$]
            \label{not:matrix_H_e}
            Now, for a given edge $e\in E$, we define the matrix $\matr H_e\in\R^{(2m+2)\times d_{\dVec z}}$, that extracts from $\dVec z$ the information relative to the edge $e$. This is quite straightforward: we need the values and all the exiting derivatives in the two endpoints $\ul e, \ol e$, with the odd-order derivatives in $\ol e$ with flipped sign. Basically, we are moving from the process on the graph $G$ to the process on the edge $e$, considered as the segment $[0,l]$. As a consequence, $\matr H_e$ will contain only the values $0, 1, -1$.
        \end{notation}

    \subsection{The Laplacian matrix}
        We are (finally) able to write the generalised Laplacian matrix:
        \begin{equation}
            \label{eq:laplacian_definition}
            \matr L =  \sum_{e \in E} \matr H_e^\top \matr M_{l_e} \matr H_e.
        \end{equation}
        Albeit perhaps a bit ugly, such a matrix enjoys some nice properties, stated next.
        \begin{propositionrep}
            \label{prop:laplacian_consistency_m=0}
            For $m=0$, $\matr L$ coincides with the standard Laplacian matrix.
        \end{propositionrep}
        \begin{proof}[Proof of Proposition \ref{prop:laplacian_consistency_m=0}]
            For $m=0$, we have:
            \begin{equation*}
                \matr M_e = \frac{1}{l_e}\begin{bmatrix}
                    1 & -1\\
                    -1 & 1
                \end{bmatrix}.
            \end{equation*}
            In addition, $\dVec z$ will store only the 0-order derivative values, therefore $d_{\dVec z}=n$ and the matrix $\matr H_e$ will be
            \begin{equation*}
                \matr H_e = \begin{bmatrix}
                    \dVec \phi_{\ul e}^\top\\
                    \dVec \phi_{\ol e}^\top
                \end{bmatrix} \in \R^{2 \times n},
            \end{equation*}
            where $\dVec \phi_v$ is the vector of the canonical basis of $\R^n$ corresponding to the vertex $v\in V$. This means that $\matr H_e^\top \matr M_e \matr H_e$ will simply return an $n\times n$ matrix full of zeros, but the $2\times 2$ submatrix corresponding to the vertices $\ul e, \ol e$. Now, since each edge $e \in E$ is considered exactly once in (\ref{eq:laplacian_definition}), each entry $\matr L[v_1,v_2]$ will be
            \begin{equation*}
                \matr L[v_1,v_2] = \begin{dcases}
                    -\frac{1}{l(v_1,v_2)} \qquad&\text{for $v_1\sim v_2$},\\
                    0 \qquad &\text{for $v_1\not \sim v_2$},\\
                    \sum_{w\sim v_1} \frac{1}{l(v_1,w)}\qquad&\text{for $v_1=v_2$},
                \end{dcases}
            \end{equation*}
            which is the definition of the graph Laplacian matrix.
        \end{proof}
        
        \begin{propositionrep}
            \label{prop:matrix_L_Sym_PD_rankL}
            $\matr L$ is symmetric and positive semidefinite. In addition, the null space of $\matr L$ has dimension one and is the one generated by the vector $\dVec j \in \R^{d_{\dVec z}}$, which stores 1 in the positions corresponding to the values of the 0-order derivatives and 0 in all the other positions.
        \end{propositionrep}
        \begin{proof}[Proof of Proposition \ref{prop:matrix_L_Sym_PD_rankL}]
            Symmetry and positive-definiteness trivially follows directly from the definition (\ref{eq:laplacian_definition}) and from Proposition \ref{prop:matrix_M_is_PSD_and_rank}. Regarding the rank and the kernel, we want to characterise the space $\dVec z^\top \matr L \dVec z = 0$. From the definition of $\matr L$ and $\matr M_e$, it follows that for any $f,g \in \mathcal{F}_\mu$ (\ie $f,g$ are piecewise polynomials on each edge, satisfying the Kirchhoff conditions), if $\dVec z_f, \dVec z_g$ store all the derivative information about $f,g$ then we have
            \begin{equation*}
                \dVec z_f^\top \matr L \dVec z_g = \angled{f,g}_{\mathcal H_\mu} = \sum_{e \in E} \integ{0}{l_e}{f^{(m+1)}(x)\,g^{(m+1)}(x)}{x}.
            \end{equation*}
            Therefore, we are considering the following equation:
            \begin{equation*}
                0=\dVec z^\top \matr L \dVec z = \sum_{e \in E} \integ{0}{l_e}{\round{f^{(m+1)}(x)}^2}{x}.
            \end{equation*}
            Now, the above equation implies (see Lemma \ref{lem:f_zero_m+1_integ_implies_constant}) that $f$ is constant on $G$. This implies that $\dVec z$, which stores all the information about $f$, will assign the same value at each vertex, and $0$ to all the derivatives of any order. Clearly the dimension of this space is $1$. This concludes the proof. 
        \end{proof}
        
    \subsection{Finally \texorpdfstring{$Z_\mu$}{Z mu}}
        We are finally able to define the process $Z_\mu$.
        \begin{definition}
            \label{def:process_Z_mu}
            Let $\matr L$ as defined in (\ref{eq:laplacian_definition}) and let $\rVec Z_V \sim \Norm{\dVec 0}{\matr L^-}$, where $\matr L^-$ denotes the Moore-Penrose inverse of $\matr L$. We define $Z_\mu$ on $G$ (conditionally on $\rVec Z_V$) to be the polynomial process that interpolates the values of $\rVec Z_V$, that is:
            \begin{itemize}
                \item on each edge $e\in E$, $Z_\mu$ on $e$ is a polynomial of degree $2m+1$,
                \item in each vertex $v \in V$, $Z_\mu$ should match the exiting derivatives in $\rVec Z_V$.
            \end{itemize}
            This can be expressed as
            \begin{equation}
                \round{Z_\mu(x) \, \big|\,\rVec Z_V} = \dVec x^\top \matr B_e^{-1} \matr H_e \rVec Z_V,
            \end{equation}
            where $e$ is the edge containing $x$ and $\dVec x$ is defined in (\ref{eq:polynomial_vector_expression_variables}). Notice that this expression holds even for $x\in V$, in which case it is independent on the choice of the edge $e\in E_x$ (since $Z_\mu$ is continuous at the vertices).
        \end{definition}
        \begin{remark}[Distribution of $Z_\mu$]
            \label{rem:distribution_Z_mu}
            As a direct consequence of Definition \ref{def:process_Z_mu}, we obtain that $Z_\mu$ is a zero-mean Gaussian process on $G$, with covariance function and variogram given by:
            \begin{gather}
                \label{eq:cov_Z_mu}
                C_\mu(x_1,x_2) = \dVec x_1^\top \matr B_{e_1}^{-1} {\matr H}_{e_1} \matr L^- {\matr H}_{e_2}^\top \matr B_{e_2}^{-\top} \dVec x_2,\\
                \label{eq:variog_Z_mu}
                \gamma_{Z_\mu}(x_1, x_2) = \round{\matr H_{e_1}^\top \matr B_{e_1}^{-\top} \dVec x_1 - \matr H_{e_2}^\top \matr B_{e_2}^{-\top} \dVec x_2}^\top \matr L^- \round{\matr H_{e_1}^\top \matr B_{e_1}^{-\top} \dVec x_1 - \matr H_{e_2}^\top \matr B_{e_2}^{-\top} \dVec x_2},
            \end{gather}
            where $e_1,e_2$ are the edges containing $x_1,x_2 \in G$ and $\dVec x_1, \dVec x_2$ are defined as in (\ref{eq:polynomial_vector_expression_variables}).
        \end{remark}
        \begin{propositionrep}
            \label{prop:Z_mu_is_C^m}
            $Z_\mu \in C^m(G)$ in the sample-path sense.
        \end{propositionrep}
        \begin{proof}[Proof of Proposition \ref{prop:Z_mu_is_C^m}]
            Assume that $Z_\mu$ is defined on the probability space $(\Omega, \mathcal{F}, \P)$ and fix $\omega \in \Omega$. Then $\rVec Z_V(\omega)$ as in Definition \ref{def:process_Z_mu} will be an element of $\R^{d_{\dVec z}}$. On each edge $e\in E$, $Z_\mu$ is defined as the unique $(2m+1)$-degree polynomial interpolating the values and the derivatives at the endpoints. As a consequence, it is a $C^\infty$ function on $e$. Regarding the Kirchhoff conditions at each vertex $v\in V$, they are satisfied by construction of the vector $\dVec z$ as of the matrices $\matr H_e$ (see Notations \ref{notat:z_vector} and \ref{not:matrix_H_e}).
        \end{proof}

    \subsection{Construction of \texorpdfstring{$Z_E$}{Z E}}
        Now we proceed to define the additional noise component $Z_E$ of our process $Z$. $Z_E$ will be defined on each edge $e$ as an \emph{integrated Brownian bridge}. The interested reader could find some properties of the integrated Wiener process and Brownian bridge in \cite{Lachal_ClassBridgesIterated_2011}. Here we will provide a definition \cite[219]{vanZantenvanderVaart_ReproducingKernelHilbert_2008} and some relevant properties. 

        To be more precise, let $W_0(t)$ the standard Wiener process on $t \geq 0$. Define, for $m \in \N$, the \emph{$m$-times integrated Wiener process} via the following recursive equation:
        \begin{equation}
            \label{eq:def_integrated_Wiener_process}
            W_{i+1}(t) := \integ{0}{t}{W_i(\tau)}{\tau}.
        \end{equation}
        Define, in addition, the ($m+1$)-dimensional process storing $W_m$ and its derivatives (notice that by construction $\frac{d}{dt}W_{i+1}(t) = W_i(t)$ for $i\in \N$):
        \begin{equation}
            \rVec W_m(t) := \begin{bmatrix}
                W_0(t)\\
                W_1(t)\\
                \vdots\\
                W_m(t)
            \end{bmatrix}.
        \end{equation}
        Notice that, since $W_0(0) = 0$, we will have $\rVec W_m(0) = \dVec 0$ a.s..
        We then define the \emph{$m$-times integrated Brownian bridge} $B_m:[0,l] \to \R$ as follows:
        \begin{equation}
            \label{eq:integrated_Brownian_bridge}
            B_m(t) := \round{W_m(t) \, \Big| \, \rVec W_m(l) = \dVec 0}.
        \end{equation}
        Here, we report some relevant facts.
        \begin{remark}[Relevant properties of $W_m$ and $B_m$]
            The following properties have been taken from \cite{Lachal_ClassBridgesIterated_2011}.
            \begin{itemize}
                \item \textbf{Distribution.} Both $W_m$ and $B_m$ are Gaussian processes with zero mean.
                \item \textbf{Regularity.} $W_m, B_m \in C^m([0,l])$ in the sample-path sense.
                \item \textbf{Markovianity.} The process $\rVec W_m$ is Markovian.
                \item \textbf{Covariance.} The covariances of $\rVec W_m$ are 
                \begin{equation*}
                    \Cov{W_j(s)}{W_k(t)} = \integ{0}{s \wedge t}{\frac{(s-u)^j}{j!}\,\frac{(t-u)^k}{k!}}{u}.
                \end{equation*}
                \item \textbf{Decomposition.} If we condition $W_m$ to arbitrary derivative values in, say, $0<t_1<t_2$, then the resulting process has the same distribution as the sum of the interpolation polynomial (deterministic) and the process $B_m$. More precisely: let $\dVec w_1, \dVec w_2 \in \R^{m+1}$, then, on $[t_1,t_2]$,
                \begin{equation*}
                    \round{W_m(t) \, \Big| \, \rVec W_m(t_1) = \dVec w_1, \ \rVec W_m(t_2) = \dVec w_2} \eqDistr p(t; \dVec w_1, \dVec w_2) + B_m(t-t_1),
                \end{equation*}
                where $p(t; \dVec w_1, \dVec w_2)$ is the ($2m+1$)-degree interpolation polynomial whose derivatives are $\dVec w_1$ and $\dVec w_2$, and $B_m$ has $l=t_2-t_1$. This decomposition is particularly significant in the additive principle $Z = Z_\mu + Z_E$ (see below). Indeed, such a construction actually produces a process that is edge-wise an integrated Brownian bridge with prescribed derivatives at its endpoints.
                \begin{figure}[t]
                    \centering
                    \begin{subfigure}{0.31\textwidth} 
                        \centering
                        \includesvg[width = \linewidth]{Figures/IntBB_m=0.svg}
                    \end{subfigure}
                    \hfill
                    \begin{subfigure}{0.31\textwidth}
                        \centering
                        \includesvg[width = \linewidth]{Figures/IntBB_m=1.svg}
                    \end{subfigure}
                    \hfill
                    \begin{subfigure}{0.31\textwidth}
                        \centering
                        \centering
                        \includesvg[width = \linewidth]{Figures/IntBB_m=2.svg}
                    \end{subfigure}
                    \caption{Example of realisations of the processes $B_m$ and the conditioned $W_m$, for $m\in\curly{0,1,2}$. The dashed lines represent independent samples of $B_m$, whilst the solid lines represent the process $W_m$ conditioned on some values of its derivatives at the endpoints ($\dVec w_1$, $\dVec w_2$). The black, dotted line is the interpolation polynomial $p(t; \dVec w_1, \dVec w_2)$.}
                    \label{fig:examples_integrated_BB}
                \end{figure}
            \end{itemize} 
        \end{remark}
        \begin{remark}
            The covariance function of $B_m$ can be easily computed applying the standard conditional formula for Gaussian random vector. Indeed, let us consider the Gaussian random vector and its partitioned variance-covariance matrix
            \begin{equation*}
                \rVec X := \begin{bmatrix}
                    W_m(s)\\
                    W_m(t)\\
                    \rVec W_m(l)
                \end{bmatrix},\qquad \Var\round{\rVec X} =: \begin{bmatrix}
                    \matr \Sigma_{11} & \matr \Sigma_{12}\\
                    \matr \Sigma_{12}^\top & \matr \Sigma_{22}
                \end{bmatrix}.
            \end{equation*}
            It is well known that $\square{W_m(s), W_m(t)}^\top$ conditioned on $\rVec W_m(l) = \dVec 0$ is a Gaussian random vector with $0$ mean and covariance $\matr \Sigma_{11} - \matr \Sigma_{12} \matr \Sigma_{22}^{-1} \matr \Sigma_{12}^\top$. The $(1,2)$ entry of this resulting matrix will store the covariance $\Cov{B_m(s)}{B_m(t)}$. This means that 
            \begin{equation}
                \mathbb C\text{ov}\round{W_m(s), W_m(t) \ \Big| \ \rVec W_m(l) = \dVec 0} = C_{W_m}(s,t) - \dVec \sigma_s^\top \matr \Sigma_{22}^{-1} \dVec \sigma_t, 
            \end{equation}
            where $\dVec \sigma_s = \Exp{W_m(s) \rVec W_m(l)}$ and $\matr \Sigma_{22} = \Exp{\rVec W_m (l) \rVec W_m^\top (l)}$.
            
            Another method to obtain the covariance function of $B_m$ is discussed in \cite[Theorem 4.3]{Lachal_ClassBridgesIterated_2011}. Next we provide some examples for  $m = 0,1,2$ (see also Figure \ref{fig:examples_integrated_BB}). These hold for $0\leq s \leq t \leq l$:
            {\small
                \begin{align*}
                    &C_{B_0}\left( s, t \right) \ = \ \frac{{\left(l - t\right)} s}{l}, \qquad \text{(covariance of the standard Brownian bridge)}\\
                    &C_{B_1}\left( s, t \right) \ = \ \frac{{\left(3 \, l t - l s - 2 \, s t\right)} {\left(l - t\right)}^{2} s^{2}}{6 \, l^{3}},\\
                    &C_{B_2}\left( s, t \right) \ = \ \frac{{\left(l^{2} s^{2} - 5 \, l^{2} s t + 3 \, l s^{2} t + 10 \, l^{2} t^{2} - 15 \, l s t^{2} + 6 \, s^{2} t^{2}\right)} {\left(l - t\right)}^{3} s^{3}}{120 \, l^{5}}.
                \end{align*}
            }%
            Notice the presence of the factors $(l-t)^{m+1} s^{m+1}$, ensuring that $C_{B_m}$ goes to zero sufficiently fast as $s,t$ approach the boundary $\curly{0,l}$.
        \end{remark}
        \begin{definition}
            \label{def:process_Z_E}
            We define $Z_E$ piecewise on each edge: $Z_E$ on $e$, written $Z_e$, will be the $m$-times integrated Brownian bridge on $[0, l(e)]$ (see Equation (\ref{eq:integrated_Brownian_bridge})). We assume that each $Z_e$ is independent from all the other $Z_{e'}$'s and from $Z_\mu$. 
        \end{definition}
        The above definition implies that
        \begin{equation}
            \label{eq:cov_Z_E}
            C_E(x_1,x_2) := \Cov{Z_E(x_1)}{Z_E(x_2)} = \begin{dcases}
                C_{B_m}(x_1, x_2; l_e) \qquad&\text{if $e_1 = e_2$},\\
                0 \qquad&\text{if $e_1 \ne e_2$},
            \end{dcases}
        \end{equation}
        where $e_1, e_2$ are the edges containing $x_1,x_2$ respectively, and where $C_{B_m}(x_1, x_2; l_e)$ is the covariance between $Z_e(x_1)$ and $Z_e(x_2)$ when $e_1=e_2=:e$.
        \begin{propositionrep}
            \label{prop:Z_E_is_sample-path_C^m}
            $Z_E \in C^m(G)$.
        \end{propositionrep}
        \begin{proof}[Proof of Proposition \ref{prop:Z_E_is_sample-path_C^m}]
            The integrated Wiener process $W_m$ on $[0,l]$ is sample-path $m$-times differentiable \cite[see][Section 3]{Lachal_ClassBridgesIterated_2011}. As a consequence of the polynomial drift description for $B_m$ \cite[Theorem 4.1]{Lachal_ClassBridgesIterated_2011}, we also get that $B_m$ is sample-path $m$-times differentiable on $[0,l]$. The Kirchhoff conditions are trivially satisfied as the value and all the first $m$ derivatives of $B_m$ at $\curly{0,l}$ are null. This translates immediately on the derivatives of $Z_E$ at the vertices. 
        \end{proof}

    \subsection{Compendium}
        We are finally able to re-define the process $Z$ on $G$.
        \begin{definition}
            \label{def:process_Z_construction}
            Let $G$ be a metric graph and let $Z_\mu$ and $Z_E$ as defined above. Then we define
            \begin{equation}
                \label{eq:process_Z_additive_definition}
                Z(x) := Z_\mu(x) + Z_E(x).
            \end{equation}
        \end{definition}
        \begin{remark}
            Being the sum of two sample-path $C^m$ processes, we have that also $Z\in C^m(G)$ in the sample-path sense.
        \end{remark}
        \begin{remark}
            Since $Z_\mu \Ind Z_E$, the covariance (variogram) of $Z$ is the sum of the covariances (variograms) of $Z_\mu$ and $Z_E$:
            \begin{align}
                \label{eq:cov_Z} &C_Z(x_1,x_2) = C_\mu(x_1,x_2) + C_E(x_1,x_2),\\
                \label{eq:variog_Z} &\gamma_p(x_1,x_2) = \gamma_{Z_\mu}(x_1,x_2) + \gamma_{Z_E}(x_1,x_2).
            \end{align}
        \end{remark}
        \begin{propositionrep}
            \label{prop:RKHS_of_Z_is_H}
            Let $Z$ the process defined in Definition \ref{def:process_Z_construction}. Then, the RKHS of $Z$ is $\mathcal{H} = (\mathcal F, \angled{\cdot,\cdot}_{\mathcal{F}})$, as per Definition \ref{def:our_RKHS}. In other words, the two processes are the same.
        \end{propositionrep}
        \begin{proof}[Proof of Proposition \ref{prop:RKHS_of_Z_is_H}]
            Lemma \ref{lem:RKHS_Lemma3_Anderes} (C) has shown that $C_Z$ is the reproducing kernel of $\mathcal{H}$: being $C_Z$ the covariance function of $Z$ on $G$, this ensures that $\mathcal{H}$ is \emph{the} RKHS associated to $Z$.
        \end{proof}

        \begin{remark}
            \label{rem:euclidean_edges_not_a_problem}
            The definition of $Z_\mu$ (and, consequently, of $Z$) may appear to be dependent on the particular choice of the vertices. However, Proposition \ref{prop:RKHS_of_Z_is_H}, in concert with Proposition \ref{prop:invariance}, shows that the variogram $\gamma_p$ and, as a consequence, any process $Y=\Sch(Z)$ do not depend on the \emph{representation} of the metric graph (\ie on the presence of vertices of degree $2$), but are intrinsic properties of the graph itself. Of course, the explicit construction given in this section requires a Laplacian $\matr L$ built on graphs that have no self-loops or repeated edges, but this can easily be circumvented by sufficiently splitting such edges. 
        \end{remark}

\newpage

\section{Technical result}
    \label{app:technical_results}


    Here we collect several useful results necessary to show the ones presented in the main text.
    
    \begin{lemmarep}
        \label{lem:quadratic_kirchhoff_is_constant}
        Let $p \in C^2(G)$ be a function. If $p''$ is constant on each $e \in E$, then $p$ is constant on $G$.
    \end{lemmarep}
    \begin{proof}[Proof of Lemma \ref{lem:quadratic_kirchhoff_is_constant}]
        On $\R$, the only functions with constant second derivative are the degree-2 polynomials. This plainly translates into each $e\in E$: $p_e$ is a parabola. In addition, since $p$ satisfies the second-derivative Kirchhoff condition, the curvature $c$ must be constant on all the graph. Since $G$ is a compact set, there will be a maximum and a minimum: call them $x_+$ and $x_-$, respectively. \par
        We now show that the curvature $c$ should be non-positive in a neighbour of $x_+$. We consider two cases.
        \begin{itemize}
            \item If $x_+ \not\in V$ (\ie $x_+ \in e$), then $c\leq 0$ as $x_+$ is the local maximum of a parabolic function.
            \item If, instead, $x_+ \in V$, the order-1 Kirchhoff condition applies on $x_+$. Since the sum of exiting derivatives at $x_+$ should be null, then there exist (at least) one edge $e=(x_+, v)\in E_{x_+}$ such that the exiting derivative $\partial_e p(x_+) \geq 0$. This, in turn, implies that $c\leq 0$ in a neighbour of $x_+$ (otherwise, $p(v) = p(x_+) + \partial_e p(x_+) \, l_e + c \, l_e^2/2 > p(x_+)$, contradicting the maximum hypothesis on $x_+$).
        \end{itemize}
         This show that $c\leq 0$ on a neighbour of $x_+$ and, since $c$ is constant on $G$, we have that $c\leq 0$ on $G$. The same reasoning applies on $x_-$, leading to the conclusion that $c\geq 0$ on $G$. Therefore $c=0$ on $G$.\par
         Therefore $p$ is an affine function on each $e \in E$. Since $p$ is affine on each edge, both $x_+$ and $x_-$ should belong to $V$. Again, the order-1 Kirchhoff condition applies on $x_+$. This means that, $\forall v\sim x_+$, $p(v) = p(x_+)$, that is: $v$ is a global maximum too. Thus, $\forall w\sim v$, $p(w) = p(v) = p(x_+)$. Since $G$ is connected, there is a path connecting $x_+$ to $x_-$: it is sufficient to apply the above idea to all the vertices on this path to conclude that $p(x_-) = p(x_+)$, videlicet $p$ is constant on $G$.
    \end{proof}
    
    \begin{lemmarep}
        \label{lem:f_zero_m+1_integ_implies_constant}
        Let $f \in C^m(G)$ with $f^{(m)}$ absolutely continuous and such that
        \begin{equation*}
            \sum_{e \in E} \integ{0}{l_e}{\round{f^{(m+1)}(x)}^2}{x} = 0.
        \end{equation*}
        Then $f$ is constant on $G$.
    \end{lemmarep}
    \begin{proof}[Proof of Lemma \ref{lem:f_zero_m+1_integ_implies_constant}]
        The main hypothesis implies that $f^{(m+1)}(x) = 0$ a.e. on $G$. We consider two cases.
        \begin{itemize}
            \item \textbf{$m$ even.} We have that $f^{(m)}$ is constant on $G$. By applying Lemma \ref{lem:quadratic_kirchhoff_is_constant}, we get that $f^{(m-2)}$ is constant on $G$, which in turn implies that $f^{(m-4)}$ is constant on $G$, and so on. At the end we get that $f$ is constant on $G$.
            \item \textbf{$m$ odd.} In this case, we apply Lemma \ref{lem:quadratic_kirchhoff_is_constant} directly on $f^{(m+1)}=0$ (which is trivially constant). Again, by repeating the argument, we end up with $f$ constant on $G$.
        \end{itemize}
    \end{proof}


    \begin{lemmarep}[See {\citet[Lemma 1]{AnderesEtAl_IsotropicCovarianceFunctions_2020}}]
        \label{lem:RKHS_Lemma1_Anderes}
        $\mathcal H$ is an inner product vector space, with metric given by
        \begin{equation*}
            \norm{f}_{\mathcal F}^2 = \sum_{e \in E} \integ{0}{l_e}{\round{f_e^{(p)}(x)}^2}{x}.
        \end{equation*}
    \end{lemmarep}
    \begin{proof}[Proof of Lemma \ref{lem:RKHS_Lemma1_Anderes}]
        The metric expression comes directly from (\ref{eq:dot_product_RKHS}). In addition $\norm{f}_{\mathcal F}^2 = 0$ implies
        \begin{equation*}
            \sum_{e \in E} \integ{0}{l_e}{\round{f_e^{(p)}(x)}^2}{x} = 0,
        \end{equation*}
        from which we obtain (see Lemma \ref{lem:f_zero_m+1_integ_implies_constant}) that $f$ is constant on $G$. But since $\sum_v f(v)=0$, we conclude that $f=0$ on $G$. Symmetry, bilinearity and positive definiteness come directly from (\ref{eq:dot_product_RKHS}).
    \end{proof}

    \begin{definition}
        \label{def:projectors_P_mu_and_P_e}
        We consider the following projectors $\mathcal{P}_\mu, \mathcal{P}_e:\mathcal{F} \to \mathcal{F}$, for $e\in E$. For $f \in \mathcal{F}$, we define $\mathcal{P}_\mu(f)$ as the piecewise ($2m+1$)-degree polynomial function that interpolates the derivatives of $f$ at $\ul e, \ol e$ on each edge $e$. In addition, on each edge $e\in E$, we define $\mathcal{P}_e$ as follows:
        \begin{equation}
            \label{eq:definition_P_e}
            \mathcal{P}_e f(x) := \begin{cases}
                f(x) - \round{\mathcal{P}_\mu f}(x) &\qquad\text{on $x\in e$}\\
                0 &\qquad\text{on $x\not\in e$.}
            \end{cases}
        \end{equation}
        For the sake of simplicity, we set:
        \begin{align*}
            &\mathcal{F}_\mu := \mathcal{P}_\mu \mathcal F & & \angled{\cdot,\cdot}_\mu := \angled{\cdot,\cdot}_\mathcal{F}\big|_{\mathcal{F}_\mu \times \mathcal{F}_\mu} & & f_\mu := \mathcal{P}_\mu f,\\
            &\mathcal{F}_e := \mathcal{P}_e \mathcal F & & \angled{\cdot,\cdot}_e := \angled{\cdot,\cdot}_\mathcal{F}\big|_{\mathcal{F}_e \times \mathcal{F}_e} & & f_e := \mathcal{P}_e f.
        \end{align*}
    \end{definition}
    
    \begin{lemmarep}
        \label{lem:RKHS_Lemma2_Anderes}
        See {\citet[Lemma 2]{AnderesEtAl_IsotropicCovarianceFunctions_2020}}. Let $G$ be a metric graph, then $\mathcal{P}_\mu$ and all the $\mathcal{P}_e$ are mutually-orthogonal projectors (\ie $\mathcal P^2 = \mathcal{P}$). In addition,
        \begin{equation}
            \label{eq:direct_sum}
            \mathcal{F} = \mathcal{F}_\mu \oplus \bigoplus_{e \in E} \mathcal{F}_e.
        \end{equation}
    \end{lemmarep}
    \begin{proof}[Proof of Lemma \ref{lem:RKHS_Lemma2_Anderes}]
        It is immediate to show that $\mathcal{P}_\mu$ is a projector. Indeed, on each edge $e\in E$, $\mathcal{P}_\mu(f)\big|_e$ is the interpolation polynomial that matches the value and the derivatives at $\ul e, \ol e$. As a consequence, applying $\mathcal{P}^2_\mu(f)\big|_e$ means applying $\mathcal P_\mu$ on that polynomial, which is a fixed point for $\mathcal{P}_\mu$. Since this applies on each $\bigcup_{e\in E} e = G$, we have that $\mathcal{P}_\mu$ is a projector on $G$. Now, let $f \in \mathcal{F}$ and $e\in E$: on $x\in e$ we have:
        \begin{align*}
            \mathcal{P}_e^2 \ f &= (\mathcal{I} - \mathcal{P}_\mu)^2 \ f = (\mathcal{I}^2 - 2\mathcal{P}_\mu + \mathcal{P}_\mu^2) \ f = (\mathcal{I} - \mathcal{P}_\mu) f = \mathcal{P}_e \ f;
        \end{align*}
        while for $x\not \in e$, trivially $\mathcal{P}_e^2 f = 0 = \mathcal{P}_e f$. Therefore $\mathcal{P}_e$ is a projector as well. \par
        Notice that, for any $f\in \mathcal{F}$,
        \begin{equation*}
            f = \mathcal{P}_\mu f + \sum_{e\in E} \mathcal{P}_e f,
        \end{equation*}
        which comes directly from the fact that, on each $e$, $\mathcal{P}_\mu + \mathcal{P}_e = \mathcal{I}$. Finally, for any $e\in E$, we can integrate by parts $p$ times: the boundary terms vanish because all the derivatives of $\mathcal{P}_e$ at the vertices are null. Thus, we have
        \begin{align*}
            \angled{\mathcal{P}_\mu f, \mathcal{P}_e f} = \angled{L^{p} (\mathcal{P}_\mu f), \mathcal{P}_e f}_{L^2}=\angled{0, \mathcal{P}_e f}_{L^2} = 0,
        \end{align*}
        since $\mathcal{P}_\mu f$ is a polynomial of degree not greater than $2p-1$, thus its ($p$)-order derivative is null.
        This shows (\ref{eq:direct_sum}) and concludes the proof.
    \end{proof}

    \begin{lemmarep}[See {\citet[Lemma 3]{AnderesEtAl_IsotropicCovarianceFunctions_2020}}]
        \label{lem:RKHS_Lemma3_Anderes}
        We introduce the following notation.
        \begin{itemize}
            \item Let $\mathcal H_\mu := (\vspan \matr L, \angled{\cdot,\cdot}_{\matr L})$ denote the finite-dimensional Hilbert space with inner product given by $\angled{\dVec z_1, \dVec z_2}_{\matr L} := \dVec z_1^\top \matr L \dVec z_2$. Here $\vspan \matr L = \curly{\dVec v \in \R^{d_\dVec z} \ : \ \dVec j^\top \dVec v = 0}$ denotes the range or column space of $\matr L$.
            \item Let $\mathcal H_e$ denote the infinite-dimensional Hilbert space of $C^m$ functions on $e=[0,l(e)]$ such that $f^{(m)}$ is absolutely continuous and $f^{(p)}\in L^2([0,l_e])$ and, for each $i\in\curly{0,\dots,m}$, $f^{(i)}(0)=f^{(i)}(l_e)=0$. We endow $\mathcal H_e$ with the dot product
            \begin{equation*}
                \angled{f,g}_{\mathcal{H}_e} := \integ{0}{l_e}{f^{(p)}(x) \ g^{(p)}(x)}{x}.
            \end{equation*}
            Then, we have the following:
            \begin{enumerate}[label=\normalfont(\Alph*)]
                \item $\round{\mathcal F_\mu, \angled{\cdot,\cdot}_\mu}$ is a finite-dimensional Hilbert space, which is isomorphic to $\mathcal H_\mu$ and has reproducing kernel $C_\mu$ defined in (\ref{eq:cov_Z_mu}). In particular, for all $f,g \in \mathcal{F}_\mu$, it holds:
                \begin{equation}
                    \label{eq:simplified_dot_product_on_H_mu}
                    \angled{f,g}_\mu = \dVec z_f^\top \matr L \dVec z_g,
                \end{equation}
                where $\dVec z_f, \dVec z_g$ store the values of the derivatives of $f,g$ at $V$, as described in Notation \ref{notat:z_vector}.
                \item For each $e\in E$, $\round{\mathcal{F}_e, \angled{\cdot,\cdot}_{e}}$ is a infinite-dimensional Hilbert space, which is isomorphic to $\mathcal{H}_e$ and has reproducing kernel $C_e\big|_e$, where $C_e$ is defined in (\ref{eq:cov_Z_E}). In particular, for all $f,g \in \mathcal{F}_e$, it holds:
                \begin{equation}
                    \angled{f,g}_{e} = \angled{f,g}_{\mathcal H_e}.
                \end{equation}
                \item $\mathcal H$ is a infinite-dimensional Hilbert space, which is isomorphic to $\R^{d_{\dVec z}} \oplus \bigoplus_{e \in E} \mathcal{H}_e$ and has reproducing kernel $C_Z$ defined in (\ref{eq:cov_Z}). In particular, for all $f,g \in \mathcal{F}$, it holds:
                \begin{equation}
                    \angled{f,g}_\mathcal{F} = \angled{f_\mu, g_\mu}_\mu + \sum_{e \in E} \angled{f_e, g_e}_e.
                \end{equation}
            \end{enumerate}
        \end{itemize}
    \end{lemmarep}
    
    \begin{proof}[Proof of Lemma \ref{lem:RKHS_Lemma3_Anderes}]
        \begin{enumerate}[label=\normalfont(\Alph*)]
            \item Let $f \in \mathcal F_\mu$. Being $f$ a piecewise polynomial of degree $2p-1$ on each edge, it is characterised by its derivatives at the endpoints, therefore, all the information in $f$ is contained in $\dVec z_f \in \vspan \matr L$. This defines the bijection between the two spaces $\round{\mathcal F_\mu, \angled{\cdot,\cdot}_\mu}$ and $\mathcal H_\mu$. Now we need to show that such a bijection is an isomorphism. Let $f,g \in \mathcal{F}_\mu$ and $e\in E$: clearly, $f,g$ on $e$ are completely determined by $\matr H_e \dVec z_f$ and $\matr H_e \dVec z_g$. Now we can invoke the definition of $\matr M_e$ (see Definition \ref{def:matrix_M}) to have
            \begin{equation*}
                \integ{0}{l_e}{f_e^{(p)}(x) \ g_e^{(p)}(x)}{x} = \round{\matr H_e \dVec z_f}^\top \matr M_e \round{\matr H_e \dVec z_g}.
            \end{equation*}
            Therefore, starting from (\ref{eq:dot_product_RKHS}), we get:
            \begin{align*}
                \angled{f,g}_\mathcal{F} &= \sum_{e \in E} \integ{0}{l_e}{f_e^{(p)}(x) \ g_e^{(p)}(x)}{x}\\
                &=\sum_{e \in E} \dVec z_f^\top \matr H_e^\top \matr M_e \matr H_e \dVec z_g\\
                &= \dVec z_f^\top \round{\sum_{e\in E} \matr H_e^\top \matr M_e \matr H_e} \dVec z_g \\
                &= \dVec z_f^\top \matr L \dVec z_g.
            \end{align*}
            This shows (\ref{eq:simplified_dot_product_on_H_mu}) and that $\round{\mathcal F_\mu, \angled{\cdot,\cdot}_\mu}$ is isomorphic to $\mathcal{H}_\mu$. It remains to show that $\mathcal H_\mu$ is the RKHS of $C_\mu$, namely that, for each $f \in \mathcal{F}_\mu$ and for each $x\in G$, $f(x) = \angled{f, C_\mu(\cdot, x)}_\mu$. We are going to use the just-shown isomorphism (\ref{eq:simplified_dot_product_on_H_mu}) between Hilbert spaces. Recall the expression of $C_\mu$ (\ref{eq:cov_Z_mu}):
            \begin{align*}
                C_\mu(y,x)&=\dVec y^\top \matr B_{e_y}^{-1} \matr H_{e_y} \matr L^- \matr H_{e_x}^\top \matr B_{e_x}^{-\top} \dVec x.
            \end{align*}
            This implies that the vector $\dVec z_{C_\mu(\cdot, x)}$, which stores all the information of $C_\mu(\cdot, x)$, reads
            \begin{equation*}
                \dVec z_{C_\mu(\cdot, x)} = \matr L^- \matr H_{e_x}^\top \matr B_{e_x}^{-\top} \dVec x.
            \end{equation*}
            Therefore, by (\ref{eq:simplified_dot_product_on_H_mu}), we get
            \begin{align*}
                \angled{f,C_\mu(\cdot, x)} &= \dVec z_f^\top \matr L \dVec z_{C_\mu(\cdot, x)} = \dVec z_f^\top \matr L \matr L^- \matr H_{e_x}^\top \matr B_{e_x}^{-\top} \dVec x \\
                &= \dVec z_f^\top \matr H_{e_x}^\top \matr B_{e_x}^{-\top} \dVec x = f(x).
            \end{align*}
            \item Let $f \in \mathcal{F}_e$. This means that $f=0$ on $G \setminus e$ and that the value and all the first $m$ derivatives are null in $\ul e$ and $\ol e$. Clearly this is a bijective map between $\mathcal F_e$ and the function space of $\mathcal{H}_e$. In addition, if $f,g \in \mathcal{F}_e$ and $f',g'$ are their restriction on the domain $e$ (and therefore on $[0,l_e]$), it is immediate that:
            \begin{align*}
                \angled{f,g}_e &= \angled{f,g}_\mathcal{F} = \sum_{e'\in E} \integ{0}{l_{e'}}{f_{e'}^{(p)}(x) \ g_{e'}^{(p)(x)}}{x} \\
                &= \integ{0}{l_e}{f_e^{(p)}(x) \ g_e^{(p)}(x)}{x} = \angled{f', g'}_{\mathcal{H}_e}.
            \end{align*}
            To show that $\mathcal{H}_e$ is the RKHS of $C_e$, we start by recalling that the RKHS of the unconstrained $m$-times integrated Wiener process $W_m$ is the space of $C^m$ functions on $[0,l(e)]$ such that $f^{(m)}$ is absolutely continuous and $f^{(p)}\in L^2([0,l_e])$ and, for each $i\in\curly{0,\dots,m}$, $f^{(i)}(0)=0$ \cite[see][Example 10]{vanZantenvanderVaart_ReproducingKernelHilbert_2008}. The endpoint-conditioned process is a generalised Gaussian bridge in the sense of \cite[Definition 1.3]{SottinenYazigi_GeneralizedGaussianBridges_2014}. By their orthogonal bridge representation, Gaussian conditioning on finitely many path functionals is the Hilbert-space projection removing the span of the conditioned Gaussian variables. Hence, the RKHS of the bridge is the closed subspace of the RKHS of $W_m$ satisfying the corresponding endpoint constraints.
            \item This follows directly from Lemma \ref{lem:RKHS_Lemma2_Anderes} and {\normalfont(A),(B)}.
        \end{enumerate}
    \end{proof}

    \begin{lemmarep}
        \label{lem:int_by_parts_cancels_non-int_term}
        Let $f\in C^{m_1}(G)$ and $g\in C^{m_2}(G)$. Then, for each $0\leq i \leq m_1$ and $0\leq j \leq m_2$ such that $i+j$ is odd, it holds:
        \begin{equation}
            \label{eq:int_by_parts_cancels_non-int_term}
            \sum_{e\in E} \round{f_e^{(i)}(\ol e) \ g_e^{(j)}(\ol e) - f_e^{(i)}(\ul e) \ g_e^{(j)}(\ul e)} = 0.
        \end{equation}
    \end{lemmarep}
    \begin{proof}[Proof of Lemma \ref{lem:int_by_parts_cancels_non-int_term}]
        Since $i+j$ is odd, either $i$ is even and $j$ is odd or the opposite. Without loss of generality, we assume the former case (if that was not the case, simply invert the roles of $f$ and $g$). Now, the sum (\ref{eq:int_by_parts_cancels_non-int_term}) is essentially taking the sum over all the vertices: 
        \begin{align*}
            \sum_{e\in E} & \round{f_e^{(i)}(\ol e) \ g_e^{(j)}(\ol e) - f_e^{(i)}(\ul e) \ g_e^{(j)}(\ul e)} \\
            &= \sum_{e=(v,w)\in E} \round{\frac{\partial^i}{\partial_e x^i}f(w) \ \frac{\partial^j}{\partial_e x^j}g(w) - \frac{\partial^i}{\partial_e x^i}f(v) \ \frac{\partial^j}{\partial_e x^j}g(v)}\\
            &=-\sum_{v\in V} \sum_{e \in E_v} \frac{\partial^i}{\partial_e x^i}f(v) \ \frac{\partial^j}{\partial_e x^j}g(v)\\
            &=-\sum_{v\in V} \round{\frac{\partial^i}{\partial x^i}f(v) \sum_{e \in E_v}  \frac{\partial^j}{\partial_e x^j}g(v)}=-\sum_{v\in V} \frac{\partial^i}{\partial x^i}f(v) \cdot 0 = 0.
        \end{align*}
        In the last line, we have used the fact that $i$ is even, therefore all the exiting $i$-order derivatives at each vertex $V$ are equal: $\frac{\partial^i f(v)}{\partial_e x^i} =: \frac{\partial^i f(v)}{\partial x^i}$, regardless of $e\in E_v$. In addition, we used the fact that, being $j$ odd, the sum of the exiting derivatives is null.
    \end{proof}
    
    \begin{lemmarep}
        \label{lem:our_dot_to_L2}
        Let $m\in \N$, and $f,g\in \mathcal{F}$, where, in addition, $f\in C^{2m+2}(G)$. Then, it holds:
        \begin{equation}
            \label{eq:our_dot_to_L2}
            \angled{f,g}_{\mathcal{F}} = \angled{L^{p}f,g}_{L^2}.
        \end{equation}
    \end{lemmarep}
    \begin{proof}[Proof of Lemma \ref{lem:our_dot_to_L2}]
        We are going to apply the integration by parts formula $p$ times in concert with Lemma \ref{lem:int_by_parts_cancels_non-int_term}.
        \begin{align*}
            \angled{f,g}_{\mathcal{F}} &= \sum_{e\in E} \integ{0}{l_e}{f_e^{(p)} \ g_e^{(p)}}{x}\\
            &=\overbrace{\sum_{e\in E} \square{f_e^{(p)} \ g_e^{(m)}}_0^{l_e}}^{=0} -\sum_{e\in E} \integ{0}{l_e}{f_e^{(m+2)} \ g_e^{(m)}}{x}\\
            &=-\round{\overbrace{\sum_{e\in E} \square{f_e^{(m+2)} \ g_e^{(m-1)}}_0^{l_e}}^{=0} -\sum_{e\in E} \integ{0}{l_e}{f_e^{(m+3)} \ g_e^{(m-1)}}{x}}\\
            &=...= (-1)^{p} \sum_{e\in E} \integ{0}{l_e}{f_e^{(2m+2)} \ g_e^{(0)}}{x}\\
            &=\sum_{e\in E} \integ{0}{l_e}{\round{L^{p}f_e} \ g_e}{x} = \angled{L^{p}f,g}_{L^2}.
        \end{align*}
    \end{proof}

\end{toappendix}

\begin{toappendix}
    \newpage
    \section{Proofs of the main results}
    \label{app:proofs}
    \begingroup\makeatletter
    \let\axp@oldsection\subsection
    \makeatother
\end{toappendix}

\section{Introduction}
    In recent years there has been an increased interest in the definition of stochastic processes on metric graphs, these being (undirected and weighted) graphs, in which every edge represents not only a sheer relation between vertices, but also has a \emph{physical} meaning: edges are identified with real segments of prescribed positive length. Metric graphs can be used to describe domains such as river networks or road networks \citep{CressieEtAl_SpatialPredictionRiver_2006a, OkabeSugihara_SpatialAnalysisNetworks_2012}. \par
    One of the main challenges of defining continuously-indexed stochastic processes on metric graphs is the presence of branches: they make it inappropriate to embed the network in a Euclidean space (usually $\R^2$) and define covariances between locations as covariances of well-studied processes defined on the whole Euclidean space at those locations. Indeed, even though this strategy produces valid covariance functions, they may not represent the graph topology appropriately. For instance, two river locations could be spatially close, but very distant on the shortest-path river-network topology. As a consequence, if we want to model some phenomena that lives entirely on the river network, the embedding approach may be inappropriate. Two complementary approaches have been explored to study stochastic processes on metric graphs. The former defines a family of partial stochastic differential equations (SDE) on the metric graph and studies the properties of its solution \citep{BolinEtAl_GaussianWhittleMatern_2024, BolinEtAl_NewClassNonstationary_2026}. The latter directly models the covariance function via Hilbert-space embeddings: the covariance function is defined as a composition of either the shortest-path or the resistance metric with a suitable family of functions \citep{AnderesEtAl_IsotropicCovarianceFunctions_2020, FilosiEtAl_TemporallyEvolvingGeneralisedNetworks_2024}. Up to now, the choices of distances used in this embeddings have produced classes of covariance functions that could model only continuous, yet non-differentiable processes. On the contrary, the SDE approach has been able to define processes with arbitrary differentiability \citep{BolinEtAl_NewClassNonstationary_2026}.\par 
    In this paper, we change the geometry of a metric graph by defining a new class of distances (called polyharmonic throughout), which extend both the effective resistance distance defined in \cite{AnderesEtAl_IsotropicCovarianceFunctions_2020} and the less-studied biharmonic distance introduced on two-dimensional surfaces in \cite{LipmanEtAl_BiharmonicDistance_2010} and recently defined on combinatorial graphs \cite{BlackEtAl_BiharmonicDistanceGraphs_2025}. They will be defined as the square root of variograms of a suitable class of stochastic processes and will enjoy the following spectral representation:
    \begin{equation}
        \label{eq:intro_spectral_form_d}
        d_p^2(x_1,x_2) = \sum_{k=1}^{\infty} \frac{\round{\phi_k(x_1) - \phi_k(x_2)}^2}{\lambda_k^{p}}, \qquad x_1,x_2\in G
    \end{equation}
    where $\curly{(\lambda_k,\phi_k)}_k$ are the non-zero eigenvalues and associated eigenfunctions of the Kirchhoff-Laplace operator defined on the metric graph $G$ and provide an orthonormal basis of $L^2(G)$. In particular $d_1^2$ is the effective resistance distance defined in \cite{AnderesEtAl_IsotropicCovarianceFunctions_2020}, whilst $d_2$ is the biharmonic distance. The parameter $p\in\N^+$ plays a crucial role and will be related to the differentiability of the isotropic process having covariance $\psi(d_p^2(x_1,x_2))$, being $\psi:[0,+\infty)\to\R$ a non-constant completely monotonic function. \par
    The distance $d_p$ is associated to the $p$-order energy $\integ{G}{}{(f^{(p)}(x))^2}{x}$ and both its definition (via a reproducing kernel Hilbert space having squared norm such energy) and variational characterisation will rely on it. Moreover, we prove that it is invariant under changes of graph representations. These properties provide a nice interpretation of the polyharmonic distance $d_p$ and a sound motivation. Furthermore, we prove the validity of the class of covariance functions $\psi(d_p^2(x_1,x_2))$ on the whole graph $G$. Next, we characterise the differentiability properties of the process defined via such a covariance function (by means of the celebrated Shoenberg theorem \cite{Schoenberg_MetricSpacesPositive_1938}), in terms of the order $p$ of the distance and of the derivatives of $\psi$ in a right-neighbourhood of $0$: we will show that, under appropriate assumptions, such processes will inherit a mean-square ($p-1$)-order differentiability on the interior of each edge, while satisfying Kirchhoff conditions of order $1$ at the vertices. In addition, we that the auxiliary process $Z$ (whose variogram is $d_p^2$) is a centred version of the limit for $\kappa\to 0$ of the Whittle-Mat\`ern random fields defined in \cite{BolinEtAl_GaussianWhittleMatern_2024}. We stress that the two classes of processes (the auxiliary ones defining the polyharmonic distances and the resulting isotropic ones) are different and the properties of one do not transfer automatically to the other. \par 
    The remainder of the paper is organised as follows. Section \ref{sec:background} presents the necessary mathematical framework and objects involved in our construction. Section \ref{sec:polyharmonic_distances} defines polyharmonic distances, provides their variational and spectral representations and proves some of their properties. Section \ref{sec:smooth_isotropic_covariances} provides the link between the differentiability properties of a process defined on the real line and on metric graphs and of the ones having isotropic covariance functions w.r.t. its variogram. Furthermore, it provides some negative results. Section \ref{sec:examples} presents some simple graph examples and their polyharmonic distances for some values of $p$. Section \ref{sec:conclusion} concludes the manuscript. \par
    In addition, Appendix \ref{app:explicit_construction_Z} provides an explicit constructive definition of the auxiliary process $Z$. Appendix \ref{app:technical_results} is devoted to additional technical results useful to show the main-text ones. Finally, Appendix \ref{app:proofs} collects all the proofs of the main results.

\section{Mathematical background}
\label{sec:background}
\subsection{Metric graphs}
    Linear networks are collections of segments in the real plane $\R^2$, resulting in a graph having \emph{physical} points on the edges. These topologies have been vastly used in spatial statistics over the last decades \citep[see, for instance,][]{CressieEtAl_SpatialPredictionRiver_2006a,OkabeSugihara_SpatialAnalysisNetworks_2012,BaddeleyEtAl_AnalysingPointPatterns_2021}. However, the constraint of living in $\R^2$ brings several restrictions: first, each edge length and shape are fixed given its endpoints. In addition, the represented graph cannot have crossing and non-intersecting edges (that may represent tunnels or bridges). As a consequence, linear networks may not be appropriate to model several real-world networks. Metric graphs overcome these issues by removing the $\R^2$-embedding of linear networks: indeed metric graphs can be seen as collections of segments with an equivalence relation on their endpoints that defines which edges share an endpoint, that is a node \citep[see, for instance,][Section 1]{Mugnolo_WhatActuallyMetric_2021}. This is a quite abstract definition and allows to build graphs with arbitrary edge lengths, self-loops (edges connecting a node with itself) and multiple edges between the same pair of vertices. Throughout the manuscript, we will assume that $G$ is finite and connected and each edge $e\in E$ has positive, finite length $l_e$. In addition, for the sake of simplicity, we will assume that the $G$ is \emph{simple}, \ie it has no self-loops or multiple edges. Although this may seem restrictive, it is a hypothesis merely aimed at simplifying the exposition. Indeed, we will prove that the distance family will not depend on addition or removal of degree-$2$ vertices. As a consequence, since for any non-simple $G$ there is always another equivalent simple metric graph (simply add vertices on the ``non-simple'' edges), the distances and processes defined here are well-defined also on non-simple metric graph.

\subsection{Isotropic processes on metric graphs}
    \label{ssec:covariance_functions_stochastic_processes}
    \cite{AnderesEtAl_IsotropicCovarianceFunctions_2020} extended the so-called resistance distance (see Subsection \ref{ssec:resistance_and_biharmonic_distances}), usually defined between the vertices of a graph, to the whole topology of metric graphs. In addition, by means of the Schoenberg theorem (\cite{Schoenberg_MetricSpacesPositive_1938}), stated below, they were able to define a class of isotropic (w.r.t. such distance) covariance functions valid on any metric graph. As we are going to use the same argument, we report \cite[Theorem 6]{AnderesEtAl_IsotropicCovarianceFunctions_2020}, together with \cite[Theorem 8]{AnderesEtAl_IsotropicCovarianceFunctions_2020}.\par
    \begin{theorem}
        \label{theo:schoenberg}
        Let $(X,d)$ be a distance space and $x_0\in X$. Then, the following statements are equivalent.
        \begin{enumerate}
            \item $(X,d) \overset{\rm id}{\hookrightarrow} \mathcal H$, for some Hilbert space $\mathcal H$, namely there exist $\varphi:X\to \mathcal{H}$ such that
            \begin{equation*}
                d(x_1,x_2) = \norm{\varphi(x_1) - \varphi(x_2)}_{\mathcal{H}}.
            \end{equation*}
            \item $d^2(x_1, x_0) + d^2(x_2,x_0) - d^2(x_1,x_2)$ is positive definite on $X\times X$.
            \item \label{item:shoenberg_CM_is_PD} For every non-constant completely monotonic function $\psi:[0,+\infty]\to \R$ such that $\psi(0) < +\infty$, the function 
            \begin{equation}
                (x_1,x_2) \mapsto \psi(d^2(x_1,x_2))
            \end{equation}
            is strictly positive definite on $X\times X$.
            \item \label{item:shoenberg_CND} $d^2$ is conditionally negative definite, that is: for all $n\in\N^+$, $\dVec x \in X^n$ and $\dVec c \in \R^n$ such that $\dVec 1_n^\top \dVec c = 0$, it holds
            \begin{equation*}
                \sum_{i,j=1}^n c_i \, c_j \, d^2(x_i,x_j) \leq 0.
            \end{equation*}
        \end{enumerate}
    \end{theorem}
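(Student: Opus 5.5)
This is the classical Schoenberg theorem, as recalled in \cite[Theorems 6 and 8]{AnderesEtAl_IsotropicCovarianceFunctions_2020}. I would prove it through the cycle (1)$\Rightarrow$(4)$\Rightarrow$(2)$\Rightarrow$(1), and then attach (3) via (4)$\Rightarrow$(3)$\Rightarrow$(4).

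\textbf{(1)$\Rightarrow$(4).} Expand $\norm{\varphi(x_i)-\varphi(x_j)}^2 = \norm{\varphi(x_i)}^2+\norm{\varphi(x_j)}^2-2\angled{\varphi(x_i),\varphi(x_j)}$. If $\dVec 1_n^\top\dVec c=0$, the two norm terms vanish after summing against $c_ic_j$. What remains is $\sum_{i,j} c_ic_j d^2(x_i,x_j) = -2\norm{\sum_i c_i\varphi(x_i)}^2\leq 0$.

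\textbf{(4)$\Rightarrow$(2).} Take arbitrary $x_1,\dots,x_n$ and $\dVec a\in\R^n$. Append the point $x_0$ with coefficient $c_0=-\sum_i a_i$, so the extended coefficient vector sums to zero. Applying (4) and using $d(x_0,x_0)=0$, a direct expansion gives
\begin{equation*}
\sum_{i,j} a_ia_j\round{d^2(x_i,x_0)+d^2(x_j,x_0)-d^2(x_i,x_j)}\geq 0 .
\end{equation*}

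\textbf{(2)$\Rightarrow$(1).} Let $K(x_1,x_2)$ be the positive definite kernel in (2), and let $\mathcal H$ be its RKHS (Moore--Aronszajn), with $\varphi(x)=K(\cdot,x)$. Since $K(x,x)=2d^2(x,x_0)$, we get $\norm{\varphi(x_1)-\varphi(x_2)}^2 = K(x_1,x_1)+K(x_2,x_2)-2K(x_1,x_2)=2d^2(x_1,x_2)$. Rescaling $\varphi$ by $1/\sqrt2$ then gives the embedding required in (1).

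\textbf{(4)$\Rightarrow$(3).} First, for every $t>0$ the kernel $e^{-t d^2}$ is positive definite; this is the core lemma of Schoenberg. Write $e^{-td^2(x_i,x_j)} = e^{-td^2(x_i,x_0)}\,e^{-td^2(x_j,x_0)}\,e^{tK(x_i,x_j)}$. The last factor is positive definite because $K$ is positive definite (by (4)$\Rightarrow$(2)) and, by the Schur product theorem, exponentials of positive definite kernels are positive definite, being power series with nonnegative coefficients. Multiplying by the rank-one factor $f(x_i)f(x_j)$ preserves positive definiteness.

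Next, Bernstein's theorem gives $\psi(s)=\int_{[0,\infty)} e^{-ts}\,\mu(dt)$ with $\mu$ a finite positive measure, finite because $\psi(0)<\infty$. Integrating the previous step against $\mu$ shows that $\psi(d^2)$ is positive definite.

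\textbf{Strictness of (3)}, which I expect to be the main obstacle. Because $\psi$ is non-constant, $\mu$ charges some $t>0$. It therefore suffices to show that $e^{-td^2}$ is strictly positive definite on distinct points. Via (1), this is the Gaussian kernel restricted to distinct points $\varphi(x_i)$ of a Hilbert space; distinctness holds since $d$ is a metric, so $\varphi$ is injective. The points lie in a finite-dimensional subspace $\R^k$, where the Gaussian kernel is strictly positive definite: its Fourier transform is strictly positive, so $\sum c_ic_j e^{-t\norm{y_i-y_j}^2}=\int \abs{\sum c_i e^{i\omega\cdot y_i}}^2\hat g(\omega)\,d\omega>0$ unless all $c_i=0$, by linear independence of distinct characters. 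I would try this Fourier argument first.

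\textbf{(3)$\Rightarrow$(4).} Apply (3) with $\psi(s)=e^{-ts}$. For $\dVec 1^\top\dVec c=0$, the constant term in the expansion cancels, so
\begin{equation*}
0\leq \frac{1}{t}\sum_{i,j} c_ic_j\round{e^{-td^2(x_i,x_j)}-1}\to -\sum_{i,j} c_ic_j d^2(x_i,x_j)\quad\text{as } t\to0 ,
\end{equation*}
which is exactly (4).
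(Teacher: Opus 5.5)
Your proof is correct and complete: the $x_0$-centred Gram kernel with its RKHS for (4)$\Rightarrow$(2)$\Rightarrow$(1), the Schur-product and Bernstein argument for (3), strictness via the Gaussian kernel on the finite-dimensional span of the embedded points, and the small-$t$ limit for (3)$\Rightarrow$(4) all check out. The paper gives no proof of its own but imports Schoenberg's theorem from \citet[Theorems 6 and 8]{AnderesEtAl_IsotropicCovarianceFunctions_2020}, and your argument is the classical one behind those cited results.
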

    To this article, the implication $(\ref{item:shoenberg_CND}) \implies (\ref{item:shoenberg_CM_is_PD})$ is of particular interest, since the variogram of every square-integrable process on $X$ is conditionally negative definite. As a consequence, Theorem \ref{theo:schoenberg} provides the following useful recipe to build covariance functions on arbitrary sets.
    \begin{tcolorbox}[
        colback=white!95!black,
        colframe=white!50!black,
        title = {Isotropic covariance construction on a set $X$},
        fonttitle = \bfseries,
        width = 0.9\textwidth,
        center
    ]
        \begin{itemize}[leftmargin=*]
            \item Define a square-integrable process $Z: X \to \R$ such that, if $x_1\ne x_2$, then $\Var\round{Z(x_1) - Z(x_2)} > 0$.
            \item Define the distance $d(x_1,x_2) := \sqrt{\Var\round{Z(x_1) - Z(x_2)}}$.
            \item Pick a non-constant completely monotonic function $\psi:[0,+\infty)\to \R$ such that $\psi(0) < +\infty$.
            \item Define $C_Y:X\times X\to\R$ as
            \begin{equation*}
                C_Y(x_1,x_2):=\psi(d^2(x_1,x_2)).
            \end{equation*}
        \end{itemize}
    \end{tcolorbox}
    In the following, whenever $Y$ is the (unique in law) centred Gaussian process built from $Z$ with the above procedure, we will write $Y:=\Sch_\psi(Z)$. We stress that the \emph{separation} requirement ($x_1\ne x_2\implies \Var\round{Z(x_1) - Z(x_2)} > 0$) is needed to guarantee that $d$ separates the space (\ie $d(x_1,x_2) = 0 \implies x_1=x_2$), yet it is not required for the positive definiteness of $\psi(d^2(x_1,x_2))$.
\subsection{Resistance distance and biharmonic distance}
    \label{ssec:resistance_and_biharmonic_distances}
    Effective resistance distance has been widely used and analysed in graph literature \citep[see, for instance,][]{KleinRandic_ResistanceDistance_1993,JorgensenPearse_HilbertSpaceApproach_2010}: it defines the resistance between two nodes as the voltage drop resulting by injecting one Ampere of current in one and withdrawing it from the other. One of its advantages over the shortest-path distance is that it depends on \emph{how many} paths connect the two nodes: the more they are, the lower is the resistance, accordingly to the parallel resistors law. In addition, \cite{AnderesEtAl_IsotropicCovarianceFunctions_2020} have provided an ingenious method to define and compute it between arbitrary points of (a particular class of) metric graphs, not just between vertices. Furthermore, they have shown not only that it is possible to compose the resistance distance with a flexible class of functions to define isotropic stochastic processes indexed on the whole graph (vertices and edges), but also that such a method does not provide valid covariance functions when used with the shortest-path distance.\par
    Biharmonic distance has been introduced on two-dimensional surfaces by \cite{LipmanEtAl_BiharmonicDistance_2010} and belongs to a broader family of spectral distances \citep{PataneSpagnuolo_InteractiveAnalysisHarmonic_2013}. It has been recently specialised to (combinatorial) graphs and proven to be a good measurement of the importance of edges in the general graph topology by \cite{BlackEtAl_BiharmonicDistanceGraphs_2025}. In both cases, its definition involves the inverse second power of the Laplace (continuous or discrete) operator. 
    
\subsection{Differentiability on metric graphs and Kirchhoff conditions}
    Here we provide how the definition of differentiability translates on metric graphs. Indeed, a metric graph is not locally-Euclidean at the vertices having degree not equal to $2$. As a consequence, it is in order to devote special care to the definition of differentiability of functions (and processes) defined over metric graphs. The next definition introduces the concept of \emph{directional derivative}. 
    \begin{definition}[Exiting directional derivative]
        Let $G$ be a metric graph and let $f:G\to\R$ be a function. Let $x\in G$ be a point on $G$: we indicate $E_x$ the set of edges stemming from $x$. This can be formalised as follows: if $x$ is on an edge $(v_1,v_2)$, then $E_x:=\curly{(v_1,x),(x,v_2)}$. If $x$ is a vertex, then $E_x:=\curly{e \in E: \exists v\in V\, s.t. \,e=(x,v)}$. Now, we define the \emph{exiting} directional derivative of $f$ in $x\in G$ among the edge $e\in E_x$ as follows:
        \begin{equation}
            \label{eq:def_directional_derivative}
            \partial_e f(x) := \frac{\partial f(x)}{\partial_{e} x} := \lim_{h\to 0^+} \frac{f(x+he)-f(x)}{h},
        \end{equation}
        where $x+he$, for $h$ sufficiently small, represents the point on $e$ whose (shortest-path) distance from $x$ is $h$.
    \end{definition}
    Clearly, the operator $\partial_e$ can be repeated to define the $j$\textsuperscript{th}-order exiting derivative, denoted $\partial_e^j$ henceforth. We will say that a function belongs to $C^m(e)$ if, for each $x$ on the interior of $e$, the two exiting derivatives up to order $m$ exist finite and match (should be equal for even orders, opposite for odd orders). This is the usual concept of differentiability on $\R$. Next, we introduce the so-called Kirchhoff conditions, which define what differentiability is on the vertices.
    \begin{definition}[Class $C^m_j$ on a metric graph]
        \label{def:class_Cm_on_metric_graph}
        Let $G$ be a metric graph and let $f:G\to \R$ be a (non-random) function. For integers $0\leq j\leq m \leq \infty$, we say that $f\in C_j^m(G)$ if for each $e\in E$, $f|_e \in C^m(e)$; and for each vertex $v \in V$, $f$ satisfies all the \emph{Kirchhoff conditions} up to degree $j$, namely for all $i \in \curly{0,\dots, j}$:
        \begin{itemize}
            \item if $i$ is even, all the exiting derivatives should be equal:
            \begin{equation}
                \label{eq:Kirchhoff_conditions_even}
                \forall e_1,e_2\in E_v, \ \partial_{e_1}^{(i)} f(v) = \partial_{e_2}^{(i)} f(v);
            \end{equation}
            \item if $i$ is odd, the sum of the exiting derivatives should be null:
            \begin{equation}
                \label{eq:Kirchhoff_conditions_odd}
                \sum_{e\in E_v} \partial_e^{(i)} f(v) =0.
            \end{equation}
        \end{itemize}
        We remark that, if $v$ is a vertex of degree 2 (namely, it is actually a point on an edge), then the two Kirchhoff conditions coincide and are equivalent to the classical differentiability (left and right derivatives coincide).
        \par
        Let $Z:G \to \R$ be a stochastic process on $G$. We say that $Z\in C_j^m(G)$ if, for each $e\in E$, $Z|_E$ has continuous $m$-order derivative in the mean-square sense and if, on each $v\in V$, the Kirchhoff conditions (\ref{eq:Kirchhoff_conditions_even}) and (\ref{eq:Kirchhoff_conditions_odd}) hold almost surely for any $i\in\curly{0,...,j}$.
    \end{definition}
    
    In the following, we will mainly use $C_m^m$ and $C_1^m$: for this reason, we will write $C^m := C_m^m$. Furthermore, we will say that $f\in C_j^m(x)$,  if the conditions above hold only at $x\in G$.
    
\section{Polyharmonic resistance distances}
    \label{sec:polyharmonic_distances}
    We are going to define a class of distances of negative type on the graph $G$. To this purpose, we define the (squared) distance as the variogram of a suitable zero-mean Gaussian process, called $Z$ throughout, defined on $G$. \cite{AnderesEtAl_IsotropicCovarianceFunctions_2020} defined $Z$ be a continuous, yet not differentiable, process as the sum of two independent processes on $G$. Afterwards, they have shown \citep[Proposition 5]{AnderesEtAl_IsotropicCovarianceFunctions_2020} that such a process has a very clean RKHS. In this article, we aim to generalise their construction, by building a process $Z$ that has arbitrary differentiability order. However, since the explicit construction of $Z$ will be quite involved, we prefer to define the process $Z$ via its RKHS, and defer its explicit construction in Appendix \ref{app:explicit_construction_Z}. Anyhow, our construction will prove to be very similar to the one in \cite{AnderesEtAl_IsotropicCovarianceFunctions_2020}, yet produce a sufficiently-differentiable process.

    In the following, we will write $\integ{G}{}{f(x)}{x}$ as a shortcut for $\sum_{e\in E} \integ{0}{l_e}{f_e(x)}{x}$.
    \subsection{Definition and basic properties}
    Throughout this manuscript, we will set $m\in\N$ and $p:=m+1$. This double index is motivated by the different interpretations that these numbers have: whilst $m \in \N$ will be the differentiability order of $Z$ (and, as we will show, of $\Sch_\psi(Z)$), $p \in \N^+$ denotes the order of the dot product and a natural candidate for the distance. In the following, $Z$ will have subscript $m$ and $\gamma,d$ will have subscript $p$, unless otherwise stated. With this notation, for instance, $Z_0$ is  related to $\gamma_1,d_1$.
    \begin{definition}
        \label{def:our_RKHS}
        Let $G$ be a metric graph. Let $\mathcal{F}$ the set of functions $f:G\to \R$ such that:
        \begin{itemize}
            \item $f \in C^{m}(G)$,
            \item on each $e\in E$, $f_e^{(m)}$ is absolutely continuous and $f_e^{(m+1)} \in L^2([0, l_e])$,
            \item $\sum_{v\in V} f(v) = 0$.
        \end{itemize}In addition, consider the following quadratic form on $\mathcal{F}$:
        \begin{equation}
            \label{eq:dot_product_RKHS}
            \angled{f,g}_\mathcal{F} := \integ{G}{}{f^{(m+1)}(x) \ g^{(m+1)}(x)}{x}.
        \end{equation}
        We will write $\mathcal H := \round{\mathcal F, \angled{\cdot,\cdot}_{\mathcal F}}$.
    \end{definition}
        
    \begin{propositionrep}
        \label{prop:RKHS_evaluation_functional_continuous}
        $\mathcal{H}$ is a RKHS, that is: it is a Hilbert space and the point-evaluation functional $f \mapsto f(x)$
        is continuous in $\mathcal{H}$.
    \end{propositionrep}
    \begin{proof}[Proof of Proposition \ref{prop:RKHS_evaluation_functional_continuous}]
        Let us show that $\angled{\cdot,\cdot}_{\mathcal F}$ is an inner product. From (\ref{eq:dot_product_RKHS}), it is immediate to show that it is symmetric, bilinear and positive semidefinite. Now, assume that $\angled{f,f}_{\mathcal{F}}=0$, then, by Lemma \ref{lem:f_zero_m+1_integ_implies_constant}, we have that $f$ is constant on $G$, that is $f=0$ (recall that $\sum_v f(v) = 0$). This means that $\angled{\cdot,\cdot}_{\mathcal F}$ is strictly positive definite. \par 
        We will now show that the point-evaluation is continuous. For simplicity, we set $\ell_G:=\sum_{e\in E}l_e>0$. 
        Let $f\in\mathcal F$ and define
        \begin{equation*}
            A_j := \norm{f^{(j)}}_{L^2} = 
            \round{
                \integ{G}{}{
                    \abs{f^{(j)}(t)}^2}{t}
            }^{\frac{1}{2}},
            \qquad j \in \curly{0, ..., p},
        \end{equation*}
        where $f_e^{(0)}=f_e$. In particular,
        $A_{p}=\norm{f}_{\mathcal F}$. Now, for $x_1,x_2 \in G$, let $p\subseteq G$ be a shortest-path from $x_1$ to $x_2$. Then, we have:
        \begin{align*}
            \abs{f(x_1) - f(x_2)} 
            &= \abs{\integ{p}{}{f'(t)}{t}} 
            \leq \integ{p}{}{\abs{f'(t)}}{t} \\
            &\overset{C.S.}{\leq} \round{\integ{p}{}{1}{t}}^\frac{1}{2}\, \round{\integ{p}{}{\abs{f'(t)}^2}{t}}^\frac{1}{2} = \sqrt{d_{SP}(x_1,x_2)}\, A_1,
        \end{align*}
        and, for all $x\in G$,
        \begin{equation}
            \label{eq:proof_RKHS_evaluation_functional_is_continuous_f_bound}
            \abs{f(x)} = \abs{\frac{1}{n}\sum_{v\in V}\round{\vphantom{\big|} f(x) - f(v)}}
            \leq \frac{1}{n} \sum_{v\in V} \sqrt{d_{SP}(x,v)} \, A_1 \leq \sqrt{\ell_G} \, A_1,
        \end{equation}
        which implies that $\abs{f(x)}^2 \leq \ell_G A_1^2$, and therefore
        \begin{equation*}
             \integ{G}{}{\abs{f(t)}^2}{t} \leq \ell_G^2 A_1^2
             \implies 
             A_0 \leq \ell_G A_1.
        \end{equation*}
        Now, for every $j\in\curly{1,...,m}$, using integration by parts and recalling that only the integral part remains (Lemma \ref{lem:int_by_parts_cancels_non-int_term}), we have
        \begin{align*}
            A_j^2 &= \abs{\integ{G}{}{f^{(j-1)}(t) \, f^{(j+1)}(t)}{t}} 
            \leq \integ{G}{}{\abs{f^{(j-1)}(t) \, f^{(j+1)}(t)}}{t} \\
            &\overset{C.S.}{\leq}\round{\integ{G}{}{\abs{f^{(j-1)}(t)}^2}{t}}^\frac{1}{2}\round{\integ{G}{}{\abs{f^{(j+1)}(t)}^2}{t}}^\frac{1}{2} = A_{j-1} A_{j+1}.
        \end{align*}
        Now, by induction we show that, for all $j\in\curly{0,...,m}$, it holds $A_j\leq \ell_G A_{j+1}$. The case $j=0$ has already been shown. If $A_{j-1}\leq \ell_G A_{j}$, then $A_j^2 \leq A_{j-1} A_{j+1} \leq \ell_G A_j A_{j+1}$, thus, dividing by $A_j$ (if $A_j=0$ the inequality is trivial), we obtain $A_{j}\leq \ell_G A_{j+1}$. From this, we have
        \begin{equation}
            \label{eq:proof_RKHS_evaluation_functional_is_continuous_A_bounds}
            A_1 \leq \ell_G A_2 \leq ... \leq \ell_G^m A_{p} = \ell_G^m \norm{f}_{\mathcal F},
        \end{equation}
        which, in concert with (\ref{eq:proof_RKHS_evaluation_functional_is_continuous_f_bound}), yields $\abs{f(x)}\leq \ell_G^{m+\frac{1}{2}} \norm{f}_{\mathcal F}$. Thus, for every $x\in G$, the linear evaluation functional
        $f\mapsto f(x)$ is bounded, hence continuous. \par 
        We are left to show that $\mathcal{H}$ is complete. From \eqref{eq:proof_RKHS_evaluation_functional_is_continuous_A_bounds}, we get that, for each $j \in \curly{0, ..., p}$, it holds
        \begin{equation*}
            \norm{f^{(j)}}_{L^2} \leq \ell_G^{p-j} \norm{f^{(p)}}_{L^2}.
        \end{equation*}
        In particular, if we define the edge-wise Sobolev space
        \begin{equation*}
            W := \bigoplus_{e\in E} H^{p}(0,l_e), \qquad \norm{f}_W^2 := \sum_{j=0}^{p} \norm{f^{(j)}}_{L^2}^2,
        \end{equation*}
        then, for each $f\in\mathcal F$, it holds:
        \begin{equation*}
            \norm{f}_{\mathcal F} \leq \norm{f}_{W} \leq \round{
                \sum_{j=0}^{p} \ell_G^{2(p-j)}
            }^{\frac{1}{2}} \norm{f^{(p)}}_{L^2} = \round{
                \sum_{j=0}^{p} \ell_G^{2j}
            }^{\frac{1}{2}} \norm{f}_{\mathcal F}.
        \end{equation*}
        That is: the two norms $\norm{\cdot}_{\mathcal F}$ and $\norm{\cdot}_{W}$ are equivalent on $\mathcal F$, therefore they induce the same topology. \par 
        Now, we are going to show that $\mathcal{F}$ is a closed subspace of $W$. Assume that $\curly{f_k}_k \subset \mathcal{F}$ and that $f_k \to f$ in $W$ as $k \to +\infty$: we want to show that $f\in\mathcal F$. On each edge $e \in E$, $f_k\big|_e \to f\big|_e$ in $H^{p}(0,l_e)$. Now, on each bounded interval $(0,l_e)$, membership in $H^{p}$ provides a $C^m$ representative whose $m$\textsuperscript{th}-order derivative is absolutely continuous. Indeed, since each weak derivative (up to order $m$) belongs to $H^1(0,l_e)$, it has a representative of the form $u(x) = u(0) + \integ{0}{x}{u'(t)}{t}$. Therefore, $f$ satisfies the edge-wise regularity conditions required by $\mathcal F$. \par 
        We will now show that $f$ satisfies the Kirchhoff conditions at $V$. For $u\in H^1(0,l_e)$ and $x\in \curly{0,l_e}$, the endpoint trace estimate gives
        \begin{equation*}
            \abs{u(x)} \leq l_e^{-1/2} \norm{u}_{L^2(0,l_e)} + l_e^{1/2} \norm{u'}_{L^2(0,l_e)}.
        \end{equation*}
        For each $j\in \curly{0, ..., m}$, we apply this estimate to $u := \round{f_k\big|_e - f_k\big|_e}^{(j)}$. By noticing that $f_k^{(j)}\big|_e \to f^{(j)}\big|_e$ in $L^2(0,l_e)$, we get that $\abs{f_k^{(j)}\big|_e(x) - f^{(j)}\big|_e(x)} \to 0$. Now, since all the Kirchhoff conditions that define $C^m(G)$ are finite and homogeneous linear relations among these endpoint traces, they pass to the limit. In particular, the $0$-order continuity conditions make $f$ a well-defined continuous function on $G$, and
        \begin{equation*}
            \sum_{v\in V} f(v) = \lim_{k\to\infty} \sum_{v\in V} f_k(v) = 0.
        \end{equation*}
        This shows that $f \in \mathcal{F}$, that is: $\mathcal{F}$ is closed in $W$. To conclude, notice that each $H^{p}(0,l_e)$ is a Hilbert space. Therefore, $W = \bigoplus_e H^{p}(0,l_e)$ is complete and so is $\mathcal{F}$ equipped with the (restricted) norm $\norm{\cdot}_W$. Since the two norms are equivalent on $\mathcal{F}$, $\mathcal{F}$ is complete also with respect to $\norm{\cdot}_{\mathcal{F}}$. Thus, any Cauchy sequence in $\mathcal{F}$ is Cauchy in $W$, admits a limit in $W$ and it belongs to $\mathcal{F}$, since it is closed. This concludes the proof.
    \end{proof}

    Proposition \ref{prop:RKHS_evaluation_functional_continuous} justifies the following definition.
    \begin{definition}
        \label{def:process_Z}
        Let $m\in\N$, $p:=m+1$ and let $Z = Z_m = Z:G\to\R$ the (unique) zero-mean Gaussian process whose RKHS is $\mathcal{H}$, let $\gamma_p:G\times G\to\R$ denote its variogram and let $d_p:G\times G\to\R$ defined as $d_p:=\sqrt{\gamma_p}$. The distance $d_p$ will be called ``polyharmonic resistance distance of order $p$''. 
    \end{definition}
    
    \begin{propositionrep}
        \label{prop:Z_is_C^m}
        The process $Z_m$ belongs to $C^m(G)$ in the sample-path sense.
    \end{propositionrep}
    \begin{proof}[Proof of Proposition \ref{prop:Z_is_C^m}]
        Comes immediately from the additive construction \eqref{eq:process_Z_additive_definition}, the fact that $Z_\mu$ and $Z_E$ are (independent) sample-path $m$-times differentiable and the equivalence of the two definitions given in Proposition \ref{prop:RKHS_of_Z_is_H}.
    \end{proof}

    \begin{lemmarep}
        \label{lem:separation_property_gamma}
        The variogram $\gamma_p$ separates the space $G$, that is: if $\gamma_p(x_1,x_2) = 0$, then necessarily $x_1 = x_2$.
    \end{lemmarep}
    \begin{proof}[Proof of Lemma \ref{lem:separation_property_gamma}]
        It is sufficient to prove that if $\forall f \in \mathcal{F}$, $f(x_1) = f(x_2)$, then necessarily $x_1 = x_2$. Indeed
        \begin{equation*}
            \gamma_p(x_1,x_2) = 0 \iff \norm{C_Z(x_1, \cdot) - C_Z(x_2, \cdot)}_{\mathcal F}=0,    
        \end{equation*}
        that is $C_Z(x_1, \cdot) = C_Z(x_2, \cdot)$. This, by the reproducing property, implies that $f(x_1) = f(x_2)$ for all $f \in \mathcal{F}$.\par 
        We will show the contrapositive. Assume that $x_1\ne x_2$ and $x_1\not\in V$. Then, there exist an $\varepsilon > 0$ such that $B_\varepsilon(x_1) \cap \round{V \cup \curly{x_2}} = \emptyset$. Consider the ``mollifier-like'' function $f$ on $G$ with radius $\varepsilon$: 
        \begin{equation*}
            f(x) := \begin{cases}
                \exp\round{-\frac{1}{\varepsilon^2-d_{SP}^2(x_1, x)}} \qquad &\text{if $x\in B_\varepsilon(x_1)$},\\
                0 \qquad &\text{if $x\not\in B_\varepsilon(x_1)$}.
            \end{cases}
        \end{equation*}
        Certainly $f(x_1) = e^{-1/\varepsilon^2} > 0 = f(x_2)$ and all the Kirchhoff conditions are satisfied at $V$ (since $f$ is null) and at $\partial B_\varepsilon(x_1)$ (since the mollifier is smooth). Finally, the linear constraint on $V$ is trivially satisfied since $f(V) = \dVec 0$. \par 
        Assume now that $x_1,x_2 \in V$. In this case as well there exist an $\varepsilon > 0$ such that $(B_\varepsilon(x_1) \cup B_\varepsilon(x_2)) \cap V = \curly{x_1,x_2}$ and $B_\varepsilon(x_1) \cap B_\varepsilon(x_2) = \emptyset$. We define $f$ as 
        \begin{equation*}
            f(x) := \begin{cases}
                \exp\round{-\frac{1}{\varepsilon^2-d_{SP}^2(x_1, x)}} \qquad &\text{if $x\in B_\varepsilon(x_1)$},\\
                -\exp\round{-\frac{1}{\varepsilon^2-d_{SP}^2(x_2, x)}} \qquad &\text{if $x\in B_\varepsilon(x_2)$},\\
                0 \qquad &\text{otherwise}.
            \end{cases}
        \end{equation*}
        Clearly, $f(x_1) = -f(x_2) \ne 0$, thus the linear constraint on $V$ is satisfied. In addition, all the Kirchhoff conditions of any order are satisfied at both $x_1,x_2$ (recall that the mollifiers are symmetric around $x_1,x_2$, therefore all the odd derivatives are null). This concludes the proof.
    \end{proof}
    From Theorem \ref{theo:schoenberg}, Theorem \ref{theo:C_1^m_implies_Schoenberg_C_1^m} and Lemma \ref{lem:separation_property_gamma} we immediately get the following result.
    \begin{theorem}
        \label{theo:gamma_induces_metric_and_Schoenberg}
        For each metric graph $G$ and for each $p\in\N^+$, $d_p = \sqrt{\gamma_p}$ is a distance on $G$. In addition, for each non-constant completely monotonic function $\psi:[0,+\infty)\to\R$, the function $C_Y(x_1,x_2):= \psi(d_p^2(x_1,x_2))$ is strictly positive definite on $G\times G$. Finally, if $p \geq 2$ and $\abs{\psi^{(p-1)}(0^+)} < +\infty$, the process $Y$ having covariance $C_Y$ belongs to $C_1^{p-1}(G)$. 
    \end{theorem}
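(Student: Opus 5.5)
The statement splits into three claims, and I would handle them in order using Theorem \ref{theo:schoenberg}, Lemma \ref{lem:separation_property_gamma} and Theorem \ref{theo:C_1^m_implies_Schoenberg_C_1^m}.

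\textbf{Step 1: $d_p$ is a metric.} Since $Z$ is a zero-mean Gaussian process with RKHS $\mathcal H$, the map $\varphi(x) := C_Z(x,\cdot)\in\mathcal H$ gives, by the reproducing property,
\begin{equation*}
\gamma_p(x_1,x_2) = C_Z(x_1,x_1)+C_Z(x_2,x_2)-2C_Z(x_1,x_2) = \norm{\varphi(x_1)-\varphi(x_2)}_{\mathcal F}^2 .
\end{equation*}
So $d_p(x_1,x_2)=\norm{\varphi(x_1)-\varphi(x_2)}_{\mathcal F}$ is the pull-back of a Hilbert norm. This gives:
\begin{itemize}
\item nonnegativity and symmetry, trivially;
\item the triangle inequality, from that of $\norm{\cdot}_{\mathcal F}$;
\item the identity of indiscernibles, from Lemma \ref{lem:separation_property_gamma}.
\end{itemize}
Hence $d_p$ is a distance, and statement 1 of Theorem \ref{theo:schoenberg} holds with $\mathcal H$ and $\varphi$ as above.

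\textbf{Step 2: strict positive definiteness.} Take a non-constant completely monotonic $\psi$. A completely monotonic function on $[0,+\infty)$ has $\psi(0)$ finite (possibly after taking $\psi(0)=\psi(0^+)$, which is finite for such functions defined at $0$). Then the implication $(1)\Rightarrow(3)$ of Theorem \ref{theo:schoenberg}, applied to the distance space $(G,d_p)$, gives that $\psi(d_p^2(x_1,x_2))$ is strictly positive definite on $G\times G$.

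Strictness uses that the points are distinct in the metric sense. This is precisely why Step 1, and in particular the separation property, must be settled first.

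\textbf{Step 3: regularity of $Y$.} Assume $p\ge 2$ and $|\psi^{(p-1)}(0^+)|<\infty$. The input process $Z=Z_{p-1}$ lies in $C^{p-1}(G)$ by Proposition \ref{prop:Z_is_C^m}, and in particular in $C^{p-1}_1(G)$. Theorem \ref{theo:C_1^m_implies_Schoenberg_C_1^m}, which I take to state that $\Sch_\psi$ preserves $C_1^m$ membership under the derivative bound on $\psi$ at $0^+$, then yields $Y=\Sch_\psi(Z)\in C_1^{p-1}(G)$.

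\textbf{Main obstacle.} The expected difficulty is checking that the hypotheses of Theorem \ref{theo:C_1^m_implies_Schoenberg_C_1^m} match exactly. Proposition \ref{prop:Z_is_C^m} gives sample-path regularity, whereas Definition \ref{def:class_Cm_on_metric_graph} asks for mean-square regularity on edges together with almost-sure Kirchhoff conditions.

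I would bridge this gap as follows:
\begin{itemize}
\item Gaussianity together with continuity of the derivatives of $C_Z$ gives mean-square differentiability, since $C_Z$ is $C^{m}$ in each variable. I would show this through the RKHS kernel sections, which lie in $\mathcal F$.
\item The almost-sure Kirchhoff conditions are inherited directly from the sample paths.
\end{itemize}
Once these hypotheses are matched, the conclusion follows by direct citation.
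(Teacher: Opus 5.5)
Your proposal is correct and follows the paper's route: the paper gets the theorem immediately from Theorem~\ref{theo:schoenberg}, Lemma~\ref{lem:separation_property_gamma} and Theorem~\ref{theo:C_1^m_implies_Schoenberg_C_1^m}, with Proposition~\ref{prop:Z_is_C^m} implicitly supplying $Z\in C^{p-1}(G)$, which you make explicit. The one soft spot is your bridge from sample-path to mean-square regularity, which the paper does not address: $C_Z$ being $C^m$ separately in each variable does not by itself give mean-square $C^m$ (you need the mixed derivatives $\partial_x^j\partial_y^j C_Z$), but the point closes easily because $Z$ is Gaussian, so almost-sure convergence of the difference quotients of a sample-path $C^m$ process already implies $L^2$ convergence, giving mean-square $C^m$ and the almost-sure Kirchhoff conditions at once.
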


    The Definition of $Z_m$ depends on an explicit linear constraint on the vertices of $G$. However, the variogram $\gamma_p$ and, as a consequence, the distance $d_p=\sqrt{\gamma_p}$ are intrinsic properties of the graph topology. This is formally stated in the next invariance result.
    \begin{propositionrep}[Invariance]
        \label{prop:invariance}
        The variogram $\gamma_p$ is invariant under reordering of the graph vertices $V$, under reorientation of the edges $E$ and under splitting or merging edges via vertices of degree 2. In particular, $\gamma_p$ and $d_p$ are intrinsic properties of the graph topology. 
    \end{propositionrep}
    \begin{proof}[Proof of Proposition \ref{prop:invariance}]
        The reordering and reorientation properties come almost immediately from Definition \ref{def:process_Z}. Indeed, it is sufficient to show that the RKHS of $Z$ is invariant under reordering of vertices and reorientation of edges. Since such operations do not alter the topology of the graph, the space of functions $\mathcal{F}$ will be the same, therefore it is left to show that the dot product is the same as well. (\ref{eq:dot_product_RKHS}) is invariant since, for a given edge $e$, if $f,g$ are reparametrised as 
        $\tilde{f}_e(x) = f_e(l_e - x)$ and $\tilde{g}_e(x) = g_e(l_e - x)$, then
        \begin{align*}
            &\integ{0}{l_e}{\tilde{f}_e^{(p)}(x) \ \tilde{g}_e^{(p)}(x)}{x} = \integ{0}{l_e}{\frac{\text{d}^{p}}{\text{d}x^{p}} f_e(l_e - x) \ \frac{\text{d}^{p}}{\text{d}x^{p}} g_e(l_e - x)}{x}\\
            &=\integ{0}{l_e}{(-1)^{p} f_e^{(p)}(l_e - x) \  (-1)^{p} g_e^{(p)}(l_e - x)}{x}\\
            &= \integ{0}{l_e}{ f_e^{(p)}(l_e - x) \   g_e^{(p)}(l_e - x)}{x}\\
            & = \integ{l_e}{0}{(-1) f_e^{(p)}(y) \   g_e^{(p)}(y)}{y} = \integ{0}{l_e}{ f_e^{(p)}(y) \   g_e^{(p)}(y)}{y},
        \end{align*}
        where we changed the variable $y:= l_e - x$. \par

        Regarding the degree-2 splitting, consider $G'$ obtained by splitting an edge $e$ of $G$ at an internal point $s$. If we show that the variogram is invariant in this case, then it will be invariant by an arbitrary composition of splitting operations. That is: $G'=(V',E')$, where $V' = V\cup\curly{s}$, $E'=E\setminus\curly{e} \cup\curly{e_1,e_2}$, and $l_{e_1} + l_{e_2} = l_e$. We set $n:=\abs{V}$, so that $n+1=\abs{V'}$. Notice that the spaces of unnormalised functions $\mathcal{F}_{G}^\star$ and $\mathcal{F}_{G'}^\star$ (defined in Definition \ref{def:our_RKHS}, but without the constraint $\sum_v f(v) = 0$) coincide. Indeed, at $s$, the degree-$m$ vertex conditions reduce to matching derivatives up to order $m$. This also ensures that the two absolutely continuous $m$-order derivatives concatenate to an absolutely continuous function on $e$. In addition, from the additivity of the integral (\emph{mutatis mutandis} from \citet[Proof of Proposition 3]{AnderesEtAl_IsotropicCovarianceFunctions_2020}), we obtain that the dot products coincide: $\angled{f,g}_{\mathcal{F^\star}_G}$ = $\angled{f,g}_{\mathcal{F^\star}_{G'}}$. \par 
        However, the constrained spaces $\mathcal{F}_G$ and $\mathcal{F}_{G'}$ are different. Indeed, $V\ne V'$, thus the constraint actually changes the functions. To link them, consider the bijective map $T:\mathcal{F}_G \to \mathcal{F}_{G'}$
        \begin{equation*}
            (Tf)(x) := f(x) - \frac{f(s)}{n+1}, \qquad (T^{-1}g)(x) = g(x) - \frac{1}{n}\sum_{v\in V} g(v).
        \end{equation*}
        Clearly, $T$ is well-defined since 
        \begin{equation*}
            \sum_{v\in V'} (Tf)(v) = \sum_{v\in V} f(v) + f(s) - (n+1)\frac{f(s)}{n+1} = 0.
        \end{equation*}
        Now, as $T$ only shift the functions by a constant, it preserves both the dot product and pointwise differences:
        \begin{equation*}
            \angled{Tf, Tg}_{\mathcal{F}_{G'}} = \angled{f, g}_{\mathcal{F}_{G}}, \qquad (Tf)(x) - (Tf)(y) = f(x) - f(y).
        \end{equation*}
        Therefore, $T$ is a bijective isometry that preserves increments. We now conclude the proof by means of Proposition \ref{prop:variogram_exprs}:
        \begin{align*}
            \gamma_{p, G'}(x_1,x_2) &= \max\curly{\round{g(x_1) - g(x_2)}^2 \ : \ g\in \mathcal{F}_{G'}, \ \norm{g}_{\mathcal{F}_{G'}}=1}\\
            &=\max\curly{\round{(Tf)(x_1) - (Tf)(x_2)}^2 \ : \ f\in \mathcal{F}_{G}, \ \norm{f}_{\mathcal{F}_{G}}=1}\\
            &= \max\curly{\round{f(x_1) - f(x_2)}^2 \ : \ f\in \mathcal{F}_{G}, \ \norm{f}_{\mathcal{F}_{G}}=1}\\
            &=\gamma_{p,G}(x_1,x_2).
        \end{align*}
    \end{proof}

    The next result provides the connection between our generalisation, the original construction in \cite{AnderesEtAl_IsotropicCovarianceFunctions_2020} and the biharmonic distance.
    \begin{propositionrep}
        \label{prop:m=0_resistance_Anderes}
        On any metric graph $G$, $\gamma_1:G\times G \to \R$ is the resistance distance on $G$ as defined in \citet{AnderesEtAl_IsotropicCovarianceFunctions_2020}. Furthermore, $d_2$ is the natural choice for the biharmonic distance on metric graphs.
    \end{propositionrep}
    \begin{proof}[Proof of Proposition \ref{prop:m=0_resistance_Anderes}]
        First, we prove this result for the vertices: let $v_1,v_2 \in V$. Since the process $Z_E$ as in Definition \ref{def:process_Z_E} and the process $Z_e$ as defined in \citet[Equation (14)]{AnderesEtAl_IsotropicCovarianceFunctions_2020} are both null on $V$, we just need to show that the variograms of the processes $Z_\mu$ agree between this article construction and the one in \citeauthor{AnderesEtAl_IsotropicCovarianceFunctions_2020}. First, let us recall from Remark \ref{prop:laplacian_consistency_m=0}, that the Laplacian matrix $\matr L$ we defined coincides with the standard Laplacian (which is the one used in \citeauthor{AnderesEtAl_IsotropicCovarianceFunctions_2020}). From their Equation (13), or from standard literature, \eg \citet[Equation 6]{GhoshEtAl_MinimizingEffectiveResistance_2008}, it is immediate to obtain the following expression for $d_R$ on $V$:
        \begin{equation*}
            d_R(v_1,v_2) = (\dVec \phi_1 - \dVec \phi_2)^\top \matr L_A^{-1} (\dVec \phi_1 - \dVec \phi_2),
        \end{equation*}
        where $\matr L_A$ denotes the modified Laplacian matrix defined in \citet[Equation (8)]{AnderesEtAl_IsotropicCovarianceFunctions_2020}, and where $\dVec \phi_i$ represents the element of the canonical basis of $\R^n$ corresponding to the vertex $v_i$. Now, \citet[Lemma 1 of Supplementary Material]{FilosiEtAl_TemporallyEvolvingGeneralisedNetworks_2024} shows that the variogram of the process $Z_\mu$ is invariant under the rank-1 update $\matr L + \dVec x \dVec x^\top$ as long as $\onen^\top \dVec x \ne 0$. Therefore, the variogram of \citeauthor{AnderesEtAl_IsotropicCovarianceFunctions_2020} (they use $\dVec x := \dVec \phi_i$ for a given $i\in\curly{1,\dots,n}$) is the same as the one built with the singular covariance matrix $\matr L^-$. This  shows that $\gamma_1 = d_R$ on the vertices. \par 
        To show that this holds everywhere on $G$, it is sufficient to invoke the edge-splitting invariance (see Proposition \ref{prop:invariance}). Indeed, if $x_1,x_2 \in G \setminus V$, then the variogram $\gamma_1(x_1,x_2)$ can be computed by splitting the edges containing $x_1,x_2$ at $x_1,x_2$. In this way, in the new graph, $x_1,x_2 \in V$ and by the above argument their distance is the same under the two constructions. This conclude the proof.
    \end{proof}
    Whilst the resistance distance has already been defined in \cite{AnderesEtAl_IsotropicCovarianceFunctions_2020}, no one has, to our knowledge, defined the biharmonic distance on a \emph{metric} graph. However, the spectral definitions in \cite{LipmanEtAl_BiharmonicDistance_2010} and \cite{BlackEtAl_BiharmonicDistanceGraphs_2025} match the spectral representation \eqref{eq:spectral_form_gamma} for $p=2$, providing a sound basis to the second statement of Proposition \ref{prop:m=0_resistance_Anderes}. 
    
    \subsection{Variational and spectral representations}
    
    Next, we state a variational formulation of the variogram $\gamma_p$ and an expression for the covariance of $Z$. 
    \begin{propositionrep}[Variational representations]
        \label{prop:variogram_exprs}
        The following variational representations for the variogram $\gamma_p$ hold (notice the analogy with the classical resistance distance \cite[see, for instance,][Theorem 2.3]{JorgensenPearse_HilbertSpaceApproach_2010}): 
        \begin{align}
            \label{eq:max_representation_variogram}
            \gamma_p(x_1,x_2) &= \max \curly{\round{f(x_1) - f(x_2)}^2 \ : \ f\in \mathcal F, \ \norm{f}_{\mathcal{F}} = 1},\\
            \label{eq:min_representation_variogram}
            & = \round{\min\curly{\norm{f}_{\mathcal F}^2 \ : f\in\mathcal F, \ \abs{f(x_1) - f(x_2)} = 1}}^{-1},
        \end{align}
        where the latter expression holds for $x_1\ne x_2$. In addition, the covariance of $Z_m$ has the following expression:
        \begin{equation}
            \label{eq:C_Z_variogram_representation}
            C_{Z_m}(x_1,x_2) = c + \frac{1}{2n} \sum_{v\in V}\round{\vphantom{\Big|}\gamma_p(v, x_1) + \gamma_p(v,x_2) - \gamma_p(x_1,x_2)},
        \end{equation}
        where $c=-\frac{1}{2n^2}\sum_{v,w\in V} \gamma_p(v,w)$ does not depend on $x_1,x_2$.
    \end{propositionrep}
    \begin{proof}[Proof of Proposition \ref{prop:variogram_exprs}]
        This proof is very close to the proof of \citet[Proposition 5]{AnderesEtAl_IsotropicCovarianceFunctions_2020}. To show, \ref{eq:max_representation_variogram}, we consider $x_1,x_2 \in G$ and $f \in \mathcal{F}$ such that $\norm{f}_\mathcal{F} \leq 1$ and use the Cauchy-Schwarz inequality.
        \begin{align*}
            \round{f(x_1) - f(x_2)}^2 &= \angled{f, C_Z(x_1,\cdot) - C_Z(x_2,\cdot)}_\mathcal{F}^2\\
            &\leq 1 \cdot \norm{C_Z(x_1,\cdot) - C_Z(x_2,\cdot)}_\mathcal{F}^2 \\
            &= \angled{C_Z(x_1,\cdot) - C_Z(x_2,\cdot), C_Z(x_1,\cdot) - C_Z(x_2,\cdot)}_\mathcal{F}\\
            &=C_Z(x_1,x_1) + C_Z(x_2,x_2) - 2 C_Z(x_1,x_2) \\
            &= \gamma_p(x_1,x_2)
        \end{align*}
        In addition, for $x_1\ne x_2$, the function
        \begin{equation*}
            f_0(x) := \frac{C_Z(x, x_1) - C_Z(x, x_2)}{\norm{C_Z(\cdot, x_1) - C_Z(\cdot, x_2)}_\mathcal{F}}
        \end{equation*}
        has clearly norm $1$ and satisfies
        \begin{align*}
            \round{f_0(x_1) - f_0(x_2)}^2 &= \frac{\round{C_Z(x_1, x_1) - C_Z(x_1, x_2) - C_Z(x_2,x_1) + C_Z(x_2,x_2)}^2}{\norm{C_Z(\cdot, x_1) - C_Z(\cdot, x_2)}_\mathcal{F}^2} \\
            &= \frac{\gamma_p^2(x_1,x_2)}{\angled{C_Z(\cdot, x_1) - C_Z(\cdot, x_2), C_Z(\cdot, x_1) - C_Z(\cdot, x_2)}_\mathcal{F}}\\
            &= \frac{\gamma_p^2(x_1,x_2)}{\gamma_p(x_1,x_2)} = \gamma_p(x_1,x_2).
        \end{align*}
        This shows (\ref{eq:max_representation_variogram}). To show (\ref{eq:min_representation_variogram}), we start from the just-shown result. This writes:
        \begin{align*}
            \gamma_p(x_1,x_2)&=\max \curly{\round{f(x_1) - f(x_2)}^2 \ : \ f\in \mathcal F, \ \norm{f}_{\mathcal{F}} = 1}\\
            &=\max\curly{\frac{(f(x_1) - f(x_2))^2}{\norm{f}_{\mathcal{F}}^2} \ : \ f\in \mathcal F, \ \norm{f}_{\mathcal{F}} \ne 0}\\
            &= 1 \bigg/ \min \curly{\frac{\norm{f}_{\mathcal{F}}^2}{(f(x_1) - f(x_2))^2} \ : \ f\in \mathcal F, \ \norm{f}_{\mathcal{F}} \ne 0} \\
            &= 1 \bigg / \min \curly{\norm{f}_{\mathcal F}^2 \ : \ f\in\mathcal F, \ (f(x_1) - f(x_2))^2 = 1}\\
            &= 1 \bigg / \min \curly{\norm{f}_{\mathcal F}^2 \ : \ f\in\mathcal F, \ \abs{f(x_1) - f(x_2)} = 1}.
        \end{align*}

        Now we show (\ref{eq:C_Z_variogram_representation}). By expanding the variogram and recalling that $C_Z(x,\cdot)\in \mathcal F$, we obtain
        \begin{equation*}
            \sum_{v \in V} \gamma_p(v,x) = \sum_{v\in V} C_Z(v,v) + n C_Z(x,x) - 2 \overbrace{\sum_{v\in V} C_Z(v,x)}^{=0},
        \end{equation*}
        from which $C_Z(x,x) = \frac{1}{n} \sum_{v\in V} \round{\gamma_p(v,x) - C_Z(v,v)}$. Using this at $x\in\curly{x_1,x_2}$, we have:
        \begin{equation}
            \label{eq:proof_sum_gamma_vertices_function_C}
            \gamma_p(x_1,x_2) = \frac{1}{n} \sum_{v\in V} \round{\vphantom{\Big|}\gamma_p(v,x_1) + \gamma_p(v,x_2) - 2C_Z(v,v)} - 2C_Z(x_1,x_2).
        \end{equation}
        Finally, consider
        \begin{align*}
            \sum_{v_1,v_2\in V} \gamma_p(v_1,v_2) &= \sum_{v_1,v_2} \round{C_Z(v_1,v_1) + C_Z(v_2,v_2)} - 2\sum_{v_1} \overbrace{\sum_{v_2} C_Z(v_1,v_2)}^{=0}\\
            &=2n\sum_{v\in V} C_Z(v,v).
        \end{align*}
        By substituting the expression for $\sum_{v\in V} C_Z(v,v)$ back in (\ref{eq:proof_sum_gamma_vertices_function_C}) and solving in $C_Z(x_1,x_2)$, we obtain (\ref{eq:C_Z_variogram_representation}).
    \end{proof}
    
    Notice that Equation (\ref{eq:max_representation_variogram}), in concert with the norm induced by $\angled{\cdot,\cdot}_{\mathcal F}$, namely
    \begin{equation*}
        \norm{f}_{\mathcal F}^2 =  \integ{G}{}{\round{f^{(p)}(x)}^2}{x},
    \end{equation*}
    provides the following interpretation to our distance. Whilst $\gamma_1$ (proven to be the effective resistance distance) measures how easy is to produce a increment $\abs{f(x_1)-f(x_2)}$ between two points for a given slope energy $\norm{f}_{\mathcal F}$, for $p\geq 2$, $\gamma_p$ measures how easy is to produce the same increment, yet for a given higher-order ``smooth'' energy. \par
    
    \begin{remark}
        \label{rem:Kirchhoff-Laplace_operator}
        In the next Proposition, we are making use of the Kirchhoff-Laplace operator $L$ on $G$ (we recall that $G$ is a finite, connected metric graph), defined as $Lf_e := -f_e''$ on each edge $e$ and with the Kirchhoff conditions of order $0,1$ at the vertices: $f$ must be continuous and the sum of exiting derivatives must be null. The natural domain of $L$ is the Sobolev space $H^{2}$ on $G$, namely the set of function that edge-wise belong to $H^2(e)$, with the Kirchhoff conditions above. We stress that the space $\mathcal F$ with $m=1$ coincides with the space $H^2$ constrained to have null sum over the vertices, that is:
        \begin{equation*}
            \mathcal{F}_{m=1} = \curly{f \in H^2(G) \ : \ \sum_{v\in V} f(v) = 0}.
        \end{equation*}
        We remark that $L$ is an unbounded, non-negative, self-adjoint operator with compact resolvent $(I+L)^{-1}$ \citep[Chapter 3]{BerkolaikoKuchment_IntroductionQuantumGraphs_2012}. As a consequence there exist a real orthonormal basis $\curly{\phi_k}_{k=0}^{\infty}$ of $L^2(G)$ such that 
        \begin{equation*}
            L \phi_k = \lambda_k \phi_k, \qquad 0=\lambda_0<\lambda_1 \leq \lambda_2 \leq ..., \qquad \lambda_k \to +\infty,
        \end{equation*}
        where eigenvalues are repeated according to their multiplicity.
    \end{remark}
    
    \begin{propositionrep}[Spectral representations]
        \label{prop:spectral_form_gamma}
        Let $L$ the Kirchhoff-Laplace operator on $G$ as defined in Remark \ref{rem:Kirchhoff-Laplace_operator}. Then, it holds
        \begin{equation}
            \label{eq:spectral_form_gamma}
            \gamma_p(x_1,x_2) = \sum_{k=1}^{\infty} \frac{\round{\phi_k(x_1) - \phi_k(x_2)}^2}{\lambda_k^{p}},
        \end{equation}
        where the sum is taken over all the $k\geq 1$ to exclude the constant eigenfunction $\phi_0$ (with null eigenvalue). In addition, by setting $\ol \phi_{k,V}:= \frac{1}{n} \sum_{v \in V} \phi_k(v)$, $C_Z$ admits the following expression:
        \begin{equation}
            \label{eq:spectral_form_C_Z}
            C_Z(x_1,x_2) = \sum_{k=1}^{\infty} \frac{
                \round{\phi_k(x_1) - \ol \phi_{k,V}}
                \round{\phi_k(x_2) - \ol \phi_{k,V}}
            }{
                \lambda_k^{p}
            }.
        \end{equation}
        
        Finally, $Z$ admits the following spectral representation:
        \begin{equation}
            \label{eq:spectral_form_Z}
            Z(x) = \sum_{k=1}^\infty \xi_k \round{\phi_k(x) - \ol \phi_{k,V}}, \qquad \xi_k \overset{\text{ind.}}{\sim} \Norm{0}{\frac{1}{\lambda_k^{p}}},
        \end{equation}
        where the convergence holds in the mean-square sense for every $x\in G$. All the expressions do not depend on the choice of the orthonormal basis within each eigenspace.
    \end{propositionrep}
    \begin{proof}[Proof of Proposition \ref{prop:spectral_form_gamma}]
        We have that $\curly{(\lambda_k, \phi_k)}_k$ forms an orthonormal basis of $L^2$ with respect to its standard dot product $\angled{f,g}_{L^2} = \int_{G}{fg}$. Now, for $k \geq 1$, we define 
        \begin{equation*}
            \psi_k(x) := \phi_k(x) - \ol \phi_{k,V}, \qquad \ol \phi_{k,V}:= \frac{1}{n} \sum_{v \in V} \phi_k(v), 
        \end{equation*}
        and set $\mathcal{F}_N := \vspan\curly{\psi_1, ..., \psi_N}\subset\mathcal F$. These eigenfunctions inherit the Kirchhoff conditions from the $\phi_k$'s, and satisfy them at every order. Indeed, on each edge, $\phi_k'' = -\lambda_k \phi_k$, therefore the higher derivatives are multiples of $\phi_k$ and $\phi_k'$. \par 
        Let $f_N\in \mathcal F_N$, then 
        \begin{equation*}
            f_N(x) = \sum_{k=1}^N a_k \psi_k(x) = c_N + \sum_{k=1}^N a_k \phi_k(x).
        \end{equation*} 
        Now, Lemma \ref{lem:our_dot_to_L2} gives:
        \begin{equation*}
            \norm{f_N}_{\mathcal{F}}^2 = \angled{f_N,f_N}_{\mathcal{F}} = \angled{L^{p}f_N,f_N}_{L^2} = \sum_{k=1}^N \lambda_k^{p} a_k^2,
        \end{equation*}
        as the constant term $c_N$ has no contribution since $L c_N = 0$. \par
        We now want to show that $\bigcup_{N=1}^{\infty} \mathcal{F}_N$ is dense in $\mathcal F$. Assume that $f\in \mathcal F$ is orthogonal to every $\psi_k$, then, using again Lemma \ref{lem:our_dot_to_L2}, we have:
        \begin{align*}
            0 = \angled{f, \psi_k}_{\mathcal F} = \angled{f, L^{p}\phi_k}_{L^2} = \lambda^{p} \angled{f, \phi_k}_{L^2},
        \end{align*}
        \ie $\angled{f, \phi_k}_{L^2} = 0$ for every $k\geq 1$. This, by completeness of the basis $\curly{\phi_k\ : \ k\in \N}$ in $L^2$, implies that $f = a_0 \phi_0$, namely that $f$ is constant on $G$. However, the vertex-sum constraint implies that $f=0$. This shows that $\bigcup_{N=1}^{\infty} \mathcal{F}_N$ is dense in $\mathcal F$. \par 
        Consider now $x_1,x_2\in G$ and define $d_k := \phi_k(x_1) - \phi_k(x_2)$. As constants cancel in differences, if $f_N = \sum_{k=1}^N a_k \psi_k$, then
        \begin{align*}
            \round{f_N(x_1) - f_N(x_2)}^2 &= \round{\sum_{k=1}^N a_k d_k}^2 = \round{\sum_{k=1}^N \round{a_k \lambda_k^{\frac{p}{2}}} \frac{d_k}{\lambda_k^{\frac{p}{2}}}}^2\\
            &\overset{C.S.}{\leq} \round{
                \sum_{k=1}^N a_k^2 \lambda_k^{p}
            } \round{
                \sum_{k=1}^N \frac{d_k^2}{ \lambda_k^{p}}
            } = \norm{f_N}_{\mathcal F}^2 \sum_{k=1}^N \frac{d_k^2}{ \lambda_k^{p}},
        \end{align*}
        and equality is attained by choosing $a_k := d_k / \lambda_k^{p}$. This means that 
        \begin{equation*}
            \sup \curly{
                \frac{
                    \round{f(x_1) - f(x_2)}^2
                }{
                    \norm{f}_{\mathcal F}^2
                } \ : \ f \in \mathcal F_N, \ f\ne 0
            } = \sum_{k=1}^N \frac{d_k^2}{ \lambda_k^{p}}.
        \end{equation*}
        Now, by the density of $\bigcup_N \mathcal F_N$ in $\mathcal F$ and using Proposition \ref{prop:variogram_exprs}, we conclude:
        \begin{align*}
            \gamma_p(x_1,x_2) &= \sup_{N\geq 1} \sup \curly{
                \frac{
                    \round{f(x_1) - f(x_2)}^2
                }{
                    \norm{f}_{\mathcal F}^2
                } \ : \ f \in \mathcal F_N, \ f\ne 0
            }\\
            &= \sup_{N\geq 1} \sum_{k=1}^N \frac{d_k^2}{ \lambda_k^{p}} = \sum_{k=1}^{\infty} \frac{(\phi(x_1) - \phi(x_2))^2}{\lambda_k^{p}},
        \end{align*}
        that is (\ref{eq:spectral_form_gamma}).\par
        To show (\ref{eq:spectral_form_C_Z}), we are going to use (\ref{eq:spectral_form_gamma}) in concert with (\ref{eq:C_Z_variogram_representation}). Let $\psi_k := \phi_k - \ol \phi_{k,V}$. Then $\sum_{v\in V} \psi_k(v) = 0$ and (\ref{eq:spectral_form_gamma}) can be rewritten using $\psi_k$ in place of $\phi_k$ (constants cancel out). Now, we apply (\ref{eq:C_Z_variogram_representation}). First, let us compute the term $c$ therein:
        \begin{align*}
            c &= -\frac{1}{2n^2} \sum_{v,w\in V} \sum_k \frac{(\psi_k(v) - \psi_k(w))^2}{\lambda_k^{p}}\\
            &= -\frac{1}{2n^2} \sum_k \frac{1}{\lambda_k^{p}} \sum_{v,w\in V} \round{\vphantom{\big|}\psi_k^2(v) + \psi_k^2(w) - 2\psi_k(v)\psi_k(w)}\\
            &=-\frac{1}{2n^2} \sum_k \frac{1}{\lambda_k^{p}} \sum_{v\in V} \round{
                n \psi_k^2(v) + \sum_{w\in V} \psi_k^2(w) - 2\psi_k(v) \overbrace{\sum_{w\in V} \psi_k(w)}^{=0}
            } \\
            &= -\frac{1}{n} \sum_k \frac{1}{\lambda_k^{p}} \sum_{v\in V} \psi_k^2(v).
        \end{align*}
        The other addend in (\ref{eq:C_Z_variogram_representation}) is
        \begin{align*}
            &\frac{1}{2n} \sum_{v\in V} \round{\vphantom{\big|} \gamma_p(x_1, v) + \gamma_p(x_2,v) - \gamma_p(x_1,x_2)}\\
            &= \frac{1}{2n} \sum_k \frac{1}{\lambda_k^{p}} \sum_v \round{
                2\psi_k^2(v) - 2\psi_k(x_1)\psi_k(v) - 2\psi_k(x_2)\psi_k(v) + 2\psi_k(x_1)\psi_k(x_2) 
            }\\
            &=\sum_k \frac{1}{\lambda_k^{p}}
            \round{
                \psi_k(x_1)\psi_k(x_2) + \frac{1}{n} \sum_v \psi_k^2(v) 
            }.
        \end{align*}
        By summing these last two expressions, we get (\ref{eq:spectral_form_C_Z}). Finally, starting from (\ref{eq:spectral_form_Z}) and computing the covariance $C_Z(x_1,x_2)$ and using the independence of the $
        \xi_k$'s, one immediately gets (\ref{eq:spectral_form_C_Z}). 
    \end{proof}
    
    \begin{remark}
        The limit for $\gamma_p$ in Proposition \ref{prop:spectral_form_gamma} also sheds light on an interesting fact: if $K$ is the multiplicity of the smallest positive eigenvalue, we have
        \begin{equation}
            \lim_{p\to +\infty} \lambda_1^{p} \gamma_p(x_1,x_2) = \sum_{k=1}^K\round{\phi_k(x_1) - \phi_k(x_2)}^2.
        \end{equation}
        In particular, if $K=1$, then the (scaled) limit $\gamma_\infty$ is
        \begin{equation*}
            \gamma_\infty(x_1,x_2) = (\phi_1(x_1) - \phi_1(x_2))^2.
        \end{equation*}
        Now, if $G$ is not a segment, since $\phi_1$ is continuous on $G$, there will be different points $x_1\ne x_2$ with $\phi_1(x_1) = \phi_1(x_2)$. This means that the (scaled) limit distance loses the separation property.
    \end{remark}
    
    Notice how the high-order distances are closely related to the eigenspace corresponding to the smallest non-zero eigenvalue of the Laplacian on $G$. We also remark that the spectrum of a quantum graph is ``usually'' simple \citep[see][Theorem 3.6]{BerkolaikoLiu_SimplicityEigenvaluesNonvanishing_2017}.
    
    \subsection{Minor properties}

    Next, we provide some insights on the variogram $\gamma_p$. Indeed, whilst the case $p=1$ is well studied and has an intuitive physical interpretation, the generalisation given here may appear opaque. However, some nice properties of the resistance distance continue to hold for a general $p$.
    
    \begin{remark}
        Let $G=(V,E)$ be a metric graph and let $G'$ be a modification of $G$ obtained by adding an edge to $G$. More precisely, we have $G'=(V,E')$, where $E' = E \cup \curly{e'}$. A notable feature of the effective resistance distance is that the resistance on the new graph $G'$ is not greater than the one on $G$. However, this is no longer true for $m \geq 1$: indeed, adding a new edge can alter the Kirchhoff conditions in the junction points, resulting in a higher, lower or equal resistance.
    \end{remark}

    \begin{propositionrep}
        \label{prop:scaling}
        Let $G$ be a metric graph and let $G':=\alpha G$, meaning that the sets of vertices $V$ and edges $E$ remain the same, but each edge in $G'$ is scaled by the same factor $\alpha > 0$. Then, it holds:
        \begin{equation}
            \label{eq:scaling}
            \gamma_{p,G'}(\alpha x_1, \alpha x_2) = \alpha^{2p-1} \ \gamma_{p,G}(x_1,x_2) \qquad \forall x_1,x_2\in G,
        \end{equation}
        where $\alpha x_i$ indicates the point on $G'$ corresponding to $x_i$ in $G$.
    \end{propositionrep}
    \begin{proof}[Proof of Proposition \ref{prop:scaling}]
        We are going to make use of (\ref{eq:max_representation_variogram}) in the two graphs $G$ and $G'$. First, let $f_G \in \mathcal F_{G}$ and consider its analogous $f_{G'}(\alpha x) = f_G(x)$. Then, we have:
        \begin{align*}
            \norm{f_{G'}}_\mathcal{F_{G'}}^2 &= \sum_{e \in E} \integ{0}{\alpha l_e}{\round{f_{G'}^{(p)}(x)}^2}{x} = \sum_{e \in E} \integ{0}{\alpha l_e}{\round{D^{p}f_{G}\round{\frac{x}{\alpha}}}^2}{x}\\
            &=\sum_{e \in E} \integ{0}{\alpha l_e}{\round{\frac{1}{\alpha^{p}}f_{G}^{(p)}\round{\frac{x}{\alpha}}}^2}{x}\\
            &= \frac{1}{\alpha^{2p}} \sum_{e \in E} \integ{0}{l_e}{\round{f_G^{p}(y)}^2}{(\alpha y)} = \frac{1}{\alpha^{2p-1}} \norm{f_G}_{\mathcal{F_G}}^2.
        \end{align*}
        As a consequence, we get:
        \begin{align*}
            \gamma_{p,G'}(\alpha x_1, \alpha x_2) &= \max\curly{\frac{(f_{G'}(\alpha x_1) - f_{G'}(\alpha x_2))^2}{\norm{f_{G'}}_{\mathcal{F}_{G'}}^2} \ : \ \norm{f_{G'}}_{\mathcal{F}_{G'}} \ne 0}\\
            &=\max\curly{\alpha^{2p-1} \ \frac{(f_{G}(x_1) - f_{G}(x_2))^2}{\norm{f_{G}}_{\mathcal{F}_{G}}^2} \ : \ \norm{f_{G}}_{\mathcal{F}_{G}} \ne 0} \\
            &= \alpha^{2p-1} \ \gamma_{p,G}(x_1,x_2).
        \end{align*}
    \end{proof}

    \begin{propositionrep}
        \label{prop:gamma_belongs_to_C^m}
        If $p \geq 2$, for any $x_0 \in G$, the function $y \mapsto \gamma_p(x_0, y)$ belongs to $C_1^{p-1}(G)$. 
    \end{propositionrep}
    \begin{proof}[Proof of Proposition \ref{prop:gamma_belongs_to_C^m}]
        Let us show that, on the interior of the edges, $\gamma_p \in C^m$. Since $Z\in C^m$, for each $x_0\in G$, the function $y\mapsto Z(y) - Z(x_0)$ is a $C^m$ map in $L^2(\Omega)$. As a consequence, its squared $L^2$-norm (that is: $\gamma_p(x_0,y)$) is a $C^m$ function.\par 
        Assume now that $y\in V$: we have
        \begin{align*}
            \sum_{e\in E_y} \frac{\partial \gamma_p(x_0, y)}{\partial_e y} &= \sum_{e\in E_y} \frac{\partial}{\partial_e y} \Exp{Z(y) - Z(x_0)}^2 \\
            &=\sum_{e\in E_y} 2\, \Exp{\vphantom{\Big|}\round{Z(y) - Z(x_0)} \partial_e Z(y)} = 0,
        \end{align*}
        since, being $Z$ continuous at $y$, $Z(y) - Z(x_0)$ does not depend on the edge $e \in E_y$. Thus, the Kirchhoff condition of order $1$ holds.
    \end{proof}
    Proposition \ref{prop:gamma_belongs_to_C^m} in particular implies that, for $p\geq 2$, $\gamma_p(x_0,y)$ is flat whenever $y$ is a leaf, and, if $y$ is a branching point, then the sum of exiting derivatives of $\gamma_p(x_0,y)$ is null. This, in concert with the fact that the distance $\gamma_p$ cannot be negative, immediately yields the following corollary.
    
    \begin{corollary}
        \label{cor:flatness_gamma_at_x=x_0_and_leaves}
        Let $x_0\in G$ and $p\geq 2$. Then $\gamma_p(x_0,x)$ is flat at $x=x_0$ and at $x=x_{\text{leaf}}$, where $x_{\text{leaf}}$ is a leaf of $G$. More precisely, for each $e\in E_{x_0}$, it holds:
        \begin{equation*}
            \frac{\partial \gamma_p(x_0,x)}{\partial_e x} \Big|_{x=x_0} = 0,
        \end{equation*}
        and, for $e \in E_{x_\text{leaf}}$: 
        \begin{equation*}
            \frac{\partial \gamma_p(x_0,x)}{\partial_e x} \Big|_{x=x_\text{leaf}} = 0.
        \end{equation*}
    \end{corollary}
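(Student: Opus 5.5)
We need to prove Corollary \ref{cor:flatness_gamma_at_x=x_0_and_leaves}, using Proposition \ref{prop:gamma_belongs_to_C^m} together with the non-negativity of $\gamma_p$. Throughout, write $h(x):=\gamma_p(x_0,x)$ and fix $p\geq 2$. By Proposition \ref{prop:gamma_belongs_to_C^m}, $h\in C_1^{p-1}(G)$. Since $p-1\geq 1$, along each edge $h$ has a continuous first derivative, and every exiting derivative $\partial_e h(y)$ exists at every $y\in G$. At vertices, $h$ satisfies the order-$1$ Kirchhoff condition $\sum_{e\in E_y}\partial_e h(y)=0$.

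\textbf{Flatness at $x_0$.} By definition, $h\geq 0$ on $G$ and $h(x_0)=\gamma_p(x_0,x_0)=0$, so $x_0$ is a global minimum of $h$. For each $e\in E_{x_0}$, the exiting derivative is
\[
\partial_e h(x_0)=\lim_{t\to0^+}\frac{h(x_0+te)-h(x_0)}{t},
\]
and every difference quotient in this limit is $\geq 0$, hence $\partial_e h(x_0)\geq 0$. We now split into two cases.

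If $x_0\in V$, the order-$1$ Kirchhoff condition gives $\sum_{e\in E_{x_0}}\partial_e h(x_0)=0$. A sum of non-negative terms vanishes only if each term vanishes, so $\partial_e h(x_0)=0$ for every $e\in E_{x_0}$.

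If $x_0$ lies in the interior of an edge, then $E_{x_0}$ consists of the two half-edges $(v_1,x_0)$ and $(x_0,v_2)$. Interior $C^1$ regularity forces the two exiting derivatives to be opposite, and both are $\geq 0$, so both vanish. Equivalently, one can treat $x_0$ as a degree-$2$ vertex, which Proposition \ref{prop:invariance} permits without changing $\gamma_p$, and reuse the vertex argument.

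\textbf{Flatness at a leaf.} Let $x_{\text{leaf}}$ be a vertex of degree one, so that $E_{x_{\text{leaf}}}=\{e\}$ is a single edge. The order-$1$ Kirchhoff condition at this vertex reduces to the single-term sum $\partial_e h(x_{\text{leaf}})=0$, which is exactly the claim. No sign argument is needed here.

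\textbf{Main point requiring care.} The statement itself is routine once Proposition \ref{prop:gamma_belongs_to_C^m} is available. The only delicate issue is that the Kirchhoff condition in that proposition must hold at every vertex, including the point $x_0$ itself and the leaves. The proposition's argument does cover these cases. At $y=x_0$ it gives $\sum_e 2\,\mathbb{E}[(Z(y)-Z(x_0))\partial_e Z(y)]=0$, which holds trivially because $Z(y)-Z(x_0)=0$. At a leaf, the sum has a single term, and that term vanishes because $Z(y)-Z(x_0)$ does not depend on the choice of edge $e\in E_y$ and $\sum_e\partial_e Z(y)=0$ holds almost surely, since $Z\in C^m(G)$ with $m\geq1$ by Proposition \ref{prop:Z_is_C^m}.
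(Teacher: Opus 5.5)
Your proof is correct and follows the paper's own route. The paper combines the order-$1$ Kirchhoff condition for $\gamma_p(x_0,\cdot)$ from Proposition~\ref{prop:gamma_belongs_to_C^m} with $\gamma_p\geq 0$ and $\gamma_p(x_0,x_0)=0$, which force every non-negative exiting derivative at $x_0$ to vanish, and uses the single-term Kirchhoff sum at a leaf; your treatment of interior $x_0$ and your check that the condition holds at $x_0$ and at leaves only make explicit what the paper leaves implicit.
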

    
    \begin{propositionrep}
        \label{prop:natural_distance_inequality_fails}
        Let $p\geq 1$. Then, in general, it is \textbf{not} true that, if $S\subset G$ separates $x_1$ from $x_2$, then 
        \begin{equation}
            \label{eq:natural_distance_inequality_fails}
            \gamma_p(x_1,x_2) \geq \gamma_p(x_1, S).
        \end{equation}
        As a counterexample, consider the mandarin graph with $N\geq 2$ segments (see Figure \ref{fig:natural_distance_counterexample}), where all $N$ paths $x_1\to x_2$ have length $l$ and $s_e$ is halfway on the path $x_1\to x_2$ via $e$. By letting $S:=\curly{s_e \ : \ e\in E}$, we have that, for any $N\geq 4$ and any $m\in\N$, $\gamma_p(x_1,x_2) < \min_{s\in S} \gamma_p(x_1, s)$.
    \end{propositionrep}
    \begin{figure}[ht]
        \centering
        \begin{tikzpicture}[scale = 1]
            \draw[fill = gray!20] (2,3/2) ellipse [x radius= 1/4, y radius= 2];
            
            \coordinate (v1) at (0,3/2) {};
            \coordinate (v2) at (4,3/2) {};
            \coordinate (v3) at (2,0) {};
            \coordinate (v4) at (2,1) {};
            \coordinate (v5) at (2,2) {};
            \coordinate (v6) at (2,3) {};
            
            \fill (v1) circle (2pt);
            \fill (v2) circle (2pt);
            \fill (v3) circle (2pt);
            \fill (v4) circle (2pt);
            \fill (v5) circle (2pt);
            \fill (v6) circle (2pt);
            
            \draw (v1) -- (v3);
            \draw (v1) -- (v4);
            \draw (v1) -- (v5);
            \draw (v1) -- (v6);
            \draw (v2) -- (v3);
            \draw (v2) -- (v4);
            \draw (v2) -- (v5);
            \draw (v2) -- (v6);
            
            \node[left] at (v1) {$x_1$};
            \node[right] at (v2) {$x_2$};
            \node[above] at (2,3.5) {$S$};
        \end{tikzpicture}
    
        \caption{The mandarin graph with $N=4$ segments: an example of graph for which the distance inequality (\ref{eq:natural_distance_inequality_fails}) fails.}
        \label{fig:natural_distance_counterexample}
    \end{figure}
    \begin{proof}[Proof of Proposition \ref{prop:natural_distance_inequality_fails}]
        We parametrise the $N$ paths $x_1\to x_2$ as $[0,l]$, for $e\in E:=\curly{1,...,N}$: $0,l/2,l$ correspond to $x_1,s_e,x_2$ respectively, where $s_e\in S$ is halfway of the path $x_1\to x_2$ via edge $e$. \par
        Now, on each edge $e\in E$, a generic eigenfunction of the Laplace operator $L$ will be
        \begin{equation*}
            \phi_e(x) = a_e\cos(\omega x) + b_e \sin(\omega x).
        \end{equation*}
        Adding the Kirchhoff conditions, the continuity constraint at $x_1 = 0$ yields $a_e = a, \forall e\in E$, whilst the one at $x_2 = l$ implies $\sin(\omega l) = 0$, namely $\omega = \frac{k\pi}{l}$, for $k\in \Z$. Next, the first-order Kirchhoff conditions at $x_1$ and $x_2$ implies $\sum_{e\in E} b_e = 0$. This means that every function 
        \begin{equation}
            \label{eq:proof_mandarin_generic_eigenfunction_phi}
            \phi_e(x) = a \cos(\omega x) + b_e \sin(\omega x),
        \end{equation}
        with $\sum_e b_e = 0$ and $\omega = \frac{k\pi}{l}$ is an eigenfunction. Now, fix $k\in \N^+$ and consider the eigenspace associated to $\lambda_k=\round{\frac{k\pi}{l}}^2$: by labelling $[a, b_1, ..., b_N]$, it is clear that it is isomorphic to $\R \times \R^N$ where the first component ($a$) is free and the second component ($\dVec b\in\R^N$) satisfies $\dVec 1_N^\top \dVec b = 0$. In addition, if $\phi^{(j)},\phi^{(j')}$ are eigenfunction (\ref{eq:proof_mandarin_generic_eigenfunction_phi}), then
        \begin{align}
            &\nonumber \angled{\phi^{(j)}, \phi^{(j')}}_{L^2}\\
            &\nonumber= \sum_{e\in E} \integ{0}{l}{\round{a^{(j)}\cos(\omega x) + b_e^{(j)} \sin(\omega x)}\round{a^{(j')}\cos(\omega x) + b_e^{(j')} \sin(\omega x)}}{x}\\
            \label{eq:proof_mandarin_dot_product}&=\frac{1}{2} l \round{N a^{(j)} a^{(j')} + \sum_{e\in E} b_e^{(j)}b_e^{(j')}} =: \angled{[a, \dVec b]^{(j)}, [a, \dVec b]^{(j')}}.
        \end{align}
        As a consequence, finding an orthonormal basis of the eigenspace associated to $\lambda_k$ is the same problem as finding an orthonormal basis of $\curly{[a,\dVec b] \in \R \times \R^N \ : \ \dVec 1_N^\top \dVec b = 0}$ with respect to the dot product (\ref{eq:proof_mandarin_dot_product}). It turns out that a possible choice is the following:
        \begin{equation*}
            a^{(j)} = \begin{cases}
                \sqrt{\frac{2}{lN}} \quad&\text{if $j=1$},\\
                0 \quad&\text{if $2 \leq j \leq N$},
            \end{cases} 
            \qquad\quad  
            b_e^{(j)} = \begin{cases}
                0 \quad&\text{if $j=1$},\\
                \sqrt{\frac{2}{lj(j-1)}} \ u_{e,j-1} \quad&\text{if $2 \leq j \leq N$},
            \end{cases}
        \end{equation*}
        where $u_{e,j-1}$ is defined as
        \begin{equation*}
            u_{e,j-1} := \begin{dcases}
                1 \qquad&\text{if $e \leq j-1$},\\
                1-j \qquad&\text{if $e = j$},\\
                0 \qquad&\text{$e \geq j+1$}.
            \end{dcases}
        \end{equation*}
        This provides the following basis of the eigenspace associated to $\lambda_k$:
        \begin{equation*}
            \phi_e^{(j)}(x) = \begin{cases}
                \sqrt{\frac{2}{lN}}  \ \cos\round{\frac{k\pi x}{l}} \quad&\text{if $j=1$}, \\
                \sqrt{\frac{2}{lj(j-1)}} \ u_{e,j-1} \ \sin\round{\frac{k\pi x}{l}} \quad&\text{if $2 \leq j \leq N$}.
            \end{cases}
        \end{equation*}
        We remark that the dot product between eigenfunctions of different eigenspaces is null, as the integral of sines and cosines for different frequencies vanishes. As a consequence, we have found an orthonormal basis of $L^2(G)$. We are now interested in computing $\gamma_p(x_1,x_2)$ and $\gamma_p(x_1,S) = \gamma_p(x_1,s_1)$, where the latter equality follows from the symmetry of $G$. To this purpose, we use Proposition \ref{prop:spectral_form_gamma}.
        \begin{align*}
            \gamma_p&(x_1,x_2)= \sum_{k=1}^\infty \sum_{j=1}^N \frac{\round{\phi_{k,e}^{(j)}(x_1) - \phi_{k,e}^{(j)}(x_2)}^2}{\round{\frac{k\pi}{l}}^{2p}}\\
            &= \frac{l^{2p}}{\pi^{2p}} \sum_{k=1}^\infty \frac{1}{k^{2p}} \round{\frac{2}{lN} \round{\cos(0) - \cos(k\pi)}^2 + \sum_{j=2}^N \frac{2u_{e,j-1}^2}{lj(j-1)}\round{\sin(0) - \sin(k\pi)}^2} \\
            &=\frac{8l^{2p-1}}{N\pi^{2p}} \sum_{k=0}^\infty \frac{1}{(2k+1)^{2p}}\\
            \gamma_p&(x_1,s_1) = \frac{l^{2p}}{\pi^{2p}} \sum_{k=1}^\infty \frac{1}{k^{2p}} \round{\frac{2}{lN} \round{\cos(0) - \cos\round{\frac{k\pi}{2}}}^2 + \sum_{j=2}^N \frac{2u_{e,j-1}^2}{lj(j-1)}\round{\sin(0) - \sin\round{\frac{k\pi}{2}}}^2}\\
            &=\frac{2l^{2p-1}}{N\pi^{2p}} \round{N+\frac{1}{2^{2m}}}\sum_{k=0}^\infty \frac{1}{(2k+1)^{2p}}
        \end{align*}
        As a consequence, we have 
        \begin{equation*}
            \frac{\gamma_p(x_1,S)}{\gamma_p(x_1,x_2)} = \frac{1}{4} \round{N + \frac{1}{2^{2m}}} > 1 \iff N > 4 - \frac{1}{2^{2m}},
        \end{equation*}
        which holds for any $N\geq 4$ and any $m\in \N$. \par
        A couple of final remarks: first, notice the factor $l^{2p-1}$ in both $\gamma_p(x_1,x_2)$ and $\gamma_p(x_1,s_1)$: this is coherent with Proposition \ref{prop:scaling}. Second, as $N\to +\infty$, we have $\gamma_p(x_1,x_2) \to 0$ whilst $\gamma_p(x_1,s_1)$ converges to a positive constant for any $m\in \N$.
    \end{proof}
    
    Although the above result may seem counter-intuitive, it is instead quite natural: the classical resistance distance ($p=1$) is influenced by \emph{how many} paths connect two given points on $G$, with the resistance decreasing as more paths are added or as these paths shorten. The same behaviour is present also for $p\geq 2$, providing an additional, coherent link between the base, well-known case $p=1$ and its extensions. 

\subsection{Relation with the Whittle-Mat\`ern fields}
    \label{ssec:relation_whittle_matern}
    The spectral representation \eqref{eq:spectral_form_Z} closes a gap between the two approaches for the definition of processes on metric graphs. Indeed, the process $Z=Z_m$ can be seen as a centred limit of the Whittle-Mat\`ern fields defined in \cite{BolinEtAl_GaussianWhittleMatern_2024}.
    \begin{propositionrep}[Connection with Whittle-Mat\`ern processes]
        \label{prop:bolin_connection}
        Let $\kappa > 0$ and let $U_{\kappa}$ be the solution to the stochastic differential equation
        \begin{equation}
            \label{eq:bolin_SDE}
            (\kappa^2 I + L)^{p/2} (U_{\kappa}) = \mathcal{W},
        \end{equation}
        where $I$ is the identity operator and $\mathcal W$ is the Gaussian white noise on $G$ \cite[see][Section 3]{BolinEtAl_GaussianWhittleMatern_2024}.
        In addition, let
        \begin{equation}
            U_{\kappa,V}(x) := U_{\kappa}(x) - \frac{1}{n} \sum_{v\in V} U_{\kappa}(v).
        \end{equation}
        Then, as $\kappa \to 0^+$, $U_{\kappa, V}$ converges in finite-dimensional distributions to $Z_m$, that is: for each $N\in \N^+$ and $x_1, ..., x_N \in G$, then 
        \begin{equation*}
            \begin{bmatrix}
                U_{\kappa,V}(x_1) & \dots & U_{\kappa,V}(x_N)
            \end{bmatrix}^\top \overset{\mathcal D}{\longrightarrow} \begin{bmatrix}
                Z_m(x_1) & \dots & Z_m(x_N)
            \end{bmatrix}^\top.
        \end{equation*}
    \end{propositionrep}
    \begin{proof}[Proof of Proposition \ref{prop:bolin_connection}]
        Let $\psi_k := \phi_k - \ol \phi_{k,V}$, as in the proof of Proposition~\ref{prop:spectral_form_gamma}. \citet{BolinEtAl_GaussianWhittleMatern_2024}[Proposition 8] gives the spectral representation for $U_\kappa$ (notice that here $\tau = 1$):
        \begin{equation*}
            U_\kappa(x) = \sum_{k=0}^\infty \frac{\xi_k}{\round{\kappa^2 + \lambda_k}^{p/2}} \phi_k(x),
        \end{equation*}
        where $\xi_k \simiid \Norm{0}{1}$. As a consequence, we have
        \begin{align*}
            U_{\kappa,V}(x) &= \sum_{k=0}^{\infty} \round{
                \frac{\xi_k}{\round{\kappa^2 + \lambda_k}^{p/2}} \phi_k(x) - \frac{1}{n} \sum_{v\in V} \frac{\xi_k}{\round{\kappa^2 + \lambda_k}^{p/2}} \phi_k(v)
            }\\
            &=\sum_{k=0}^\infty  \frac{\xi_k}{\round{\kappa^2 + \lambda_k}^{p/2}}\psi_k(x)\\
            &=\sum_{k=1}^\infty  \frac{\xi_k}{\round{\kappa^2 + \lambda_k}^{p/2}}\psi_k(x).
        \end{align*}
        From which, in concert with \eqref{eq:spectral_form_C_Z}, it is immediate to show that, for all $\kappa>0$,
        \begin{align*}
            C_{U_{\kappa,V}}(x_1,x_2) &= \sum_{k=1}^\infty \frac{\psi_k(x_1) \psi_k(x_2)}{\round{\kappa^2 + \lambda_k}^{p}} \leq \sum_{k=1}^\infty \frac{\abs{\psi_k(x_1) \psi_k(x_2)}}{\lambda_k^{p}}\\
            &\leq \sum_{k=1}^\infty \frac{\psi_k^2(x_1) + \psi_k^2(x_2)}{2 \lambda_k^p} \\
            &= \frac{1}{2}\round{\vphantom{\Big|}C_Z(x_1,x_1) + C_Z(x_2,x_2)}< +\infty.
        \end{align*}
        Hence, by the dominated convergence theorem, we have that $C_{U_{\kappa,V}}(x_1,x_2) \to C_Z(x_1,x_2)$ on $(x_1,x_2) \in G\times G$. Since both $Z$ and $U_{\kappa,V}$ are zero-mean Gaussian processes, the finite-dimensional characteristic function of $U_{\kappa,V}$, $\varphi_{U_{\kappa,V}}(\dVec t)$, converges for any $\dVec t \in \R^N$ to the one of $Z$ as $\kappa\to 0^+$: this proves the convergence in law.
    \end{proof}

\section{Smooth isotropic covariance functions}
\label{sec:smooth_isotropic_covariances}
    In this subsection, we explore some relevant results about the relationship between isotropic processes built via the Schoenberg theorem and their differentiability. First, we start from a general result holding on the real line, then we will delve into processes on metric graph. Throughout this section, for two real functions $f,g$ we will write $f \succeq g$ to denote that $f-g$ is positive semidefinite. In addition, we will say that a function is positive definite if it is either positive semidefinite or strictly positive definite, whilst strict positive definiteness will be explicitly stated.
    \begin{lemmarep}
        \label{lem:differentiability_under_covariance_domination}       
        Let $Z \in C^m(\R)$ and let $Y : \R \to \R$ be a stochastic process. Assume that both $Y$ and $Z$ have $0$-mean. If $C_Z \succeq C_Y$, then $Y \in C^m(\R)$ and, at $x=y=x_0$, it holds:
        \begin{equation}
            \label{eq:differentiability_under_covariance_domination}
            0 \leq \frac{\partial^{2m}}{\partial x^m \partial y^m} C_Y(x,y) \leq \frac{\partial^{2m}}{\partial x^m \partial y^m} C_Z(x,y).
        \end{equation}
    \end{lemmarep}
    \begin{proof}[Proof of Lemma \ref{lem:differentiability_under_covariance_domination}]
        The condition $C_Z \succeq C_Y$ is equivalent to
        \begin{equation}
            \label{eq:proof_differentiability_under_cov_mean_square_inequality}
            \Exp{
                \sum_{i=1}^N a_i Y(x_i) 
            }^2 \leq
            \Exp{
                \sum_{i=1}^N a_i Z(x_i) 
            }^2,
        \end{equation}
        for each $N\in\N^+$, $a_1,...,a_N \in \R$ and $x_1,...,x_N \in \R$. Assuming $m\geq 1$, we will now show that $Y$ has mean-square first derivative. Fix $x_0 \in \R$ and define the incremental ratio $(D_h Z)(x) := \frac{Z(x+h) - Z(x)}{h}$. This implies that
        \begin{equation*}
            D_h Y(x_0) - D_{h'} Y(x_0) = \frac{Y(x_0+h)}{h} - \frac{Y(x_0+h')}{h'} + \round{\frac{1}{h'} - \frac{1}{h}} Y(x_0).
        \end{equation*}
        Applying now (\ref{eq:proof_differentiability_under_cov_mean_square_inequality}) to this equation, we get:
        \begin{align*}
            \Exp{\vphantom{\Big|}D_h Y(x_0) - D_{h'}Y(x_0)}^2 \leq \Exp{\vphantom{\Big|}D_h Z(x_0) - D_{h'}Z(x_0)}^2.
        \end{align*}
        Since $Z \in C^m(\R) \subseteq C^1(\R)$, the right-hand side converges to $0$ as $h,h' \to 0$. As a consequence, $\curly{D_h Y(x_0)}_h$ is a Cauchy sequence in $L^2$: completeness gives the existence of the limit, which is $Y'(x_0)$. \par
        To show that the above argument holds for any $m$, it is sufficient to notice that each derivative of $Y$ can be expressed as a limit of a linear combination of its values (finite difference coefficients). Thus, (\ref{eq:proof_differentiability_under_cov_mean_square_inequality}) continues to apply and the right-hand side goes to $0$ since, by hypothesis, $Z\in C^m$. In addition, (\ref{eq:proof_differentiability_under_cov_mean_square_inequality}) applies on any linear combination of the derivatives of any order (up to $m$). This means that, for each $k\in\curly{1,...,m}$, $C_{Z^{(k)}} \succeq C_{Y^{(k)}}$, namely
        \begin{equation*}
            \frac{\partial^{2k}}{\partial^k x_1 \partial^k x_2} C_Z(x_1, x_2) \succeq 
            \frac{\partial^{2k}}{\partial^k x_1 \partial^k x_2} C_Y(x_1, x_2),
        \end{equation*}
        which, in particular, implies (\ref{eq:differentiability_under_covariance_domination}).
    \end{proof}

    Lemma \ref{lem:differentiability_under_covariance_domination} has a quite natural interpretation. If $Z = Y + X$, where $X,Y$ are independent stochastic processes, then the regularity of $Z$ translates on both $Y$ and $X$. In other words, if we sum a $Y \notin C^m(\R)$ to an arbitrary, yet independent, process, we cannot attain the desired differentiability of the sum.
    
    \begin{theoremrep}
        \label{theo:schoenberg_keeps_diff_R^n}
        Let $Z:\R \to \R$ be a Gaussian zero-mean square-integrable process such that $Z(x_1)=Z(x_2)$ a.s. implies $x_1=x_2$. Let $\psi:[0,+\infty)\to\R$ be a completely monotonic non-constant function. Let $Y := \Sch_{\psi}(Z)$. Then, the following are equivalent:
        \begin{enumerate}
            \item $Y \in C^m(\R)$,
            \item $Z \in C^m(\R)$ and all derivatives of $\psi$ up to order $m$ are finite at $0^+$.
        \end{enumerate}
    \end{theoremrep}
    \begin{proof}[Proof of Theorem \ref{theo:schoenberg_keeps_diff_R^n}]
        First, we consider the case $m=0$. Let us show (1)$\implies$(2). Assume that $Y$ is mean-square continuous at $x_0$ and let $x\in \R$. Then we have
        \begin{equation}
            \label{eq:proof_shoenberg_keeps_diff_m=0_relation_variogram}
            \Exp{Y(x_0) - Y(x)}^2 = 2 \round{\psi(0) - \psi(\gamma_Z(x, x_0))}.
        \end{equation}
        As the left-hand side goes to $0$ as $x\to x_0$, so will the right-hand side, that is $\psi(\gamma_Z(x,x_0)) \to \psi(0)$. Since $\psi$ is completely monotonic and non-constant, it is also strictly decreasing. This means that $\gamma_Z(x,x_0) \to 0$, namely that $Z$ is mean-square continuous at $x_0$. To show (2)$\implies$(1), it is sufficient to notice that, if $Z$ is mean-square continuous at $x_0$, then  \eqref{eq:proof_shoenberg_keeps_diff_m=0_relation_variogram} shows that $Y$ satisfies the same property. This concludes the case $m=0$. \par 
        In the following, $m\geq 1$. Let us show (1)$\implies$(2). We start by showing that $Z$ is $m$-times mean-square differentiable in an arbitrary point $x_0\in\R$. To this aim, we recall that \citep[Theorems 2.2.1, 2.2.2]{Adler_GeometryRandomFields_2010} $Z \in C^m(x_0)$ if and only if the mixed partial derivative $\partial_x^m \partial_y^m C_Z(x,y)$ exists finite in a neighbourhood of $x_0$. If
        \begin{equation*}
            \psi(s) = \integ{0}{+\infty}{e^{-st}}{\mu(t)},
        \end{equation*}
        then, for an interval $[a,b]\subset (0,+\infty)$ such that $\mu([a,b])>0$, we define
        \begin{equation*}
            C_{[a,b]}(x,y) := \integ{a}{b}{e^{-t\gamma_p(x,y)}}{\mu(t)}, \qquad q(x) := C_{[a,b]}(x, x_0) > 0.
        \end{equation*}
        Clearly, $C_{[a,b]}$ and is positive semidefinite over $\R \times \R$ (and so is $C_{[a,b]^C}:= \integ{[a,b]^C}{}{e^{-t\gamma_p(x,y)}}{\mu(t)}$), therefore, since $C_Y - C_{[a,b]} = C_{[a,b]^C} \succeq 0$, by applying Lemma \ref{lem:differentiability_under_covariance_domination}, we get that the process generated by $C_{[a,b]}$ belongs to $C^m(\R)$.\par 
        Now, by Theorem \ref{theo:schoenberg}, we know that there is a Hilbert space embedding $\varphi:\R \to \mathcal H$ such that $\gamma_p(x,y) = \norm{\varphi(x) - \varphi(y)}^2$ (wlog, we will assume that $\varphi(x_0)=0$). This implies that
        \begin{align*}
            &\exp\round{-t\norm{\varphi(x) - \varphi(y)}^2} \\
            &= \exp\round{-t\norm{\varphi(x)}^2} \ \exp\round{-t\norm{\varphi(y)}^2} \ \sum_{i=0}^{+\infty} \frac{(2t)^i}{i!} \angled{\varphi(x), \varphi(y)}^i\\
            &=\sum_{k=0}^{+\infty} \frac{(2t)^k}{k!} \exp\round{-t\norm{\varphi(x)}^2} \ \exp\round{-t\norm{\varphi(y)}^2}\angled{\varphi(x), \varphi(y)}^k.
        \end{align*}
        Therefore, we get 
        \begin{align*}
            &C_Y(x,y) = \integ{0}{+\infty}{\exp\round{-t\norm{\varphi(x) - \varphi(y)}^2}}{\mu(t)}\\
            &= \sum_{k=0}^\infty \overbrace{\integ{0}{+\infty}{\frac{(2t)^k}{k!} \exp\round{-t\norm{\varphi(x)}^2}\exp\round{-t\norm{\varphi(y)}^2}\angled{\varphi(x), \varphi(y)}^k}{\mu(t)}}^{H_k(x,y)},
        \end{align*}
        that is: $C_Y(x,y) = \sum_{k=0}^{\infty} H_k(x,y)$. We will now show that each $H_k$ is a positive definite function over $\R\times \R$: let $N\in \N^+$, $\dVec x\in\R^N$ and consider the $N\times N$ matrix $\matr H_{k,t} = [h_{k,t}(x_i,x_j)]_{i,j=1}^N$, where
        \begin{equation*}
             h_{k,t}(x_i,x_j) := \frac{(2t)^k}{k!} \exp\round{-t\norm{\varphi(x_i)}^2}\exp\round{-t\norm{\varphi(x_j)}^2}\angled{\varphi(x_i), \varphi(x_j)}^k.
        \end{equation*}
        It is clearly possible to express $\matr H_{k,t}$ as 
        \begin{equation*}
            \matr H_{k,t} = \frac{(2t)^k}{k!} \matr D_{t} \, \matr S_{k}^{\circ k} \, \matr D_{t},
        \end{equation*}
        where $\matr D_t$ is the $N\times N$ diagonal matrix whose (positive) entries are $D_{t,ii} = \exp\round{-t\norm{\varphi(x_i)}^2}$, $\matr S_k$ is the $N\times N$ matrix with entries $S_{k,ij} = \angled{\varphi(x_i), \varphi(x_j)}$, and ${}^{\circ k}$ denotes the $k$-th Hadamard matrix power. Now, since the Hadamard product of positive definite matrices is positive definite and since $\matr D_t$ is positive definite as well, we have that $\matr H_{k,t}$ is positive definite for every $N\in \N^+$ and $\dVec x\in\R^N$. In turn, since the integral of positive definite functions is positive definite, this means that $H_k$ is positive definite for any $k\in\N$. We can thus write 
        \begin{equation}
            \label{eq:proof_diff_Schoenberg_C_Y>H_1}
            C_Y(x,y) - H_1(x,y) = \sum_{k\ne 1} H_k(x,y) \succeq 0, \quad \implies \quad  C_Y \succeq H_1.
        \end{equation}
        Define now the covariance function $B$: 
        \begin{align*}
            B(x,y) := q(x) \ q(y) \ \angled{\varphi(x), \varphi(y)}.
        \end{align*}
        We have that, for each $N\in\N^+$, $\dVec x \in \R^N$ and $\dVec \alpha \in \R^N$, 
        \begin{align*}
            &\sum_{i,j=1}^N \alpha_i \alpha_j B(x_i, x_j)\\
            &= \integ{a}{b}{
                \integ{a}{b}{
                    \angled{
                        \overbrace{\sum_{i=1}^N \alpha_i e^{-t \norm{\varphi(x_i)}^2} \varphi(x_i)}^{v(t)}, \
                        \sum_{j=1}^N \alpha_j e^{-s \norm{\varphi(x_j)}^2}\varphi(x_j)
                    }
                }{\mu(s)}
            }{\mu(t)}\\
            &=\angled{\integ{a}{b}{v(t)}{\mu(t)}, \ \integ{a}{b}{v(s)}{\mu(s)}} = \norm{\integ{a}{b}{v(t)}{\mu(t)}}^2\\
            &\leq \round{\integ{a}{b}{\norm{v(t)}}{\mu(t)}}^2 = \round{\integ{a}{b}{\frac{1}{\sqrt{2t}}\, \sqrt{2t}\, \norm{v(t)}}{\mu(t)}}^2\\
            &\overset{C.S.}{\leq}\overbrace{\integ{a}{b}{\frac{1}{2t}}{\mu(t)}}^{c} \ \cdot \ \integ{a}{b}{2t\, \norm{v(t)}^2}{\mu(t)}\\
            & = c \cdot \integ{a}{b}{2t \angled{
                \sum_{i=1}^N \alpha_i e^{-t \norm{\varphi(x_i)}^2} \varphi(x_i), \ 
                \sum_{j=1}^N \alpha_j e^{-t \norm{\varphi(x_j)}^2} \varphi(x_j)}
            }{\mu(t)}\\
            &= c \cdot \sum_{i,j=1}^N \alpha_i \alpha_j \integ{a}{b}{2t \ \exp\round{-t(\norm{\varphi(x_i)}^2 + \norm{\varphi(x_j)}^2)}\angled{\varphi(x_i), \varphi(x_j)}}{\mu(t)}\\
            &\leq c \cdot \sum_{i,j=1}^N \alpha_i \alpha_j H_1(x_i,x_j).
        \end{align*}
        Namely, $c H_1 \succeq B$, which, in concert with (\ref{eq:proof_diff_Schoenberg_C_Y>H_1}), yields $cC_Y \succeq cH_1 \succeq B$. By Lemma \ref{lem:differentiability_under_covariance_domination}, this means that $B$ induces a process which is $m$-times mean-square differentiable. To conclude, it is sufficient to notice that, if $X$ is the process with covariance $B$, then $X/q$ has covariance 
        \begin{equation*}
            \angled{\varphi(x), \varphi(y)}=\Cov{\vphantom{\big|}Z(x) - Z(x_0)}{Z(y) - Z(x_0)}, 
        \end{equation*}
        from which we get that $Z(x,y)$ is $m$-times mean-square differentiable.
        \par
        Now we will show that all the derivatives up to order $m$ of $\psi$ exists finite in $0^+$. Since $\varphi : \R \to \mathcal{H}$ is non-constant, we choose $x_0\in\R$ such that $\varphi'(x_0) \ne 0$. Again, wlog, we assume that $\varphi(x_0) = 0$. Now, let $M>0$ and consider
        \begin{equation*}
            C_M(x,y):=\integ{0}{M}{
                \frac{(2t)^m}{m!} \overbrace{\exp\round{-t\norm{\varphi(x)}^2}}^{a_t(x)}
                \overbrace{\exp\round{-t\norm{\varphi(y)}^2}}^{a_t(y)}
                \angled{\varphi(x), \varphi(y)}^m
            }{\mu(t)}.
        \end{equation*}
        Clearly we have that $C_Y \succeq C_M$. Now, setting $f_x(y) := f_y(x) := \angled{\varphi(x), \varphi(y)}$, we differentiate $C_M$ at $(x_0,x_0)$: 
        \begin{align*}
            &\frac{\partial^{2m} C_M(x_0,x_0)}{\partial x^m \ \partial y^m}= \integ{0}{M}{
                \frac{(2t)^m}{m!} \frac{\partial^m}{\partial x^m} \round{
                    a_t(x) \, \frac{\partial^m}{\partial y^m} \round{a_t(y) \cdot f_x^m(y)}
                }
            }{\mu(t)}.
        \end{align*}
        By the Leibniz rule, we have that 
        \begin{align*}
            \frac{\partial^m}{\partial y^m} \round{\vphantom{\big|}a_t(y) \cdot f_x^m(y)} &= \frac{\partial^m}{\partial y^m} \round{\vphantom{\big|}a_t(y) \cdot f_x(y) \cdot ... \cdot f_x(y)}\\
            &=\sum_{j_0+...+j_m=m} \frac{m!}{j_0! \cdot ... \cdot j_m!} \ 
            a_t^{(j_0)}(y) \prod_{i=1}^m f_x^{(j_i)}(y).
        \end{align*}
        Since $f_x(x_0)=\angled{\varphi(x), \varphi(x_0)} =\angled{\varphi(x), 0}= 0$, when evaluating the above expression at $y=x_0$, the only term that survives is the one with $j_0=0$ and $j_1=...=j_m=1$. That is:
        \begin{equation*}
            \frac{\partial^m}{\partial y^m} \round{\vphantom{\big|}a_t(y) \cdot f_x^m(y)}\Big|_{y=x_0} = m! \ a_t(x_0) \round{f_x'(x_0)}^m = m! \ a_t(x_0) \ \angled{\varphi(x), \varphi'(x_0)}^m.
        \end{equation*}
        Repeating the same argument with the derivative with respect to $x$, we obtain that 
        \begin{equation*}
            \frac{\partial^{2m} C_M(x_0,x_0)}{\partial x^m \ \partial y^m}= 2^m \cdot m! \cdot \norm{\varphi'(x_0)}^{2m} \cdot  \integ{0}{M}{t^m}{\mu(t)}.
        \end{equation*}
        Since $Y \in C^m(\R)$ and $C_Y \succeq C_M$, by Lemma \ref{lem:differentiability_under_covariance_domination}, we get that the above quantity is bounded by the $(m,m)$-order mixed partial derivative of $C_Y$, which is finite since $Y\in C^m$. This holds for any $M>0$, therefore we can pass to the limit $M\to+\infty$ and obtain:
        \begin{equation*}
            \lim_{M\to +\infty}
            \frac{\partial^{2m} C_M(x_0,x_0)}{\partial x^m \ \partial y^m} <+\infty \quad \implies \quad \integ{0}{\infty}{t^m}{\mu(t)} = (-1)^m \psi^{(m)}(0^+) < +\infty,
        \end{equation*}
        namely $\abs{\psi^{(m)}(0^+)} < +\infty$. Lower moments clearly exist as well. This concludes (1)$\implies$(2).
        \par
        We will now show that (2)$\implies$(1). Notice that, for each fixed $t\geq 0$, the covariance $C_t(x,y) := \exp\round{-t\gamma_p(x,y)}$ inherits order-$m$ mean-square differentiability from $Z$. 
        Consider the chain-rule (Fa\`a di Bruno's formula) expansion of $\partial_x^m \partial_y^m C_t$:
        \begin{equation}
            \label{eq:proof_diff_Schoenberg_chain_rule}
            \partial_x^m \partial_y^m C_t(x,y) = e^{-t\gamma_p(x,y)} \sum_{\pi \in \Pi} (-t)^{\abs{\pi}} \prod_{B\in\pi} \partial_x^{b_x} \partial_y^{b_y} \gamma_p(x,y),
        \end{equation}
        where $\Pi$ is the set of partitions of $S_x \cup S_y$, with $S_x := \curly{1_x, ..., m_x}$ and $S_y := \curly{1_y, ..., m_y}$, $b_x := \abs{B \cap S_x}$ and $b_y := \abs{B \cap S_y}$ count the $x,y$ components in $B$. In particular, it holds
        \begin{equation*}
            b_x + b_y \geq 1, \qquad \sum_{B\in \pi} b_x = \sum_{B\in \pi} b_y = m.
        \end{equation*}
        On the diagonal $y=x$, we have $\gamma_p=0$ and $\partial_x \gamma_p = \partial_y \gamma_p = 0$. As a consequence, every partition $\pi$ containing a singleton gives no contribution in \eqref{eq:proof_diff_Schoenberg_chain_rule}, that is: every block in a surviving partition contains at least two labels. Since there are $2m$ labels in total, it means that there can be at most $m$ blocks. Thus:
        \begin{equation}
            \label{eq:proof_diff_Schoenberg_bound_derivative}
            \partial_x^m \partial_y^m C_t(x,y) \Big|_{y=x} = \sum_{j=1}^m A_{m,j}(x) t^j \leq c_x \sum_{j=1}^m t^j,
        \end{equation}
        where all the coefficients $A_{m,j}$ (and, thus, $c_x$) are finite since each derivative in \eqref{eq:proof_diff_Schoenberg_chain_rule} uses at most $m$ differentiation (in either $x,y$) of $\varphi$.\par 
        To conclude the proof, we would like integrate over $[0,+\infty)$ with respect to $d\mu(t)$. However, the dominated convergence theorem requires control of $\partial_x^m \partial_y^m C_t(x,y)$ \emph{near} $y=x$, not just on the diagonal. Therefore, we provide an explicit feature representation. Let $W = \curly{W(u) \ : \ u\in \mathcal{H}}$ an isonormal Gaussian process on the probability space $(\Omega, \mathcal F, \P)$, that is: $W(u)$ is a zero-mean Gaussian random variable and $\Exp{W(u_1) W(u_2)} = \angled{u_1,u_2}_{\mathcal{H}}$. Now we define the complex-valued feature $\Phi_t : \mathcal{H} \to L^2(\Omega,\mathbb{C})$
        \begin{equation*}
            \Phi_t(u) := \exp\round{i\sqrt{2t} W(u)}.
        \end{equation*}
        The Gaussian characteristic function formula gives 
        \begin{equation}
            \label{eq:proof_diff_Shoenberg_covariance_feature_map}
            \Exp{\Phi_t(u_1) \ol{\Phi_t(u_2)}} = \exp\round{-t \norm{u_1 - u_2}_{\mathcal{H}}^2}.
        \end{equation}
        Clearly, $u\mapsto \Phi_t(u)$ is smooth and, if $v_1, ..., v_r \in \mathcal{H}$ are directions along with we take the derivatives, we have that 
        \begin{equation*}
            D_{[v_1, ..., v_r]}^r \Phi_t(u) := \frac{\partial^r \Phi_t(u)}{\partial_{v_1} u \ ... \ \partial_{v_r} u} = \round{i \sqrt{2t}}^r \Phi_t(u) \prod_{j=1}^r W(v_j).
        \end{equation*}
        Now, Gaussian moment bounds imply that, uniformly in $u$,
        \begin{equation}
            \label{eq:proof_diff_Schoenberg_bound_derivative_isonormal_process}
            \norm{D_{[v_1, ..., v_r]}^r \Phi_t(u)}_{L^2(\Omega)} \leq b_r t^{r/2} \prod_{j=1}^r \norm{v_j}_{\mathcal{H}},
        \end{equation}
        where $b_r$ depends only on $r$. We will now combine all the features in one Hilbert space $\widetilde{\mathcal{H}} := L^2(\mu_t \otimes \P_\omega)$, $F(u)(t,\omega) := \Phi_t(u)(\omega)$. The hypothesis of having finite $\integ{0}{\infty}{t^m}{\mu(t)}$ and the finiteness of $\mu$ imply that, for all $r\in\curly{0,...,m}$,
        \begin{equation*}
            \integ{0}{+\infty}{t^r}{\mu(t)} < +\infty.
        \end{equation*}
        As a consequence, squaring \eqref{eq:proof_diff_Schoenberg_bound_derivative_isonormal_process} gives an integrable bound for each derivative up to order $m$. For example, for the first derivative:
        \begin{equation*}
            \frac{\Phi_t(u+\varepsilon v) - \Phi_t(u)}{\varepsilon} = \integ{0}{1}{D_v\Phi_t(u+\theta \varepsilon v)}{\theta},
        \end{equation*}
        which implies that the $L^2(\Omega)$-norm is bounded by $b_1 t^{1/2} \norm{v}_{\mathcal H}$, which, in turn, implies that the squared norm is bounded by $4b_1^2t \norm{v}_{\mathcal H}^2$, which is $\mu$-integrable. The dominated convergence theorem proves the convergence in $\mathcal H$ as $\varepsilon\to 0$. We apply now the same argument to $D^{r-1} \Phi_t$: by mean of \eqref{eq:proof_diff_Schoenberg_bound_derivative_isonormal_process} with order $r$, we get that $F \in C^m$ and its derivatives are obtained by differentiating the features inside the $L^2$ integral.\par 
        Finally, set $G(x) := F(\varphi(x))$. The ordinary Hilbert-space chain rule shows that $G$ is $m$-times differentiable. Therefore, by \eqref{eq:proof_diff_Shoenberg_covariance_feature_map}, we have that 
        \begin{equation*}
            C_Y(x,y) = \integ{0}{+\infty}{C_t(x,y)}{\mu(t)} = \angled{G(x),G(y)}_{\mathcal H}.
        \end{equation*}
        Differentiating this inner product, therefore, gives 
        \begin{align*}
            \partial_x^m \partial_y^m C_Y(x,y) = \angled{G^{(m)}(x), G^{(m)}(y)}_{\mathcal{H}} = \integ{0}{+\infty}{\partial_x^m \partial_y^m C_t(x,y)}{\mu(t)}.
        \end{align*}
        In particular, on the diagonal and using \eqref{eq:proof_diff_Schoenberg_bound_derivative}, this becomes
        \begin{equation*}
            \partial_x^m \partial_y^m C_Y(x,x) = \integ{0}{+\infty}{\partial_x^m \partial_y^m C_t(x,y)\Big|_{y=x}}{\mu(t)} \leq \integ{0}{+\infty}{c_x \sum_{j=1}^m t^j}{\mu(t)} < +\infty.
        \end{equation*}
    \end{proof}
    Theorem \ref{theo:schoenberg_keeps_diff_R^n} defines a strong direct link between the differentiability of the process $Z$ and the one of the process $Y$ built via the Schoenberg theorem (as described in Subsection \ref{ssec:covariance_functions_stochastic_processes}). Being the edges of metric graphs real segments, will hold on the interior of each edge of a metric graph. However, the vertices will need additional care, as the neighbourhood of a vertex of degree at least $3$ is not isomorphic to $\R^n$ and as the concept of ``differentiability'' changes with the parity of the relative order. \par
    Here, we give a characterisation of the differentiability at a given point for a Gaussian process on a metric graph.
    

    \begin{theoremrep}
        \label{theo:arbitrary_order_diff_characterisation}
        Let $Z$ be a zero-mean square-integrable process on $G$ with covariance function $C_Z$ and let $v\in V$. Assume that its edge-wise mean-square derivatives up to order $m$ exist in a neighbourhood of $v$. Then $Z\in C^m(v)$ iff, for each $j\in \curly{0,...,m}$ and $y\in G$, it holds
        \begin{itemize}
            \item if $j$ is even,
            \begin{equation}
                \label{eq:arbitrary_diff_even}
                \frac{\partial_e^j C_Z(x,y)}{\partial_e x^j}\Big|_{x=x_0}
            \end{equation}
            does not depend on $e\in E_{v}$;
            \item if $j$ is odd,
            \begin{equation}
                \sum_{e\in E_{v}} \frac{\partial_e^j C_Z(x,y)}{\partial_e x^j}\Big|_{x=v} = 0.
            \end{equation}
        \end{itemize}
    \end{theoremrep}
    \begin{proof}[Proof of Theorem \ref{theo:arbitrary_order_diff_characterisation}]
        First, we recall that, if $y\in G$, $v\in V$ and $e\in E_v$, we have 
        \begin{equation*}
            \Cov{\partial_e^j Z(v)}{Z(y)} = \frac{\partial^{j}  C_Z(x,y)}{\partial_e x^j}\Big|_{x=v}. 
        \end{equation*}
        As a consequence, for $j$ even, we must have that for each $e,e'\in E_V$, it holds $\partial_e^j Z(v) = \partial_{e'}^j Z(v)$, which implies, for any $y\in G$,
        \begin{equation*}
            0=\Cov{\partial_e^j Z(v) - \partial_{e'}^j Z(v)}{Z(y)} = \frac{\partial^{j}  C_Z(x,y)}{\partial_e x^j}\Big|_{x=v} - \frac{\partial^{j}  C_Z(x,y)}{\partial_{e'} x^j}\Big|_{x=v},
        \end{equation*}
        that is: $\partial_{e,x}^j C_Z(x,y) \big|_{x=v}$ does not depend on $e\in E_v$, for each $y\in G$. This condition is also sufficient to $\partial_e^j Z(v) = \partial_{e'}^j Z(v)$, since, being $\partial_e^j Z(v) - \partial_{e'}^j Z(v)$ uncorrelated with any $Z(y)$ and being both the covariance and the derivative linear, $\partial_e^j Z(v) - \partial_{e'}^j Z(v)$ is also uncorrelated with itself, namely is $0$ a.s..\par
        For $j$ odd, the argument is the same. Indeed, we have $\sum_{e\in E_v} \partial_e^j Z(v) = 0$, which is equivalent to
        \begin{equation*}
            0 = \Cov{\sum_{e\in E_v} \partial_e^j Z(v)}{Z(y)} = \sum_{e\in E_v} \frac{\partial^j C_Z(x,y)}{\partial_e x^j}\Big|_{x=v}.
        \end{equation*}
    \end{proof}

    As an almost immediate consequence, we get the following corollary, which states that there are no non-trivial differentiable processes isotropic w.r.t. distances that behave locally like the shortest-path distance. We stress the connection with \cite[Proposition 6]{AnderesEtAl_IsotropicCovarianceFunctions_2020}.
    
    \begin{corollaryrep}
        \label{cor:isotr_and_diff_implies_constant}
        Let $G$ be a metric graph and let $Z: G\to\R$ be a process isotropic w.r.t. a distance $d$ that is \emph{locally equivalent} to the shortest-path distance $d_{SP}$ around a branching point $v$ (\ie a vertex with $\deg v \geq 3$), namely $\exists \alpha > 0,r > 0$, for which
        \begin{equation*}
            d(x_1,x_2) = \alpha \, d_{SP}(x_1,x_2) + o(d_{SP}(x_1,x_2)), \qquad x_1,x_2 \in B_r(v),
        \end{equation*}
        where $B_r(v)$ is the ball centred in $v$ with (shortest-path) radius $r$. If $Z$ is differentiable on a point on $B_r$, then $Z$ is a.s. constant on $G$.
    \end{corollaryrep}
    \begin{proof}[Proof of Corollary \ref{cor:isotr_and_diff_implies_constant}]
        Let $0<x<r$, $n := \deg(v)$ and let $\dVec x \in G^{n+1}$ the vector containing $v$ and the $n$ points that have (shortest-path) distance $x$ from $v$, each on a different edge $e\in E_v$. Since $\dVec x \subset B_r(v)$, we have that the distance matrix of $\dVec x$ has the following structure:
        \begin{equation*}
            \matr D(\dVec x) := \square{d(x_i, x_j)}_{j=1}^{n+1} = \alpha \begin{bmatrix}
                0 & x & x & \dots & x \\
                x & 0 & 2x & \dots & 2x \\
                x & 2x & 0 & \dots & 2x \\
                \vdots & \vdots & \vdots & \ddots & \vdots \\
                x & 2x & 2x & \dots & 0 
            \end{bmatrix} + o(x).
        \end{equation*}
        Wlog, we assume that $\Var Z \equiv 1$. Now, since $Z$ is isotropic and is differentiable on $B_r(v)$, assuming $Z$ non constant, we have that $C_Z(d) = \psi(d)$, with $\psi$ satisfying $\psi(d) = 1 - c d^\beta + o(d^\beta)$ for $c \geq 0, \beta \geq 2$ and for small $d$. Now, if $c=0$, then 
        \begin{equation*}
            \Cov{Z'(x_1)}{Z'(x_2)} = \frac{\partial^2}{\partial_e x_1 \partial_e x_2} \Cov{Z(x_1)}{Z(x_2)} = 0     
        \end{equation*}
        for sufficiently close $x_1,x_2$. This implies that $Z$ is locally constant, thus globally constant (being $G$ connected), thus necessarily $c>0$. Consider now the covariance matrix $\matr \Sigma = \psi(\matr D(\dVec x))$: it must be positive semidefinite. Therefore, the Schur complement
        \begin{equation*}
            \matr \Sigma / \Sigma_{11} = \begin{bmatrix}
                1 & \psi(2\alpha x) & \dots & \psi(2\alpha x) \\
                \psi(2\alpha x) & 1 & \dots & \psi(2\alpha x) \\
                \vdots & \vdots & \ddots & \vdots \\
                \psi(2\alpha x) & \psi(2\alpha x) & \dots & 1
            \end{bmatrix} - \begin{bmatrix}
                \psi(\alpha x)\\
                \psi(\alpha x)\\
                \vdots\\
                \psi(\alpha x)
            \end{bmatrix} \ \square{1} \ \begin{bmatrix}
                \psi(\alpha x)\\
                \psi(\alpha x)\\
                \vdots\\
                \psi(\alpha x)
            \end{bmatrix}^\top
        \end{equation*}
        must be positive semidefinite as well. In particular, it must satisfy
        \begin{align*}
            0 &\leq \dVec 1_n^\top (\matr \Sigma / \Sigma_{11}) \dVec 1_n = n(1 + (n-1) \psi(2x) - n\psi^2(x)) \\
            \iff 0 & \leq  1 + (n-1) (1 - (2\alpha x)^\beta + o(x^\beta)) - n (1 - (\alpha x)^\beta + o(x^\beta))^2\\
            \iff 0 &\leq \alpha^\beta x^\beta (2n - (n-1) \cdot 2^\beta) + o(x^\beta),
        \end{align*}
        which is false for $x$ sufficiently small, since $n\geq 3$ and $\beta \geq 2$. This means that it cannot be $\psi(d) = 1 - d^\beta + o(d^\beta)$: the only other possibility, assuming $Z$ differentiable is that $\psi \equiv 1$, namely $Z$ is constant.
    \end{proof}

    We stress that the requirement $\alpha>0$ in the statement of Corollary \ref{cor:isotr_and_diff_implies_constant} is instrumental: it is equivalent to requiring that $d$ is not flat on the diagonal. \par

    \begin{remark}
        \label{rem:effective_resistance_is_equivalent_to_SP}
        The effective resistance distance is locally equivalent to the shortest-path distance, with constant $\alpha=1$. Indeed, fix $x\in G$ and assume $y \in G$ sufficiently close to $x$. Then, there exist exactly one path $p\subset G$ from $x$ to $y$ that has the shortest-path distance, and there are no branching points on its interior (namely there is an edge $e\in E$ s.th. $p\subseteq e$). As a consequence, by standard law of parallel and series resistors, it is possible to reduce $G\setminus p$ (if it is connected, otherwise the resistance distance coincides with the shortest-path one and the argument is trivial) to a single resistor. This means that, for the purpose of computing $d(x,y)$, the graph $G$ is equivalent to a circle of length, say, $l$. If now $d$ is the shortest-path distance between $x$ and $y$, then
        \begin{equation*}
            d_R(x,y) = \round{\frac{1}{d} + \frac{1}{l-d}}^{-1} = d - \frac{d^2}{l} = d + o(d).
        \end{equation*}
    \end{remark}

    From Corollary \ref{cor:isotr_and_diff_implies_constant} and Remark \ref{rem:effective_resistance_is_equivalent_to_SP} we immediately get the following result.
    \begin{corollary}
        There are no non-trivial differentiable processes on branching metric graphs that are isotropic w.r.t. the effective resistance distance.
    \end{corollary}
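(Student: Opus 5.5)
The corollary follows by combining Corollary \ref{cor:isotr_and_diff_implies_constant} with Remark \ref{rem:effective_resistance_is_equivalent_to_SP}. Throughout, "branching metric graph" means that $G$ has at least one vertex $v$ with $\deg v\geq 3$. "Isotropic w.r.t. the effective resistance distance" means $C_Z(x_1,x_2)=\psi(d_R(x_1,x_2))$ for some function $\psi$. Here $d_R$ is the square root of $\gamma_1$; by Proposition \ref{prop:m=0_resistance_Anderes}, $\gamma_1$ is the effective resistance distance of \citet{AnderesEtAl_IsotropicCovarianceFunctions_2020}.

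The first step is to check that the local equivalence hypothesis of Corollary \ref{cor:isotr_and_diff_implies_constant} holds on a ball $B_r(v)$ around a branching vertex $v$, with $\alpha=1$. Remark \ref{rem:effective_resistance_is_equivalent_to_SP} shows that $d_R(x,y)=d_{SP}(x,y)+o(d_{SP}(x,y))$ for $y$ close to $x$. However, that argument assumes the shortest path between $x$ and $y$ lies in a single edge. For two points on different edges of $E_v$ inside $B_r(v)$, the path passes through $v$. I would handle this case with the edge-splitting invariance of Proposition \ref{prop:invariance}: insert degree-2 vertices at $x$ and $y$, so the path from $x$ through $v$ to $y$ becomes a series of two short resistors. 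The rest of the network attaches to this path only at $x$, $v$ and $y$, so it reduces by Kron reduction to a finite three-terminal network whose resistances are bounded below by a positive constant once $r$ is small. Series and parallel composition then give $d_R=d_{SP}(1+O(d_{SP}))$. In this case $d_R$ here means the resistance itself, i.e.\ $\gamma_1$, which is what the Remark computes. I will follow the Remark's convention and take $d=\gamma_1$ as the distance in Corollary \ref{cor:isotr_and_diff_implies_constant}; the argument is unchanged if $\psi$ is composed with $\gamma_1$ directly.

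The second step is to apply Corollary \ref{cor:isotr_and_diff_implies_constant}. If $Z$ is mean-square differentiable at some point of $B_r(v)$, then $Z$ is a.s.\ constant on $G$, so $Z$ is trivial. If the process is assumed differentiable anywhere on $G$, isotropy gives a local form $\psi(d)=1-cd^\beta+o(d^\beta)$ with $\beta\geq 2$ near $d=0$, and this behaviour is the same at every point. Hence differentiability also holds inside $B_r(v)$, and the conclusion follows.

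The main obstacle is making the local equivalence near the branching vertex rigorous, since the Remark only treats pairs of points on a common edge. The key quantitative fact is that the correction is $o(d_{SP})$ uniformly in $B_r(v)$. This is what the Schur-complement computation in the proof of Corollary \ref{cor:isotr_and_diff_implies_constant} needs, and it follows from the reduced network's other resistances being bounded away from zero.
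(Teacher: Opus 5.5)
Your proof is correct and is the paper's argument: Corollary~\ref{cor:isotr_and_diff_implies_constant} combined with Remark~\ref{rem:effective_resistance_is_equivalent_to_SP}, applied with $d=\gamma_1$. You should drop your opening identification of the resistance distance with $\sqrt{\gamma_1}$, because $\sqrt{\gamma_1}$ behaves like $\sqrt{d_{SP}}$ and is not locally equivalent to $d_{SP}$; since isotropy w.r.t.\ $\gamma_1$ or $\sqrt{\gamma_1}$ gives the same class of covariances, the slip does no harm once you switch to $d=\gamma_1$ as you do. Your Kron-reduction step does real work. The Remark only treats pairs whose shortest path stays inside one edge, whereas the Schur-complement computation uses pairs on different edges at distance $2x$ through $v$. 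Your uniform positive lower bound on the reduced three-terminal resistances (available, e.g., from Rayleigh monotonicity) is what gives $\gamma_1=d_{SP}\,(1+O(d_{SP}))$ there.
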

    
    Next, we prove that if a ``distance'' is flat on the diagonal, then it cannot satisfy the triangle inequality.
    \begin{propositionrep}
        \label{prop:no_flat_distance_on_R}
        Let $I\subseteq\R$ with non-empty interior. Then there is no distance $d:I\times I\to \R$ that is flat on the diagonal, \ie that satisfies
        \begin{equation}
            \label{eq:flat_diagonal_def}
            \forall x\in I, \quad\frac{\partial d(x,y)}{\partial y}\bigg|_{x=y} = 0.
        \end{equation}
    \end{propositionrep}
    \begin{proof}[Proof of Proposition \ref{prop:no_flat_distance_on_R}]
        Without loss of generality, we assume that $I=[a,b]$ is a closed interval, with $a<b$. Indeed, any set with non-empty interior contains such an interval.

        Let us rewrite equation (\ref{eq:flat_diagonal_def}): $\forall x \in I$,
        \begin{equation*}
            0 = \lim_{y\to x} \frac{d(x,y)-d(x,x)}{y-x}=\lim_{y\to x} \frac{d(x,y)}{y-x}.
        \end{equation*}
        By definition of limit, one gets $\forall x \in I, \forall \varepsilon>0, \exists \delta_{x,\varepsilon}$ s.th.
        \begin{equation}
            \label{eq:inequality_d_differentiable}
             \forall y\in B(x,\delta_{x,\varepsilon}),\, d(x,y)<\varepsilon \abs{x-y},
        \end{equation}
        being $B(x,\delta):= (x-\delta, x+\delta)$. Now consider the following open cover of $I$:
        \begin{equation*}
            I \subseteq \bigcup_{x\in I} B(x,\delta_{x,\varepsilon}).
        \end{equation*}
         Since $I$ is compact, there exist a finite subcover: there are $x_1<\dots<x_m \in I$ s.th.
        \begin{equation*}
            I \subseteq \bigcup_{i=1}^m B(x_i,\delta_{x_i,\varepsilon}).
        \end{equation*}
        Now, there will exist a $\overline{\delta}>0$ smaller than all the $\delta's$ and all the (half) lengths of the overlapping regions $B(x_i,\delta_{x_i, \varepsilon}) \cap  B(x_{i+1}, \delta_{x_{i+1},\varepsilon})$. This guarantees that, for any $x\in I$, $B(x,\overline{\delta})$ is contained in at least one interval $B(x_i,\delta_{x_i,\varepsilon})$. Therefore, we can apply inequality (\ref{eq:inequality_d_differentiable}).\par
        We conclude the proof by contradiction: consider the grid 
        \begin{equation*}
            \overline{x}_j:=\begin{dcases}
                a + (j-1) \overline{\delta}\qquad&\text{for }j\in\curly{1,\dots,\overline{n}-1}\\
                b\qquad&\text{for }j=\overline{n},
            \end{dcases}    
        \end{equation*}
        where $\overline{n}:=\lceil \frac{b-a}{\overline{\delta}}\rceil$. By the triangle inequality of $d$ and by (\ref{eq:inequality_d_differentiable}), we have:
        \begin{equation*}
            d(a,b)\leq\sum_{j=1}^{\overline n-1} d(x_j,x_{j+1}) \leq \sum_{j=1}^{\overline n-1} \varepsilon \abs{x_{j+1} - x_j}= \varepsilon (b-a).
        \end{equation*}
        The above inequality holds for each $\varepsilon>0$, therefore necessarily $d(a,b)=0$, but this is impossible since $d$ is a distance and $a\ne b$.  
    \end{proof}

    \begin{corollaryrep}
        \label{cor:no_flat_distance_on_loc_Euclidean}
        In any locally-Euclidean space, there is no distance that is flat on the diagonal.
    \end{corollaryrep}
    \begin{proof}[Proof of Corollary \ref{cor:no_flat_distance_on_loc_Euclidean}]
        A locally-Euclidean space is in local homeomorphism with an open subset of $\R^m$, which contains a segment of the real line. Thus, Proposition \ref{prop:no_flat_distance_on_R} applies as any open set of $\R^m$ contains a segment of the real line. 
    \end{proof}

    \begin{corollaryrep}
        \label{cor:d_composed_CM_cannot_be_diff}
        Let $d$ be a (proper) distance on a space $X$ that contains a non-trivial interval and let $\psi$ be a non-constant completely monotonic function such that $\psi(0) < \infty$. Assume that $\psi\circ d$ is positive semidefinite on $X\times X$. Then the covariance $C_Y$ cannot generate a process that is mean-square differentiable.
    \end{corollaryrep}
    Although it may seem that the Corollary \ref{cor:d_composed_CM_cannot_be_diff} clashes with the construction given in Section \ref{sec:polyharmonic_distances}, it does not. Indeed, here we are stating that we cannot compose a distance \emph{directly} with a completely monotonic function and obtain a mean-square differentiable model. However, in Section \ref{sec:polyharmonic_distances}, we claim that the zero-mean process with covariance $\psi\round{\gamma_p(\cdot,\cdot)} = \psi\round{d_p^2(\cdot,\cdot)}$ is $p-1$-times mean-square differentiable: the square makes all the difference.
    
    \begin{theoremrep}
        \label{theo:C_1^m_implies_Schoenberg_C_1^m}
        Let $G$ be a metric graph and let $Z:G\to \R$ be a square-integrable process such that $Z\in C_k^m(x)$, where $x\in G$ and where $1\leq k \leq m \leq \infty$. Let $\psi$ be a non-constant completely monotonic function such that $\psi^{(m)}(0^+)$ is finite (if $m=+\infty$, this reads $\psi^{(j)}(0^+)<+\infty$ for all $j\in \N$). If we set $Y:=\Sch_\psi(Z)$, then $Y\in C_1^m(x)$.
    \end{theoremrep}
    \begin{proof}[Proof of Theorem \ref{theo:C_1^m_implies_Schoenberg_C_1^m}]
        If $x \not \in V$, then there exist a neighbourhood of $x$ which is isomorphic to $\R$, and therefore the result follows directly from Theorem \ref{theo:schoenberg_keeps_diff_R^n}. As a consequence, we have to show the result for $x \in V$: we are going to apply the characterisation of Theorem \ref{theo:arbitrary_order_diff_characterisation}. Let $y\in G$: we need to show that
        \begin{align*}
            &\sum_{e\in E_x} \frac{\partial C_Y(x,y)}{\partial_e x} = 0 \\
            \iff &\sum_{e\in E_x} \frac{\partial \psi(\gamma_p(x,y))}{\partial_e x} = 0 \\
            \iff &\sum_{e\in E_x} \psi'(\gamma_p(x,y)) \frac{\partial}{\partial_e x} \round{\vphantom{\Big|}C_Z(x,x) + C_Z(y,y) - 2C_Z(x,y)}=0\\
            \iff & \sum_{e\in E_x} \round{2 \frac{\partial}{\partial_e x} C_Z(x,y') \big|_{y'=x} - 2 \frac{\partial}{\partial_e x} C_Z(x,y)} = 0,
        \end{align*}
        which is true since $Z \in C_1^1$, by the characterisation in Theorem \ref{theo:arbitrary_order_diff_characterisation}.
    \end{proof}
    
    One may ask whether the above result holds for higher-order derivatives. The next Proposition gives a negative answer.
    \begin{propositionrep}
        \label{prop:Cm_not_implies_Schoenberg_Cm}
        With the same hypotheses of Theorem \ref{theo:C_1^m_implies_Schoenberg_C_1^m}, let $v\in V$ be a vertex with $\deg(v)\geq 3$. Then, the fact that $Z\in C_k^m(v)$ does \textbf{not} imply that $Y\in C_{k'}^m(v)$ for $k' \geq 2$.
    \end{propositionrep}
    \begin{proof}[Proof of Proposition \ref{prop:Cm_not_implies_Schoenberg_Cm}]
        We exhibit a counter-example. Consider $G$ the star graph with $k \geq 2$ leaves and with edges all having length $1$. First, we define $Z$ on the vertices: on the centre of the star, say $v_0$, we define $Z(v_0):=0$. Furthermore, on the leaves, say $v_1, ..., v_k$, we define:
        \begin{equation*}
            \rVec Z_V := \round{Z(v_1),\dots,Z(v_k)}^\top \sim \NormMV{k}{\dVec 0}{\matr \Sigma}, \qquad \matr \Sigma := k \matr I_k - \matr 1_{k\times k},
        \end{equation*}
        where $\matr 1_{k\times k}$ denotes the matrix full of ones. It is immediate to see that $\matr \Sigma$ has rank $k-1$ and its kernel is generated by the vector $\dVec 1_k$. As a consequence, $\dVec 1_k^\top \rVec Z_V = 0$ almost surely. Next, we define $Z$ on each edge $e_i := (v_0, v_i)$, for $i \in \curly{1,...,k}$, to be the linear interpolation of $Z(v_0)=0$ and $Z(v_i)$. Therefore, if $(i, x) \in \curly{1,\dots,k} \times [0,1]$ denotes the point belonging to the edge $e_i$ and having distance $x$ from $v_0$, we have:
        \begin{equation*}
            Z((i, x)) := x Z(v_i).
        \end{equation*}
        Notice that $Z$ is infinitely-many times differentiable in $v_0$: $Z$ is continuous, the exiting derivatives are $\frac{\text{d}}{\text{d}x} Z((i,x)) = Z(v_i)$ (which sum up to $0$), and all the higher-order derivatives are null, as $Z$ is linear on each edge. Therefore all the Kirchhoff conditions are satisfied in $v_0$. \par
        We are now going to show that $Y$ defined via the Schoenberg theorem is not even $C^2(v_0)$. From the definition of $Z$, it is immediate to compute the covariance function (and therefore the variogram):
        \begin{align*}
            C_Z((i,x),(j,y))&=\begin{dcases}
                (k-1) \ xy \qquad&\text{if $i=j$},\\
                -xy \qquad &\text{if $i\ne j$},
            \end{dcases} \\
            \gamma_Z((i,x),(j,y)) &= \begin{dcases}
                (k-1)(x-y)^2 \qquad &\text{if $i=j$},\\
                (k-1)(x^2+y^2) + 2 xy &\text{if $i\ne j$}.
            \end{dcases}
        \end{align*}
        Thus, the covariance function of $Y$ is given by
        \begin{equation*}
            C_Y((i,x),(j,y)) = \begin{dcases}
                \exp\round{-(k-1)(x-y)^2}\qquad &\text{if $i=j$},\\
                \exp\round{-(k-1)(x^2+y^2) - 2 xy} \qquad &\text{if $i\ne j$}.
                \end{dcases}
        \end{equation*}
        This, in turn, implies that the second-order derivative evaluated at $x=0^+$ reads:
        \begin{equation*}
            \frac{\partial^2 C_Y}{\partial^2 x} \Bigg|_{x=0^+} = \begin{dcases}
                2 \, {\left(2 \, k y^{2} - 2 \, y^{2} - 1\right)} {\left(k - 1\right)} e^{(1-k) y^{2}}\qquad &\text{if $i=j$},\\
                2 \, {\left(2 \, y^{2} - k + 1\right)} e^{(1-k) y^{2}}&\text{if $i\ne j$}.
            \end{dcases}
        \end{equation*}
        In order to be continuous at $0$, the two cases should be the same (for any $y$). But this holds if and only if $k = 2$. This confirms the positive result of differentiability on the edges ($k=2$ means that the vertex $v_0$ is a degree-2 vertex, and therefore can be removed), and shows that, in general, the differentiability of $Z$ does not translates on $Y$ on the vertices.
    \end{proof}
    The reason underlying the above negative result is that the composition of $\gamma_p$ with the negative exponential in the Schoenberg construction introduces non-linearities.
    
\section{Examples}
\label{sec:examples}
Here we provide some examples of simple metric graph and their associated variograms $\gamma_p$ and polyharmonic distances $d_p$. The graphs used are depicted in Figure \ref{fig:example_graphs}.

\savebox{\imagebox}{
    \begin{tikzpicture}[scale = 0.7]
        \def\n{3}    
        \def\r{2.3}  
        
        \coordinate (orig) at (0,0);
        \node[label={[label distance=-1mm]{0}:{$v_0$}}] at (orig) {};
        \fill (orig) circle (2pt);

        \coordinate (p) at (120:{-\r/2});
        \node[left] at (p) {$p$}; 
        \fill (p) circle (2pt);
        
        \foreach \i in {1,...,\n}{
            \coordinate (v\i) at ({(\i-1)*360/\n + 180}:\r);
            \node[label={[label distance=-1.5mm]{(\i-1)*360/\n + 180}:{$v_\i$}}] at (v\i) {}; 
            \fill (v\i) circle (2pt); 
            \draw (orig) -- (v\i);       
            
        }
    \end{tikzpicture}
}%
\begin{figure}[t]
    \centering
    \begin{subfigure}{0.31\textwidth} 
        \centering\raisebox{\dimexpr.5\ht\imagebox-.5\height}{
            \begin{tikzpicture}[scale = 0.7]
                \coordinate (v1) at (0,0) {};
                \coordinate (v2) at (3,0) {};
                \coordinate (p) at (3/2,0) {};
                
                \fill (v1) circle (2pt);
                \fill (v2) circle (2pt);
                \fill (p) circle (2pt);
                
                \draw (v1) -- (v2);
                
                \node[below] at (v1) {$v_1$};
                \node[below] at (v2) {$v_2$};
                \node[below] at (p) {$p$};
            \end{tikzpicture}
        }
        \caption*{$G_1$}
    \end{subfigure}
    \hfill
    \begin{subfigure}{0.31\textwidth}
        \centering\usebox{\imagebox}
        \caption*{$G_2$}
    \end{subfigure}
    \hfill
    \begin{subfigure}{0.31\textwidth}
        \centering\raisebox{\dimexpr.5\ht\imagebox-.5\height}{
            \begin{tikzpicture}[scale = 0.7]
                \def\r{2.0}
                
                \foreach \i in {1,...,3}{
                    \coordinate ({v\i}) at ({60+120*\i}:\r);
                    \fill ({v\i}) circle (2pt);
                    \node[label={[label distance=-1.5mm]{60+120*\i}:{$v_\i$}}] at ({v\i}) {};
                }
                
                \draw (0,0) circle (\r);
            \end{tikzpicture}
        }
        \caption*{$G_3$}
    \end{subfigure}
    \caption{The graphs used as examples in Subsection \ref{sec:examples}.}
    \label{fig:example_graphs}
\end{figure}

\begin{propositionrep}
    \label{prop:gamma_on_segment_0l}
    Let $G$ be the segment of length $l>0$. Then, for each $x_1,x_2 \in G$, it holds:
    \begin{align}
        \label{eq:gamma_on_segment_0l} \gamma_p(x_1,x_2) = \frac{2l^{2p-1}}{\pi^{2p}} \sum_{k=1}^\infty \frac{1}{k^{2p}}\round{\cos\round{\frac{\pi k x_1}{l}} - \cos\round{\frac{\pi k x_2}{l}}}^2,\\
        \label{eq:gamma_on_segment_0l_limit} \frac{\pi^{2p}}{2l^{2p-1}}  \ \gamma_p(x_1,x_2) \overset{m\to\infty}{\longrightarrow} \round{\cos\round{\frac{\pi x_1}{l}} - \cos\round{\frac{\pi x_2}{l}}}^2.
    \end{align}
\end{propositionrep}
\begin{proof}[Proof of Proposition \ref{prop:gamma_on_segment_0l}]
    We are going to exploit the Fourier series representation of $f\in \mathcal{F}$. However, we first extend $f$ evenly on $[-l,0]$: $f(-x):=f(x)$ for $x\in[0,l]$. In this way, $f$ is even and the Fourier series will only have even coefficients (\emph{videlicet} only cosine terms). This, in turn, guarantees that all the odd derivatives of $f$ at $0$ and $l$ will be null. Now, the Fourier expansion looks (notice that we changed the coefficient of the $\phi_k$'s as they must be orthonormal on $[0,l]$)
    \begin{equation*}
        f(x) = \sum_{k = 0}^\infty a_k \phi_k(x), \qquad \phi_k(x) = \sqrt{\frac{2}{l}}\cos\round{\frac{\pi k}{l}\ x}, \qquad x\in[-l,l].
    \end{equation*}
     Now, the eigenvalue of the Laplace operator $Lf:=-f''$ associated to $\phi_k$ is $\lambda_k=\round{\frac{\pi k}{l}}^2$. In addition, we can neglect the constant term associated to $a_0$. Now, it is sufficient to use Proposition \ref{prop:spectral_form_gamma} to get
    \begin{align*}
        \gamma_p(x_1,x_2) &= \sum_{k=1}^\infty \frac{l^{2p}}{\pi^{2p} k^{2p}} \round{\sqrt{\frac{2}{l}}\cos\round{\frac{\pi k x_1}{l}} - \sqrt{\frac{2}{l}}\cos\round{\frac{\pi k x_2}{l}}}^2,
    \end{align*}
    which is (\ref{eq:gamma_on_segment_0l}), which in turn leads to (\ref{eq:gamma_on_segment_0l_limit}) by taking $m\to +\infty$.
\end{proof}

\begin{propositionrep}
    \label{prop:gamma_on_circle_S1}
    Let $G$ be the circle graph of length $l>0$. Then, for each $x_1,x_2 \in G$, it holds
    \begin{align}
        \label{eq:gamma_on_circle_S1}
        \gamma_p(x_1,x_2) = \frac{4 l^{2p-1}}{(2\pi)^{2p}} \sum_{k=1}^{+\infty} \frac{1 - \cos\round{\frac{2 \pi k d}{l}}}{k^{2p}},\\
        \frac{(2 \pi)^{2p}}{l^{2p-1}} \ \gamma_p(d) \longrightarrow 4 \round{1 - \cos\round{\frac{2\pi d}{l}}},\nonumber
    \end{align}
    where $d$ is the great-circle distance on $G$ between $x_1$ and $x_2$. See also \citet[Example 3.2]{BerkolaikoLiu_SimplicityEigenvaluesNonvanishing_2017}.
\end{propositionrep}
\begin{proof}[Proof of Proposition \ref{prop:gamma_on_circle_S1}]
    Clearly, $G$ is isomorphic to the segment $[0,l]$ with the identification $0=l$. This also guarantees that all the Kirchhoff conditions are automatically satisfied in $0=l$. Therefore, we can exploit the classical (full) Fourier basis on $[0,l]$:
    \begin{align*}
        \phi_k(x) = \begin{dcases}
            \sqrt{\frac{2}{ l}}\cos\round{\frac{2\pi k}{l}x}\qquad\text{if $k > 0$},\\
            \sqrt{\frac{2}{ l}}\sin\round{\frac{2\pi k}{l}x}\qquad\text{if $k < 0$},
        \end{dcases}
    \end{align*}
    where the $k=0$ has null eigenvalue and therefore is not considered. Clearly $\phi_k$ is associated to the eigenvalue $\lambda_k = \round{\frac{2\pi k}{l}}^2$. This implies that (\ref{eq:spectral_form_gamma}) reads
    \begin{align*}
        \gamma_p(x_1,x_2) &= \sum_k \frac{(\phi_k(x_1) - \phi_k(x_2))^2}{\lambda_k^{p}} \\
        &= \frac{2}{l} \sum_{k=1}^\infty \round{\frac{l}{2\pi k}}^{2p} \round{\round{\cos{\frac{2\pi k x_1}{l}} - \cos{\frac{2\pi k x_2}{l}}}^2 + \round{\sin{\frac{2\pi k x_1}{l}} - \sin{\frac{2\pi k x_2}{l}}}^2}\\
        &=\frac{2 l^{2p-1}}{(2\pi)^{2p}} \sum_{k=1}^\infty \frac{2}{k^{2p}} \round{1 - \cos \frac{2\pi k x_1}{l} \cos \frac{2\pi k x_2}{l} - \sin \frac{2\pi k x_1}{l} \sin \frac{2\pi k x_2}{l}}\\
        &=\frac{4 l^{2p-1}}{(2\pi)^{2p}} \sum_{k=1}^{+\infty} \frac{1 - \cos\round{\frac{2 \pi k (x_1-x_2)}{l}}}{k^{2p}}.
    \end{align*}
\end{proof}

\begin{propositionrep}
    \label{prop:gamma_on_star_graph}
    Let $G$ be the star graph with $N\geq 2$ leaves and with all the edges of length $l$. We parametrise each edge $e_j$, for $j\in\curly{1,...,N}$, as $[0,l]$, where $0$ is the centre of the star and $l$ is the leaf. Finally, let $x_1,x_2\in G$, with $x_1\in e_{j_1}$ and $x_2\in e_{j_2}$. Then, as $p\to +\infty$, we have that $\round{\frac{\pi}{2l}}^{2p}\gamma_p(x_1,x_2)$ converges to
    \begin{equation*}
        \begin{dcases}
            \frac{2(N-1)}{Nl} \round{\sin \frac{\pi x_1}{2l} - \sin \frac{\pi x_2}{2l}}^2 \qquad &\text{if $j_1=j_2$},\\
            \frac{2}{Nl} \round{(N-1)\round{\sin^2\frac{\pi x_1}{2l} + \sin^2 \frac{\pi x_2}{2l}} + 2 \sin \frac{\pi x_1}{2l} \sin \frac{\pi x_2}{2l}}\qquad &\text{if $j_1\ne j_2$}.
        \end{dcases}
    \end{equation*}
    In particular, for $p\to+\infty$ and $N\to+\infty$, we have that $\round{\frac{\pi}{2l}}^{2p}\gamma_p(x_1,x_2)$ converges to
    \begin{equation*}
        \begin{dcases}
            \frac{2}{l} \round{\sin \frac{\pi x_1}{2l} - \sin \frac{\pi x_2}{2l}}^2 \qquad &\text{if $j_1=j_2$},\\
            \frac{2}{l} \round{\sin^2\frac{\pi x_1}{2l} + \sin^2 \frac{\pi x_2}{2l}}\qquad &\text{if $j_1\ne j_2$}.
        \end{dcases}
    \end{equation*}
\end{propositionrep}
\begin{proof}[Proof of Proposition \ref{prop:gamma_on_star_graph}]
    We are making use of Proposition \ref{prop:spectral_form_gamma}. Although it is in principle possible to obtain a spectral representation of $\gamma_p$ for finite $p$, we just derive the limit one for $p\to+\infty$. We follow \cite[Example 1.4.3]{BerkolaikoKuchment_IntroductionQuantumGraphs_2012} for the finding of the eigenvalues and eigenfunctions of the Laplace operator on the star graph.\par
    Let $\lambda=\omega^2$ be a generic eigenvalue. Then, on each edge $e$, we have that the associated eigenfunction writes
    \begin{equation*}
        \phi_e(x) = a_e \cos(\omega x) + b_e \sin(\omega x).
    \end{equation*}
    Now, imposing the Kirchhoff conditions at $0$ (the centre of the star graph) give $a_e = a$ for all $e\in E$ and $\sum_{e} b_e = 0$. In addition, the Kirchhoff conditions at the leaves give $b_e \cos(\omega l) = a \sin(\omega l)$. Now, if $\cos(\omega l)\ne 0$, this implies that 
    \begin{equation*}
        0=\sum_e b_e=\sum_e a \tan(\omega l) \iff \tan(\omega l) = 0 \iff \omega = \frac{k\pi}{l}, k\in \Z.
    \end{equation*}
    If, instead, $\cos(\omega l)=0$, that is $\omega l = \frac{\pi}{2} + k\pi$, $k\in \Z$, then we have $a=0$, which means that 
    \begin{equation*}
        \phi_e(x) = b_e\sin(\omega x), \qquad \omega = \frac{\pi+ 2lk\pi}{2l}, \qquad \sum_e b_e = 0.
    \end{equation*} 
    By comparing the two families of eigenvalues we found, we notice that the smallest non-zero eigenvalue comes from the latter and is $\lambda = \round{\frac{\pi}{2l}}^2$. The associated eigenfunction(s) are 
    \begin{equation*}
        \curly{\phi_{e,j}(x) = b_{e,j} \sin\round{\frac{\pi x}{2l}} \ : \ \sum_e b_{e,j} = 0}_{j=1}^N.
    \end{equation*}
    However, they are not linearly independent. Finding an orthonormal basis of this space is the same as finding an orthonormal basis of $\curly{\dVec b \in \R^N \ : \ \dVec 1^\top \dVec b = 0}$ with respect to the dot product inherited by $\angled{\cdot,\cdot}_{L^2}$. It turns out that a solution is given by:
    \begin{equation*}
        \phi_{e,j}(x) = \sqrt{\frac{2}{l j (j+1)}} \ u_{e,j} \ \sin\round{\frac{\pi x}{2l}}, \qquad u_{e,j} = \begin{dcases}
            1 \quad&\text{if $1\leq e \leq j$} \\
            -j \quad&\text{if $e=j+1$}\\
            0\quad&\text{if $e \geq j+2$},
        \end{dcases}
    \end{equation*}
    where $j\in\curly{1,...,N-1}$ and where the edges $e\in E$ have been arbitrarily labelled by $E = \curly{1,\dots, N}$. Now we need to compute the sum in (\ref{eq:spectral_form_gamma}) for the first $N-1$ eigenfunctions (as they are all associated to the smallest eigenvalue). For the sake of simplicity, we will write $A_i:=\sin\frac{\pi x_i}{2l}$, for $i\in\curly{1,2}$. Let us first consider the case in which $x_1$ and $x_2$ live on the same edge (say $e=1$):
    \begin{align*}
        \sum_{j=1}^{N-1} (\phi_j(x_1) - \phi_j(x_2))^2 &= \sum_{j=1}^{N-1} \frac{2}{lj(j+1)}\round{A_1 - A_2}^2\\
        &=\frac{2(N-1)}{Nl}\round{A_1 - A_2}^2.
    \end{align*}
    For $x_1,x_2$ living on different edges (say $x_1\in e_1 = 1$ and $x_2\in e_2 = 2$), we have:
    \begin{align*}
        &\sum_{j=1}^{N-1} (\phi_j(x_1) - \phi_j(x_2))^2 = \sum_{j=1}^{N-1} \frac{2}{lj(j+1)} \round{A_1 - u_{2,j} A_2}^2\\
        &=\frac{1}{l} \round{A_1 + A_2}^2 + \round{A_1-A_2}^2 \sum_{j=2}^{N-1} \frac{2}{lj(j+1)}\\
        &=\frac{1}{l} (A_1 + A_2)^2 + \frac{N-2}{Nl} (A_1 - A_2)^2 = \frac{2}{Nl}\round{(N-1)\round{A_1^2+A_2^2} + 2 A_1 A_2}.
    \end{align*}
    To conclude, it is sufficient to use (\ref{eq:spectral_form_gamma}) in concert with the fact that the only relevant eigenvalue in the limit $p\to+\infty$ is $\lambda=\round{\frac{\pi}{2l}}^2$. As a final remark, we stress that the choices of $e_1,e_2$ in the above cases have been made to simplify the calculations, but, given the generality of (\ref{eq:spectral_form_gamma}) and the symmetry of $G$, nothing would have changed by considering other edges or another orthonormal basis of the eigenspace associated to $\lambda=\round{\frac{\pi}{2l}}^2$.
\end{proof}

\begin{example}[Segment $G_1$]
    \label{ex:segment_graph}
    Here we explore the very first toy example: a graph consisting of just two nodes $\curly{v_1,v_2}$ connected by an edge $e$ of length $1$. We also consider the additional point $p=\round{v_1,v_2,\frac{1}{2}}$ and the generic point $x=\round{v_1,v_2,x}$, with $x\in [0,1]$. Being $G_1$ a segment, it is natural to consider it as the interval $[0,1]\subset \R$. We consider the variogram $\gamma_p$ and the distance $d_p$ for $p\in\curly{1,\dots,5}$ (see Figure \ref{fig:variogram_segment_graph}). Some comments are in order.
    \begin{itemize}
        \item For $p=1$, the variogram coincides with the resistance distance (see Proposition \ref{prop:m=0_resistance_Anderes}), which, on a tree-graph such as $G_1$, coincides with the Euclidean distance \cite[Proposition 4]{AnderesEtAl_IsotropicCovarianceFunctions_2020}.
        \item The distance $d_p$ increases as the shortest-path distance $d_{SP}$ increases. However, $d_p$ is not a function of $d_E$: this can be seen by noticing that the plots of $d_p(v_1,x)$ and $d_p\round{p, \frac{1}{2} + x}$ for $x\in\square{0,\frac 1 2}$ are different.
        \item The distance $d_p(v_1,v_2)$ converges quickly to the limit as $p$ grows.
    \end{itemize}
    
    \begin{figure}[ht]
        \centering
        \begin{subfigure}{0.49\textwidth}
            \includesvg[width=\linewidth]{Figures/Segment_Distance_id.svg}
        \end{subfigure}
        \hfill
        \begin{subfigure}{0.49\textwidth}
            \includesvg[width=\linewidth]{Figures/Segment_Distance_sqrt.svg}
        \end{subfigure}
        \caption{\small Behaviour of the (scaled) variogram $\pi^{2p}\gamma_p$ and of its square root $\pi^{p}d_p$ on the segment $G_1$ for several values of $p$. The plots show $\gamma_p(v_1, x)$ (solid) and $\gamma_p(p,x)$ (dashed), where $x\in[0,1]$ is an arbitrary point moving from $v_1$ to $v_2$.}
        \label{fig:variogram_segment_graph}
    \end{figure}
\end{example}

\begin{example}[Star $G_2$]
    \label{ex:star_graph}
    The second toy example that we consider is the simplest star graph: $1$ origin ($v_0$) and $3$ leaves ($v_1,v_2,v_3$), with all the edges of length $1$. We will also consider the point $p=\round{v_0,v_2,\frac{1}{2}}$ and a generic point $x$ on $(v_1,v_0) \cup (v_0,v_2)$. $x$ will be parametrised as $x\in [0,2]$, where $0,1,2$ correspond to $v_1,v_0,v_2$, respectively and for $x\in [0,1]$, $x=(v_1,v_0,x)$, while for $x \in [1,2]$, $x=(v_0,v_2,x-1)$. This machinery allows to visualise the (non-linear) set $(v_1,v_0) \cup (v_0,v_2)$ on the easy-to-plot $[0,2]\subset \R$ (see Figure \ref{fig:variogram_star_graph}).  
    \begin{figure}[ht]
        \centering
        \begin{subfigure}{0.49\textwidth}
            \includesvg[width=\linewidth]{Figures/Star_Distance_id.svg}
        \end{subfigure}
        \hfill
        \begin{subfigure}{0.49\textwidth}
            \includesvg[width=\linewidth]{Figures/Star_Distance_sqrt.svg}
        \end{subfigure}
        \caption{\small Behaviour of the (scaled) variogram $\round{\frac{\pi}{2}}^{2p}\gamma_p$ and of its square root $\round{\frac{\pi}{2}}^{p}d_p$ on the star graph $G_2$ for several values of $p$. The plots show $\gamma_p(v_1, x)$ (solid) and $\gamma_p(p,x)$ (dashed), where $x\in[0,2]$ is an arbitrary point moving from $v_1$ to $v_0$ to $v_2$.}
        \label{fig:variogram_star_graph}
    \end{figure}
    Also in this case, we stress some significant facts.
    \begin{itemize}
        \item Again, for $p=1$, $\gamma_p$ coincides with the shortest-path distance.
        \item For $p\geq 2$, $\gamma_p$ presents a derivative jump at the origin $1=v_0$. Albeit perhaps strange, this is an expected behaviour that comes from the Kirchhoff condition of order $1$ and is a direct consequence of Proposition \ref{prop:gamma_belongs_to_C^m}.
    \end{itemize}
\end{example}

\begin{example}[Circle $G_3$]
    \label{ex:circle_graph}
    The third toy example that we consider is the circle: we model it by considering $3$ vertices $v_1,v_2,v_3$ connected by all the $3$ possible edges, all of length $1/3$. This is necessary as, to build the Laplacian matrix $\matr L$, the graph should not have self loops or multiple edges. We consider a generic point $x\in G_3$, parametrised by the arc-length distance $x\in[0,1]$, where $0,1/3,2/3,1$ correspond to $v_1,v_2,v_3,v_1$, respectively. As in the previous examples, we plot the variogram $\gamma_p(v_1,x)$ and the distance $d_p$ for some values of $p$ (see Figure \ref{fig:variogram_circle_graph}).
    \begin{figure}[ht]
        \centering
        \begin{subfigure}{0.49\textwidth}
            \includesvg[width=\linewidth]{Figures/Circle_Distance_id.svg}
        \end{subfigure}
        \hfill
        \begin{subfigure}{0.49\textwidth}
            \includesvg[width=\linewidth]{Figures/Circle_Distance_sqrt.svg}
        \end{subfigure}
        \caption{\small Behaviour of the (scaled) variogram $\round{2\pi}^{2p}\gamma_p$ and of its square root $\round{2\pi}^{p}d_p$ on the circle graph $G_3$ for several values of $p$. The plots show the variogram $\gamma_p(v_1, x)$ (solid) and $\gamma_p(p,x)$ (dashed), with the relative distances $d_p$, where $x\in[0,1]$ is an arbitrary point moving from $v_1$ to $v_2$ to $v_3$ to $v_1$.}
        \label{fig:variogram_circle_graph}
    \end{figure}
    Some comments are in order.
    \begin{itemize}
        \item Clearly, for $p=1$, the distance does not coincides with the shortest-path one, as the graph is no longer a tree.
        \item The distance $d_p$ is symmetric around $1/2$, reflecting the symmetry of the graph, confirming that, in this specific case, $d_p$ is a function of the shortest-path distance.
        \item Another consequence to the symmetry is that $\gamma_p$ is flat at $1/2$.
    \end{itemize}
\end{example}

\section{Discussion and conclusions}
\label{sec:conclusion}
    We have defined the class of polyharmonic distances $\curly{d_p}_{p\in\N^+}$ and have shown how the isotropic function $x_1,x_2 \mapsto \psi(d_p^2(x_1,x_2))$ is a valid covariance on $G\times G$ for any completely monotonic function $\psi$ continuous at $0$. We have shown that $d_p$ is an intrinsic property of the metric graph and, in particular, does not depend on addition or removal of degree-2 vertices. Furthermore, we have shown that $d_1^2$ is the effective resistance distance defined by \cite{AnderesEtAl_IsotropicCovarianceFunctions_2020} and that $d_2$ is the biharmonic distance on metric graphs. The polyharmonic distances change the geometry of metric graphs: we have not merely added another radial profile to be composed with the effective resistance distance to obtain valid covariance functions.\par 
    We have shown how these distances can be interpreted in several equivalent ways. First, $d_p$ is the maximum increment that can be generated by an admissible function on the metric graph having unit $p$-energy norm. In addition, the spectral representation provides a clean relation between $p$ and $d_p$: the contribution to the squared distance $d_p^2$ given by the squared increments of the Laplacian eigenfunction are weighted by the inverse powers of the eigenvalues, namely by $\lambda_k^{-p}$. Finally, we have shown an explicit stochastic construction that, for each $p\geq 1$, provides a Gaussian process whose variogram is $d_p^2$. This may be useful in the computation of the variogram, especially for low values of $p$. \par 
    We have proved that the parameter $p$ influences both the geometry induced by $d_p$ and the mean-square differentiability of the process $Y$ having covariance $\psi(d_p^2(x_1,x_2))$. Indeed, if $\abs{\psi^{(p-1)}}$ is finite at $0^+$ and if $p\geq 2$, then $Y \in C_1^{p-1}(G)$. In addition, we have shown that the higher-order Kirchhoff conditions of $Y$ are, in general, not inherited by the ones of the auxiliary process $Z$. We have also discussed how the (scaled) limit $d_\infty$ may lose the separation property.\par 
    We have proved that the auxiliary process $Z_m$ coincides with the limit (in finite-dimensional distribution) of the vertex-centred Whittle-Mat\`ern field for $\kappa \to 0^+$. This establishes a connection between the covariance-first and SDE-based approaches. The limit relation concerns the process $Z_m$ and not the isotropic process $Y$. \par 
    The construction suggest several directions for future research. First, the following statistical-inference aspects should be explored: for a given metric graph and data, how to choose the distance parameter $p$ and how to estimate the completely monotonic function $\psi$? Furthermore, computational aspects, such as computing or approximating distances for large metric graphs, or simulating the smooth process $Y$, should be assessed. In addition, several extensions of this work could be explored. For instance, will the distance $d_p$ be well-defined for non-integer $p$'s? In such a case, what are its properties? Are there alternative conditions on the derivatives of the auxiliary process $Z$ at the vertices and on the profile $\psi$ that guarantee higher-order Kirchhoff conditions of $Y$? We leave all these questions for future work.

\newpage
\bibliography{bib}

@article{AnderesEtAl_IsotropicCovarianceFunctions_2020,
  title = {Isotropic Covariance Functions on Graphs and Their Edges},
  author = {Anderes, Ethan and M{\o}ller, Jesper and Rasmussen, Jakob G.},
  year = 2020,
  month = aug,
  journal = {The Annals of Statistics},
  volume = {48},
  number = {4},
  pages = {2478--2503},
  publisher = {Institute of Mathematical Statistics},
  issn = {0090-5364, 2168-8966},
  doi = {10.1214/19-AOS1896},
  urldate = {2025-03-11}
}

@article{Lachal_ClassBridgesIterated_2011,
  title = {A {{Class}} of {{Bridges}} of {{Iterated Integrals}} of {{Brownian Motion Related}} to {{Various Boundary Value Problems Involving}} the {{One-Dimensional Polyharmonic Operator}}},
  author = {Lachal, Aim{\'e}},
  year = 2011,
  journal = {International Journal of Stochastic Analysis},
  volume = {2011},
  number = {1},
  pages = {762486},
  issn = {2090-3340},
  doi = {10.1155/2011/762486},
  urldate = {2026-05-07},
  copyright = {Copyright \copyright{} 2011 Aim\'e Lachal.},
  langid = {english}
}

@incollection{vanZantenvanderVaart_ReproducingKernelHilbert_2008,
  title = {Reproducing Kernel {{Hilbert}} Spaces of {{Gaussian}} Priors},
  booktitle = {Pushing the {{Limits}} of {{Contemporary Statistics}}: {{Contributions}} in {{Honor}} of {{Jayanta K}}. {{Ghosh}}},
  author = {{van Zanten}, J. H. and {van der Vaart}, A. W.},
  year = 2008,
  month = jan,
  volume = {3},
  pages = {200--223},
  publisher = {Institute of Mathematical Statistics},
  doi = {10.1214/074921708000000156},
  urldate = {2026-06-24},
  langid = {english}
}

@article{SottinenYazigi_GeneralizedGaussianBridges_2014,
  title = {Generalized {{Gaussian}} Bridges},
  author = {Sottinen, Tommi and Yazigi, Adil},
  year = {2014},
  journal = {Stochastic Processes and their Applications},
  volume = {124},
  number = {9},
  pages = {3084--3105},
  issn = {0304-4149},
  doi = {10.1016/j.spa.2014.04.002}
}

@article{GhoshEtAl_MinimizingEffectiveResistance_2008,
  title = {Minimizing {{Effective Resistance}} of a {{Graph}}},
  author = {Ghosh, Arpita and Boyd, Stephen and Saberi, Amin},
  year = 2008,
  month = jan,
  journal = {SIAM Review},
  volume = {50},
  number = {1},
  pages = {37--66},
  publisher = {{Society for Industrial and Applied Mathematics}},
  issn = {0036-1445},
  doi = {10.1137/050645452}
}

@article{FilosiEtAl_TemporallyEvolvingGeneralisedNetworks_2024,
  title = {Temporally-{{Evolving Generalised Networks}} and {{Their Kernels}}},
  author = {Filosi, Tobia and Agostinelli, Claudio and Porcu, Emilio},
  year = 2024,
  journal = {Statistica Sinica},
  volume = {38},
  number = {3},
  doi = {10.5705/ss.202024.0345},
  langid = {english}
}

@article{JorgensenPearse_HilbertSpaceApproach_2010,
  title = {A {{Hilbert Space Approach}} to {{Effective Resistance Metric}}},
  author = {Jorgensen, Palle E. T. and Pearse, Erin Peter James},
  year = 2010,
  month = nov,
  journal = {Complex Analysis and Operator Theory},
  volume = {4},
  number = {4},
  pages = {975--1013},
  issn = {1661-8262},
  doi = {10.1007/s11785-009-0041-1},
  langid = {english}
}

@article{BerkolaikoLiu_SimplicityEigenvaluesNonvanishing_2017,
  title = {Simplicity of Eigenvalues and Non-Vanishing of Eigenfunctions of a Quantum Graph},
  author = {Berkolaiko, Gregory and Liu, Wen},
  year = 2017,
  month = jan,
  journal = {Journal of Mathematical Analysis and Applications},
  volume = {445},
  number = {1},
  pages = {803--818},
  issn = {0022-247X},
  doi = {10.1016/j.jmaa.2016.07.026}
}

@book{BerkolaikoKuchment_IntroductionQuantumGraphs_2012,
  title = {Introduction to {{Quantum Graphs}}},
  author = {Berkolaiko, Gregory and Kuchment, Peter},
  year = 2012,
  month = dec,
  series = {Mathematical {{Surveys}} and {{Monographs}}},
  volume = {186},
  publisher = {American Mathematical Society},
  address = {Providence, Rhode Island},
  doi = {10.1090/surv/186},
  isbn = {978-0-8218-9211-4 978-0-8218-9455-2},
  langid = {english}
}

@article{BolinEtAl_GaussianWhittleMatern_2024,
  title = {Gaussian {{Whittle}}--{{Mat\'ern}} Fields on Metric Graphs},
  author = {Bolin, David and Simas, Alexandre B. and Wallin, Jonas},
  year = 2024,
  month = may,
  journal = {Bernoulli},
  volume = {30},
  number = {2},
  pages = {1611--1639},
  publisher = {{Bernoulli Society for Mathematical Statistics and Probability}},
  issn = {1350-7265},
  doi = {10.3150/23-BEJ1647}
}

@misc{Mugnolo_WhatActuallyMetric_2021,
  title = {What Is Actually a Metric Graph?},
  author = {Mugnolo, Delio},
  year = 2021,
  month = mar,
  number = {arXiv:1912.07549},
  eprint = {1912.07549},
  primaryclass = {math},
  publisher = {arXiv},
  doi = {10.48550/arXiv.1912.07549},
  archiveprefix = {arXiv}
}

@article{Schoenberg_MetricSpacesPositive_1938,
  title = {Metric Spaces and Positive Definite Functions},
  author = {Schoenberg, I. J.},
  year = 1938,
  journal = {Transactions of the American Mathematical Society},
  volume = {44},
  number = {3},
  pages = {522--536},
  issn = {1088-6850, 0002-9947},
  doi = {10.1090/S0002-9947-1938-1501980-0},
  langid = {english}
}

@misc{BlackEtAl_BiharmonicDistanceGraphs_2025,
  title = {Biharmonic {{Distance}} of {{Graphs}} and Its {{Higher-Order Variants}}: {{Theoretical Properties}} with {{Applications}} to {{Centrality}} and {{Clustering}}},
  shorttitle = {Biharmonic {{Distance}} of {{Graphs}} and Its {{Higher-Order Variants}}},
  author = {Black, Mitchell and Lin, Lucy and Nayyeri, Amir and Wong, Weng-Keen},
  year = 2025,
  month = feb,
  number = {arXiv:2406.07574},
  eprint = {2406.07574},
  primaryclass = {cs.SI},
  publisher = {arXiv},
  doi = {10.48550/arXiv.2406.07574},
  archiveprefix = {arXiv}
}

@article{LipmanEtAl_BiharmonicDistance_2010,
  title = {Biharmonic Distance},
  author = {Lipman, Yaron and Rustamov, Raif M. and Funkhouser, Thomas A.},
  year = 2010,
  month = jun,
  journal = {ACM Transactions on Graphics},
  volume = {29},
  number = {3},
  pages = {1--11},
  issn = {0730-0301, 1557-7368},
  doi = {10.1145/1805964.1805971},
  langid = {english}
}

@article{BaddeleyEtAl_AnalysingPointPatterns_2021,
  title = {Analysing Point Patterns on Networks --- {{A}} Review},
  author = {Baddeley, Adrian and Nair, Gopalan and Rakshit, Suman and McSwiggan, Greg and Davies, Tilman M.},
  year = 2021,
  month = apr,
  journal = {Spatial Statistics},
  series = {Towards {{Spatial Data Science}}},
  volume = {42},
  pages = {100435},
  issn = {2211-6753},
  doi = {10.1016/j.spasta.2020.100435}
}

@book{OkabeSugihara_SpatialAnalysisNetworks_2012,
  title = {Spatial {{Analysis Along Networks}}: {{Statistical}} and {{Computational Methods}}},
  shorttitle = {Spatial {{Analysis Along Networks}}},
  author = {Okabe, Atsuyuki and Sugihara, Kokichi},
  year = 2012,
  month = aug,
  publisher = {John Wiley \& Sons},
  googlebooks = {48GRqj51\_W8C},
  isbn = {978-0-470-77081-8},
  langid = {english}
}

@article{KleinRandic_ResistanceDistance_1993,
  title = {Resistance Distance},
  author = {Klein, D. J. and Randi{\'c}, M.},
  year = 1993,
  month = dec,
  journal = {Journal of Mathematical Chemistry},
  volume = {12},
  number = {1},
  pages = {81--95},
  issn = {0259-9791, 1572-8897},
  doi = {10.1007/BF01164627},
  copyright = {http://www.springer.com/tdm},
  langid = {english}
}

@article{CressieEtAl_SpatialPredictionRiver_2006a,
  title = {Spatial Prediction on a River Network},
  author = {Cressie, Noel and Frey, Jesse and Harch, Bronwyn and Smith, Mick},
  year = 2006,
  month = jun,
  journal = {Journal of Agricultural, Biological, and Environmental Statistics},
  volume = {11},
  number = {2},
  pages = {127},
  issn = {1537-2693},
  doi = {10.1198/108571106X110649},
  langid = {english}
}

@article{PataneSpagnuolo_InteractiveAnalysisHarmonic_2013,
  title = {An Interactive Analysis of Harmonic and Diffusion Equations on Discrete {{3D}} Shapes},
  author = {Patan{\'e}, Giuseppe and Spagnuolo, Michela},
  year = 2013,
  month = aug,
  journal = {Computers \& Graphics},
  volume = {37},
  number = {5},
  pages = {526--538},
  issn = {0097-8493},
  doi = {10.1016/j.cag.2013.03.006}
}

@misc{BolinEtAl_NewClassNonstationary_2026,
  title = {A New Class of Non-Stationary {{Gaussian}} Fields with General Smoothness on Metric Graphs},
  author = {Bolin, David and {Riera-Segura}, Lenin and Simas, Alexandre B.},
  year = 2026,
  month = mar,
  number = {arXiv:2501.11738},
  eprint = {2501.11738},
  primaryclass = {stat.ME},
  publisher = {arXiv},
  doi = {10.48550/arXiv.2501.11738},
  archiveprefix = {arXiv}
}

@book{Adler_GeometryRandomFields_2010,
  title = {The {{Geometry}} of {{Random Fields}}},
  author = {Adler, Robert J},
  year = 2010,
  month = jan,
  series = {Classics in {{Applied Mathematics}}},
  publisher = {{Society for Industrial and Applied Mathematics}},
  doi = {10.1137/1.9780898718980},
  isbn = {978-0-89871-693-1}
}

\newpage

\end{document}